\theoremstyle{plain}
\newtheorem{theorem}{Theorem}[section]
\newtheorem{claim}{Claim}[section]
\newtheorem{corollary}[theorem]{Corollary}
\newtheorem{definition}[theorem]{Definition}
\newtheorem{lemma}[theorem]{Lemma}
\newtheorem{notation}[theorem]{Notation}
\newtheorem{proposition}[theorem]{Proposition}
\newtheorem{remark}[theorem]{Remark}
\newtheorem{theorem-definition}[theorem]{Theorem and Definition}
\numberwithin{equation}{section}
\begin{document}

\title{Some Banach-Lie structures}

\date{}

\title{ An integrability criterion for a projective limit of Banach distributions}
\author{Fernand Pelletier}

\address{Unit\'e Mixte de Recherche 5127 CNRS, Universit\'e  de Savoie Mont Blanc, Laboratoire de Math\'ematiques (LAMA),Campus Scientifique,  73370 Le Bourget-du-Lac, France}
\email{fernand.pelletier@univ-smb.fr}

\date{}

\begin{abstract}
 We give an integrability criterion  for  a projective limit of Banach distributions on a Fr\'echet manifold which is a projective limit of Banach manifolds.  This leads to a result of integrability of projective limit of involutive bundles on  a projective sequence of Banach manifolds. This   can be seen as a version of Frobenius Theorem in Fr\'echet  setting.   As consequence,    we obtain a  version of the third Lie theorem    for a Fr\'echet-Lie group which is a   submersive projective limit of Banach Lie groups.  We also give an application to a sequence of prolongations of a Banach Lie algebroid.
 \end{abstract}
 \maketitle
 
 \section{introduction}
In classical differential geometry, a \textit{distribution} on a smooth
manifold $M$, is an assignment
$\Delta :x\mapsto\Delta_{x}\subset T_{x}M$
on $M$, where $\Delta_{x}$ is a subspace of $T_{x}M$. This distribution
is \textit{integrable} if, for any $x\in M$, there exists an immersed
submanifold $f:L\rightarrow M$ such that $x\in f(L)$ and for any $z\in L$, we
have $Tf(T_{z}L)=\Delta_{f(z)}$. On the other hand, $\Delta$ is
called \textit{involutive} if, for any vector fields $X$ and $Y$ on $M$
tangent to $\Delta$, their Lie bracket $[X,Y]$ is also tangent to
$\Delta$.

On a finite dimensional manifold, when $\Delta$ is a subbundle of $TM$, the
classical Frobenius Theorem gives an equivalence between integrability and
involutivity. In the other cases, the distribution is \textit{singular} and,
even under assumptions of smoothness on $\Delta$, in general, the
involutivity is not a sufficient condition for integrability (one needs some
more additional local conditions). These problems were clarified and resolved
essentially in \cite{Sus} and \cite{Ste}.

In the context of Banach manifolds, the Frobenius Theorem is again true for
distributions which are complemented subbundles in the tangent bundle (cf. \cite{Lan}). For
singular Banach distributions closed and complemented (i.e. $\Delta_{x}$
is a complemented Banach subspace of $T_{x}M$) we also have the integrability
property under some natural geometrical conditions (see \cite{ChSt} for
instance). In a more general way, weak Banach distributions $\Delta$
(i.e. $\Delta_{x}$ which is a Banach subspace   of  $T_{x}M$ not necessary complemented), the integrability property is again true under some additional geometrical assumptions (see \cite{Pe} or \cite{CaPe} for more details).\\

 The proof of this last  results is essentially  based on the existence of the flow of a  local vector field. In a more general infinite dimensional context as  distributions on convenient manifolds  or on locally convex manifolds, in general the local  flow for a vector field does not exist. Analog results exists in such previous settings: \cite{GlLo},  \cite{Hil}, \cite{Eyn}, \cite{Tei} \cite{CaPe1} for instance. But essentially,  all these integrability criteria  are proved under  strong assumptions which,  either  implies    the existence of a  family of  vector fields which are tangent and generate  locally a distribution and each one of these vector fields have a local flow,  or  implies   the  existence of an implicit function theorem in such a setting.\\
 
 The purpose of this paper is to give an integrability criterion for projective limit of Banach distributions on  a Fr\'echet manifold which is a projective limit of Banach manifolds. The precise assumptions on the distribution  are presented  in  assumptions (*) in Definition \ref{D_SubmersiveProjectiveLimitOfLocalBanachBundles}) and this criterion  is formulated  in a local way in  Theorem \ref {T_IntegrabilitySPLBanachBundles} and in a global way in Theorem \ref{T_WeakDistribution}. These  results are obtained under conditions  which permits to used a Theorem of existence and unicity of solution of ODE in a Fr\'echet space proved in \cite{Lob}, and   which can be reformulated   in the context of projective limit of Banach spaces (cf. Appendix \ref{ExistenceODE}). Using  such a Theorem, this  proof  needs, in the one hand, an adaptation of some arguments used in the proof  of Theorem 1 in \cite{Pe} for closed distributions on Banach manifolds, and on the other hand, some properties of the Banach Lie  group of "uniformly bounded" automorphisms of a Fr\'echet space (cf.  Appendix \ref{__TheFrechetSpaceHF1F2-TheBanachSpaceHbF1F2}). As application, this criterion permits to obtain a kind of {\it  projective limit of "Banach Frobenius theorem" } for submersive  projective limit of involutive bundles on  a submersive  projective sequence of Banach manifolds (cf. Theorem \ref{T_Frobenius}).  
 By the way,  as consequence, a submersive  projective limit of complemented Banach  Lie subalgebras of  a submersive projective limit of Banach Lie group algebras is the Lie algebra of a F\'echet Lie group (cf. Theorem \ref{T_IntegrableFrechet Group}).  This result can be considered as {\it a version of the third Lie Theorem for a Fr\'echet-Lie group } which is a   submersive projective limit of Banach Lie groups. We also give an application to a sequence of prolongations of a Banach Lie algebroid (cf. Theorem \ref{T_SequenceProlongation}). We complete  these results, by  an example of  integrable Fr\'echet distribution which is a projective limit of non integrable distributions but  which satisfies the  assumptions (*).\\

 This paper is organized as follows. The first paragraph of the next section  describes the context and   the assumptions of this criterion of integrability  used in Theorem \ref{T_IntegrabilitySPLBanachBundles} and Theorem \ref{T_WeakDistribution}. In order  to present  these  Theorems  in a  more accessible way  for  a quick
 reading, the useful  definitions and results take place  in Appendix \ref {_ProjectiveLimits}, and   we formulate the assumptions   with precise references to this Appendix. The  Theorem \ref{T_WeakDistribution} permits to show that, under some natural conditions,  the projective limit  $E$ of a  submersive projective sequence of involutive subbundles  $ E_i$ of the tangent bundle $TM_i$ of a submersive projective sequence of Banach manifold $M_i$, is a Fr\'echet involutive and integrable  subbundle of the tangent bundle  $TM$  of the Fr\'echet manifold $M=\underleftarrow{\lim}M_i$. \\  A first application of these results, is the existence of a F\'echet Lie group whose Lie algebra is    a submersive projective limit of  complemented Banach  Lie subalgebra of the  a submersive  projective limit of Banach Lie groups Lie algebras  (cf. Theorem \ref{T_IntegrableFrechet Group} in $\S$ \ref{Application1}). In $\S$ \ref{application2},  we give an application of Theorem \ref{T_IntegrabilitySPLBanachBundles} to a sequence of 
 prolongations of a Banach-Lie algebroid (see \cite{CaPe2}) and we end this paragraph by the  announced contre-exemple of Theorem \ref{T_IntegrabilitySPLBanachBundles} and Theorem \ref{T_WeakDistribution}. \\
 The proof of the basic  Theorem    \ref{T_IntegrabilitySPLBanachBundles}
 is located in $\S$\ref{ProofIntegrabilitySPLBanachBundles}. \\
 All  properties  concerning the set    of uniformly bounded endomorphisms of a 
 Fr\'echet space are developed in Appendix \ref{__TheFrechetSpaceHF1F2-TheBanachSpaceHbF1F2}.  Also in a series of other Appendices,  we expose all the definitions and   results needed in  the statements of Theorems  and in the proof of Theorem \ref{T_IntegrabilitySPLBanachBundles}.

\section{An integrability  criterion  for a submersive projective limit of Banach distributions}
\label{_ACriterionOfIntegrabilityForProjectiveLimitOfBanachDistributions}
\subsection{The criterion and its corollaries}\label{__TheCriterion}

\emph{The context needed in this section is detailed in Appendix \ref{_ProjectiveLimits}.}\\
Let $\left(M_i,\delta_i^j\right)_{j\geq i}$ be a sequence of projective Banach manifolds with projective limit $M= \underleftarrow{\lim}M_i$ \footnote{ cf. section \ref{__ProjectiveLimitsOfBanachManifolds}}.
In order to give a criterion of integrability for projective limits of local Banach bundles on $M$ under some additional assumptions, we need  to introduce some notations.\\
Let $\nu$ (resp.  $\mu$) be a norm on  a Banach space $\mathbb{E}$ (resp. $\mathbb{M}$). We denote by $\|\;\|^{\operatorname{op}}$  the associated norm on the linear space of continuous linear mappings $\mathcal{L}(\mathbb{E},\mathbb{M})$.\\
Then we have: 
\begin{definition}
\label{D_SubmersiveProjectiveLimitOfLocalBanachBundles}
Let $\left( M_i,\delta_i^j \right)_{j\geq i}$ be a projective sequence of Banach manifolds where the maps $\delta_i^j$ are submersions and  $M= \underleftarrow{\lim}M_i$ 
 its projective limit.\\
A  closed distribution\index{distribution} $\Delta$ on $M$ will be called a
submersive projective limit\index{projective limit!submersive} of local anchored bundles if the following property is satisfied:
\begin{itemize}
\item[(*)]
For any $x= \underleftarrow{\lim}x_i\in M$, there exists an open neighbourhood $\;U=\underleftarrow{\lim}U_i$ of $x$, a submersive  projective sequence of anchored Banach bundles $(E_{i},\pi_{i},U_{i},\rho_{i})$  \footnote{ { \bf  see:} Definition \ref{D_StrongProjectiveLimitOfBanachBundle} and Notations \ref{N_ProjectiveSequenceBundles} for a submersive sequence of projective Banach bundles} \footnote{ { \bf  see:}  Definition \ref{_BanachLieAlgebroid} for an anchored Banach bundle}
\footnote{  \; a sequence of projective anchored bundle $(E_{i},\pi_{i},M_{i},\rho_{i})$ is a projective sequence of Banach bundles which satisfies assumption \textbf{(PSBLA 2)} in Definition \ref{D_ProjectiveSequenceofBanachLieAlgebroids}}
 fulfilling the following properties
   for any $z= \underleftarrow{\lim}z_i \in U$:
 \begin{enumerate}
\item[1.] $ \underleftarrow{\lim}\rho_{i}((E_{i})_{z_i})=\Delta_z$,   for any $z\in U$.  
\item[2.] The kernel of $(\rho_i)_{z_i}$ is complemented  in $(E_i)_{z_i}$ and the range of $(\rho_i)_{z_i}$ is closed, for  all $i\in \mathbb{N}$.
\item[3.] There exists a constant $C>0$ and a  Finsler norm $||\;||^{{E}_i}$ (resp. $||\;||^{{M}_i}$) on $(E_i)_{| U_i}$ (resp. ${TM_i}_{| U_i}$) such that:
		\[
		\forall i \in \mathbb{N}, \; ||(\rho_i)_{z_i}||_i^{\operatorname{op}}\leq C,\; \forall z_i\in U_i.\footnote{\; More precisely,  $||(\rho_i)_{z_i}||_i^{\operatorname{op}}=\sup\{||(\rho_i)_{z_i}(u)||^{M_i},\; ||u||^{E_i}\leq 1\}$}
		\]
	\end{enumerate}
\end{itemize}
\end{definition}
We  have the following criterion of integrability:
\begin{theorem}
\label{T_IntegrabilitySPLBanachBundles}
Let $M$ be a projective limit of a submersive projective sequence  $(M_i, \delta_i^j)_{j\geq i}$ of Banach manifolds and  $\Delta$ be a local  projective
limit of local  Banach bundles on $M$. Assume that, under the property (*), there
exists a Lie bracket $[.,.]_{i}$ on $(E_{i},\pi_{i},U_{i},\rho_{i})$
such that $(E_{i},\pi_{i},U_{i},\rho_{i},[.,.]_{i})$ is a submersive  projective sequence of Banach-Lie algebroids \footnote{\;cf. Definition \ref{D_ProjectiveSequenceofBanachLieAlgebroids}}.\\
Then the distribution $\Delta$ is integrable and the maximal integral manifold $N$ through $x=\underleftarrow{\lim}x_{i}$ is a closed Fr\'echet submanifold of $M$ which is a  submersive projective limit of the set of maximal leaves $N_{i}$ of $\rho_{i}(E_{i})$ through $x_{i}$ in $M_{i}$.
\end{theorem}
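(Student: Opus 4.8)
The plan is to build the integral manifold stage by stage, invoking at level $i$ the Banach integrability theorem (Theorem~1 of \cite{Pe}), and then to pass to the projective limit with the help of the Fr\'echet existence--uniqueness theorem for ODEs of Appendix~\ref{ExistenceODE} and of the Banach--Lie group of uniformly bounded automorphisms of Appendix~\ref{__TheFrechetSpaceHF1F2-TheBanachSpaceHbF1F2}. Fix $x=\underleftarrow{\lim}x_i$, and let $U=\underleftarrow{\lim}U_i$ and the submersive projective sequence of Banach--Lie algebroids $(E_i,\pi_i,U_i,\rho_i,[.,.]_i)$ be as in (*). For each $i$, assumption~2 says that $\ker(\rho_i)_{z_i}$ is complemented in $(E_i)_{z_i}$ and that the range of $(\rho_i)_{z_i}$ is closed, so $(E_i,\pi_i,U_i,\rho_i,[.,.]_i)$ is a Banach--Lie algebroid whose anchor image $\Delta_i:=\rho_i(E_i)$ is a closed (weak) distribution on $U_i$; by assumption~1, $\Delta_z=\underleftarrow{\lim}(\Delta_i)_{z_i}$, so $\Delta=\underleftarrow{\lim}\Delta_i$ near $x$. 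Theorem~1 of \cite{Pe} then gives the integrability of $\Delta_i$ and, after the standard gluing of overlapping plaques, a maximal integral manifold $N_i$ through $x_i$, a weak Banach submanifold of $M_i$. I would keep track of the construction in \cite{Pe}: locally one splits $E_i=\ker\rho_i\oplus S_i$ with $\rho_i|_{S_i}$ a topological isomorphism onto $\Delta_i$, chooses a frame of sections of $S_i$, pushes it forward by $\rho_i$ to local vector fields spanning $\Delta_i$, and uses the flows of these fields --- whose commutation is controlled by the bracket $[.,.]_i$ --- to produce a chart $\Phi_i$ of $U_i$ straightening $\Delta_i$ into a product factor.

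Next I would check that these data are compatible along the tower. Since the sequence of anchored bundles satisfies \textbf{(PSBLA 2)}, the connecting bundle morphisms $\lambda_i^j\colon E_j\to E_i$ cover $\delta_i^j$ and intertwine the anchors,
\[
T\delta_i^j\circ\rho_j=\rho_i\circ\lambda_i^j ,
\]
hence $T\delta_i^j(\Delta_j)\subseteq\Delta_i$, so $\delta_i^j(N_j)\subseteq N_i$ and, using the submersivity of $\delta_i^j$ and of the sequence of algebroids, $\delta_i^j|_{N_j}\colon N_j\to N_i$ is a submersion. Thus $(N_i,\delta_i^j|_{N_j})_{j\geq i}$ is itself a submersive projective sequence of Banach manifolds. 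The submersivity of the sequence of algebroids is exactly what allows the kernels, the complements $S_i$ and the frames to be chosen $\delta$-related, so that the charts $\Phi_i$ are compatible: in these charts $\delta_i^j$ is the product of the base bonding map with a linear split surjection, and $\Phi:=\underleftarrow{\lim}\Phi_i$ is at least a well-defined continuous map on a neighbourhood of $x$ in $M$.

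The heart of the argument, and the step I expect to be the main obstacle, is to upgrade $\Phi$ to a genuine chart of $M$ at $x$. Each $\Phi_i$ is a finite composition of time-one flows of the vector fields $\rho_i(X_i^\alpha)$; assumption~3, namely $\|(\rho_i)_{z_i}\|_i^{\operatorname{op}}\leq C$ for all $i$ and all $z_i\in U_i$, combined with the structural bounds of the Banach--Lie algebroids, yields estimates that are uniform in $i$ for this family, so that the associated projective limit vector field on $U$ satisfies the hypotheses of the existence--uniqueness theorem of Appendix~\ref{ExistenceODE} (the Fr\'echet-space ODE theorem reformulated from \cite{Lob}). Its flow therefore exists and is unique, and is the projective limit of the Banach flows $\varphi_i^t$; the same uniform estimates show that each time-$t$ map lies in the Banach--Lie group of uniformly bounded automorphisms of Appendix~\ref{__TheFrechetSpaceHF1F2-TheBanachSpaceHbF1F2}, which is precisely what makes $\Phi$ a Fr\'echet diffeomorphism onto its image rather than a mere homeomorphism. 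This is the point that fails for a general Fr\'echet distribution and where the bulk of the technical work lies. Granting it, $\Phi$ straightens $\Delta$, so $\Delta$ is integrable and, near $x$, its leaf is a slice of the form $\{\text{second factor}=\mathrm{const}\}$, hence a closed Fr\'echet submanifold.

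Finally, the usual plaque-gluing argument --- two points lie on the same leaf iff they are joined by a piecewise $\Delta$-horizontal path, and such paths are concatenations of the flow curves above, which now exist by Appendix~\ref{ExistenceODE} --- shows that the local slices patch into a maximal integral manifold $N$ of $\Delta$ through $x$, independent of the chosen local data. Compatibility of the charts $\Phi_i$ with the $\delta_i^j$ together with maximality of each $N_i$ identifies $N$ with $\underleftarrow{\lim}N_i$, and the projective system $(N_i,\delta_i^j|_{N_j})_{j\geq i}$ being submersive by the previous step, $N$ is the claimed closed Fr\'echet submanifold of $M$ which is a submersive projective limit of the maximal leaves $N_i$.
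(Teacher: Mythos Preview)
Your outline has the right overall shape (work at each Banach level, pass to the projective limit, use the ODE theorem and the uniformly bounded operators of Appendix~\ref{__TheFrechetSpaceHF1F2-TheBanachSpaceHbF1F2}), but the description of $\Phi$ and of the central technical step is off in ways that matter. First, $\Phi_i$ is \emph{not} a chart of $U_i$ ``straightening $\Delta_i$ into a product factor'': nothing in (*) says $\Delta_i$ is complemented in $TM_i$, only that it is closed and that $\ker\rho_i$ is complemented in $E_i$. Accordingly the leaf is only a weak (closed) submanifold, and there is no ``second factor $=$ const'' slice. What the paper actually builds is a \emph{leaf parametrisation}: after splitting $\mathbb{E}_i=\ker r_i\oplus\mathbb{F}'_i$ coherently in $i$ (Lemma~\ref{L_kerrhox}), one sets $X_u=\rho'(u)$ for $u\in\mathbb{F}'$ and defines $\Phi(u)=\operatorname{Fl}^{X_u}_1(0)$, a \emph{single} time-one flow parameterised by the infinite-dimensional model $\mathbb{F}'$ of the leaf --- not a finite composition of flows of a frame $\rho_i(X_i^\alpha)$, which makes no sense here since $\mathbb{F}'_i$ is infinite-dimensional. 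Your picture of ``the time-$t$ map lying in the Banach--Lie group of uniformly bounded automorphisms'' is also misplaced: the flows are nonlinear diffeomorphisms of $M$, and what Appendix~\ref{__TheFrechetSpaceHF1F2-TheBanachSpaceHbF1F2} is used for is the \emph{differential} $T_u\Phi$, as an element of $\mathcal{H}_b(\mathbb{F}',\mathbb{M})$.

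The genuine gap is the step you label ``the bulk of the technical work'' without saying what it is. In the Fr\'echet setting there is no inverse/implicit function theorem, so knowing that $T_0\Phi=\rho'_0$ is injective does \emph{not} give local injectivity of $\Phi$. The paper proceeds in three substeps. (a) One shows $u\mapsto T_u\Phi$ takes values in $\mathcal{H}_b(\mathbb{F}',\mathbb{M})$ by writing $T_u\Phi=\partial_2\varphi(1,u)$ as the solution $S_t$ of the variational equation $\dot S=A\circ S+B$ with $A(t)=\partial_1\rho'_i(\varphi(t,u),u)$ and $B(t)=\rho'_i(\varphi(t,u),\cdot)$, and bounding $\|S_1\|^{\operatorname{op}}_i\le M e^{K}$ \emph{uniformly in $i$}; this is exactly where assumption~(3) and Lemma~\ref{L_rho'lipschitz} enter (Claim~\ref{Cl_BoundDPhi}). (b) Since $\mathcal{IH}_b(\mathbb{F}',\mathbb{M})$ is open in $\mathcal{H}_b(\mathbb{F}',\mathbb{M})$ (Proposition~\ref{P_InjectiveSurjectiveBH}) and $T_0\Phi=\rho'_0\in\mathcal{IH}_b$, one gets $T_u\Phi$ injective with closed range for small $u$, together with the two-sided bounds $\frac{1}{M_1}\nu_i(v)\le\mu_i(T_u\Phi(v))\le M_1\nu_i(v)$. (c) Injectivity of $\Phi$ itself is proved by contradiction via a Rolle/Hahn--Banach argument: if $\Phi(u_k)=\Phi(v_k)$ with $u_k\neq v_k$ in balls of radius $\eta/k$, one tests against $\alpha=\delta_i^*(\alpha_i)$, uses the Lipschitz bound on $u\mapsto T_u\Phi$ and the surjectivity of $T_0\Phi_i^*$ to manufacture a $\beta_{k,i}$ with $\langle\beta_{k,i},\lambda_i(h_k)\rangle=1$, and derives $1\le M_1K_1\eta/k$, a contradiction for large $k$ (Lemma~\ref{L_closedFrechet}). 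Only after this does one invoke Theorem~1 of \cite{Pe} at each level to conclude that $\mathcal{U}_i=\Phi_i(B_{\mathbb{F}'_i}(0,\eta'))$ is an integral manifold and that $\mathcal{U}=\underleftarrow{\lim}\,\mathcal{U}_i$ is the desired Fr\'echet integral manifold. Your proposal reverses this order and replaces step~(c) by an appeal to a group of automorphisms that does not do the required work.
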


{\bf The proof of Theorem takes place in section \ref{ProofIntegrabilitySPLBanachBundles}.  }\\

Now, we  have the following consequences of Theorem \ref{T_IntegrabilitySPLBanachBundles}:

\begin{theorem}\label{T_WeakDistribution}  Let  $\left(  E_{i},\pi_{i},M_{i},\rho_{i}, [.,.]_{i} \right)$ be  a  submersive projective sequence of split  Lie algebroids \footnote{cf. Definition \ref{D_ProjectiveSequenceofBanachLieAlgebroids}}. Then  we have:
\begin{enumerate}
\item[1.]  $\left(E:=\underleftarrow{\lim}  E_{i},\pi:=\underleftarrow{\lim}\pi_{i},M:=\underleftarrow{\lim}M_{i},\rho=\underleftarrow{\lim}\rho_{i} \right)$ is  Fr\'echet anchored bundle and $\Delta=\rho(E)$ is a closed distribution on $M$
\item[2] If  $(\rho_i)$ satisfies the condition (3) in Definition \ref{D_SubmersiveProjectiveLimitOfLocalBanachBundles}, then $\Delta$ is integrable and each leaf $L$ of $\Delta$ is a projective limit of leaves $L_i$ of $\Delta_i$.
\end{enumerate}
\end{theorem}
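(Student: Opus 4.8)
The plan is to deduce Theorem \ref{T_WeakDistribution} from Theorem \ref{T_IntegrabilitySPLBanachBundles} by checking, point by point, that the global hypotheses of the present statement force the local assumptions (*) together with the Lie algebroid hypothesis required there. For item (1), I would first recall that a submersive projective sequence of split Lie algebroids is in particular a submersive projective sequence of anchored Banach bundles over a submersive projective sequence of Banach manifolds $M_i$; by the results on projective limits of Banach bundles (Definition \ref{D_StrongProjectiveLimitOfBanachBundle} and the surrounding material in Appendix \ref{_ProjectiveLimits}), the projective limit $E=\underleftarrow{\lim}E_i$ is then a Fr\'echet vector bundle over $M=\underleftarrow{\lim}M_i$, with projection $\pi=\underleftarrow{\lim}\pi_i$, and the maps $\rho_i$ being compatible with the bonding morphisms (assumption \textbf{(PSBLA 2)}) pass to the limit to give a bundle morphism $\rho=\underleftarrow{\lim}\rho_i\colon E\to TM$. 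Hence $(E,\pi,M,\rho)$ is a Fr\'echet anchored bundle. To see that $\Delta=\rho(E)$ is a closed distribution, I would argue fibrewise: $\Delta_x=\underleftarrow{\lim}\rho_i((E_i)_{x_i})$, and each $\rho_i((E_i)_{x_i})$ is closed in $T_{x_i}M_i$ by the split hypothesis (the kernel of $(\rho_i)_{x_i}$ is complemented and the range is closed — this is exactly condition (2) of Definition \ref{D_SubmersiveProjectiveLimitOfLocalBanachBundles}, built into ``split''), so a projective limit of closed subspaces is closed in the Fr\'echet topology on $T_xM=\underleftarrow{\lim}T_{x_i}M_i$.

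For item (2), the strategy is to verify the property (*) locally around an arbitrary point $x=\underleftarrow{\lim}x_i$ and then invoke Theorem \ref{T_IntegrabilitySPLBanachBundles} directly. Given $x$, choose a projective system of trivializing open neighbourhoods $U=\underleftarrow{\lim}U_i$ of $x$ over which the $E_i$ are trivial and compatible; restricting the global Lie algebroids $(E_i,\pi_i,M_i,\rho_i,[.,.]_i)$ to $U_i$ gives a submersive projective sequence of anchored Banach bundles $(E_i,\pi_i,U_i,\rho_i)$ carrying the restricted Lie brackets, so that $(E_i,\pi_i,U_i,\rho_i,[.,.]_i)$ is a submersive projective sequence of Banach-Lie algebroids in the sense of Definition \ref{D_ProjectiveSequenceofBanachLieAlgebroids}. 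Then I check the three numbered items of (*): item 1 holds because $\rho(E)=\Delta$ by definition of $\Delta$ in part (1), and this is a pointwise identity $\underleftarrow{\lim}\rho_i((E_i)_{z_i})=\Delta_z$; item 2 is precisely the splitness of the $E_i$ (complemented kernel, closed range of each $(\rho_i)_{z_i}$); item 3 is the hypothesis imposed in part (2), that $(\rho_i)$ satisfies condition (3) of Definition \ref{D_SubmersiveProjectiveLimitOfLocalBanachBundles}, i.e. a uniform operator-norm bound $\|(\rho_i)_{z_i}\|_i^{\operatorname{op}}\le C$ for suitable Finsler norms. Having verified (*) and the Lie algebroid structure, Theorem \ref{T_IntegrabilitySPLBanachBundles} applies and yields integrability of $\Delta$, with the maximal integral manifold through $x$ being a submersive projective limit of the maximal leaves of $\rho_i(E_i)=\Delta_i$ through $x_i$.

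It then remains to globalize: Theorem \ref{T_IntegrabilitySPLBanachBundles} produces, for each $x$, a maximal integral manifold $N(x)$ which is a projective limit $\underleftarrow{\lim}N_i(x_i)$ of maximal leaves of $\Delta_i$; since integrability is a local notion and the maximal leaves of the finite-dimensional (well, Banach) algebroids $\Delta_i$ partition each $M_i$ into a foliation, the compatibility of the bonding submersions $\delta_i^j$ with the $\rho_i$ shows that the leaf through $x_i$ in $M_i$ is $\delta_i^j(L_j(x_j))$ up to the usual saturation, so the leaves $L$ of $\Delta$ are exactly the projective limits $\underleftarrow{\lim}L_i$ of leaves $L_i$ of $\Delta_i$. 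The main obstacle I anticipate is not in the logical skeleton — which is essentially ``unwind the definitions and quote Theorem \ref{T_IntegrabilitySPLBanachBundles}'' — but in the bookkeeping needed to guarantee that the trivializing neighbourhoods and Finsler norms can be chosen \emph{compatibly across the whole projective system} (so that the restricted sequence is still a genuine submersive projective sequence of Banach-Lie algebroids satisfying (*)), and in checking that the global uniform-boundedness hypothesis in part (2) survives restriction to such $U_i$; both are consequences of the definitions collected in Appendix \ref{_ProjectiveLimits}, but they must be invoked carefully to keep the constant $C$ and the Finsler norms coherent.
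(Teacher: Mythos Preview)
Your proposal is correct and follows essentially the same route as the paper: part~(1) is exactly the content of Proposition~\ref{P_ProjectiveLimtAlgebroid} (which you unpack rather than merely cite), and part~(2) amounts to observing that conditions~(1) and~(2) of property~(*) are built into the definition of a submersive projective sequence of split Lie algebroids, so that once condition~(3) is assumed the result is a direct application of Theorem~\ref{T_IntegrabilitySPLBanachBundles}. Your final globalization paragraph is unnecessary, since Theorem~\ref{T_IntegrabilitySPLBanachBundles} already delivers the maximal leaf through each $x$ as a projective limit of the leaves $N_i$ through $x_i$, and your anticipated bookkeeping obstacle about compatible trivializations and Finsler norms is absorbed by the standing Definitions~\ref{D_StrongProjectiveLimitOfBanachBundle} and~\ref{D_ProjectiveSequenceofBanachLieAlgebroids}.
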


\begin{proof} The property (1) is a consequence of Proposition \ref{P_ProjectiveLimtAlgebroid}. From this property, it follows that locally $\Delta$ satisfies assumption (1) and (2) of Definition \ref{D_SubmersiveProjectiveLimitOfLocalBanachBundles} so if the assumption (3) is satisfied, the result is a direct consequence of Theorem \ref{T_IntegrabilitySPLBanachBundles}.\\
\end{proof}

\begin{theorem}\label{T_Frobenius} Let $\left( M_i,\delta_i^j \right)_{j\geq i}$ be a submersive projective sequence of Banach manifold and $(E_i, \pi_i, M_i)$ an involutive subbundle of $TM_i$ where $\pi_i$ is the restriction of the natural projection $p_{M_i}:TM_i\to M_i$. Assume that  the restriction  $T\delta_i^j: E_j\to E_i$  is a surjective  bundle morphism for all $i\in \mathbb{N}$ and $j\geq i$. Then $(E_i, \pi_i,M_i)$ is a submersive projective sequence of Banach bundles, and  $(E=\underleftarrow{\lim}E_i, \pi=\underleftarrow{\lim}\pi_i, M=\underleftarrow{\lim}M_i)$ is an integrable Fr\'echet subbundle of $TM$ whose each leaf $L$ of $E$  in $M$ is a projective limit of leaves $L_i$ of $E_i$ in $M_i$
\end{theorem}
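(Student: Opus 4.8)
The strategy is to verify that the hypotheses of Theorem \ref{T_WeakDistribution} (equivalently, of Theorem \ref{T_IntegrabilitySPLBanachBundles}) hold for the data $(E_i,\pi_i,M_i,\rho_i,[.,.]_i)$ obtained by taking $\rho_i$ to be the inclusion $E_i\hookrightarrow TM_i$ and $[.,.]_i$ the Lie bracket of vector fields restricted to sections of $E_i$. Since $E_i$ is an involutive subbundle, this bracket is well defined on sections of $E_i$ and the anchor (the inclusion) is a Lie algebroid morphism in the obvious way, so each $(E_i,\pi_i,M_i,\rho_i,[.,.]_i)$ is a split Banach-Lie algebroid; the word "split" is automatic here because a subbundle of a Banach bundle that is locally a direct factor has, by definition of subbundle, complemented fibers, and the kernel of the anchor is $\{0\}$, hence trivially complemented, while its range is $E_i$ itself, which is closed in $TM_i$. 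This dispatches conditions (1) and (2) of Definition \ref{D_SubmersiveProjectiveLimitOfLocalBanachBundles} once the projective-system structure is in place.

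\textbf{The projective system of algebroids.} First I would check that $(E_i,\pi_i,M_i)$ is genuinely a submersive projective sequence of Banach bundles in the sense of Notations \ref{N_ProjectiveSequenceBundles}: the bonding maps are the restrictions $T\delta_i^j|_{E_j}\colon E_j\to E_i$, which by hypothesis are surjective bundle morphisms covering the submersions $\delta_i^j$, and they are compatible with the projections $\pi_i$ since $T\delta_i^j$ is; compatibility of the brackets, i.e. that $T\delta_i^j$ intertwines $[.,.]_j$ and $[.,.]_i$ on projectable (or $\delta_i^j$-related) sections, follows from naturality of the Lie bracket of vector fields under the map $\delta_i^j$, exactly as in the verification of assumption \textbf{(PSBLA 2)} in Definition \ref{D_ProjectiveSequenceofBanachLieAlgebroids}. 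Then $E=\underleftarrow{\lim}E_i$ is a Fréchet subbundle of $TM=\underleftarrow{\lim}TM_i$, and $\Delta=\rho(E)=E$ is a closed distribution on $M$ by part (1) of Theorem \ref{T_WeakDistribution} (or directly by Proposition \ref{P_ProjectiveLimtAlgebroid}).

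\textbf{The Finsler-norm condition (3).} The one point that is not purely formal is hypothesis (3) of Definition \ref{D_SubmersiveProjectiveLimitOfLocalBanachBundles}: the existence of a uniform constant $C$ and compatible Finsler norms $||\;||^{E_i}$, $||\;||^{M_i}$ with $||(\rho_i)_{z_i}||_i^{\mathrm{op}}\le C$. But here $\rho_i$ is the \emph{inclusion} $E_i\hookrightarrow TM_i$, so if we choose the Finsler norm on $E_i$ to be the restriction of the Finsler norm on $TM_i$ — and such compatible Finsler norms on a projective system of Banach bundles exist, e.g. obtained from a projective system of Riemannian-type metrics or simply by pulling back along local trivializations as in the construction used elsewhere in the paper — then $(\rho_i)_{z_i}$ is an isometric embedding and $||(\rho_i)_{z_i}||_i^{\mathrm{op}}=1$ for every $i$ and every $z_i$. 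Hence condition (3) holds with $C=1$. I expect the only genuine work is to make this choice of Finsler norms \emph{projectively compatible}, i.e. so that $T\delta_i^j$ is norm-nonincreasing (or at least uniformly bounded) on $E_j\to E_i$; this is a standard consequence of the submersive structure, since one can start from norms downstairs and pull back, and is precisely the kind of bookkeeping already handled in Appendix \ref{_ProjectiveLimits}. With (1), (2), (3) verified, Theorem \ref{T_WeakDistribution}(2) applies verbatim and yields that $E$ is integrable with each leaf $L$ a projective limit of leaves $L_i$ of $E_i$, which is the assertion. The main obstacle, such as it is, is thus not an obstacle in content but in citation: assembling the compatibility of brackets and of Finsler norms from the submersive-projective-sequence machinery of the appendices.
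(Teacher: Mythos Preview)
Your proposal is correct and follows essentially the same route as the paper: set $\rho_i=\iota_i\colon E_i\hookrightarrow TM_i$, take the restricted Lie bracket, verify \textbf{(PSBLA 1)--(PSBLA 4)}, and observe that with the norm on $E_i$ induced from any Finsler norm on $TM_i$ one has $\|(\iota_i)_{z_i}\|_i^{\mathrm{op}}=1$, so condition (3) of Definition~\ref{D_SubmersiveProjectiveLimitOfLocalBanachBundles} holds with $C=1$ and Theorem~\ref{T_WeakDistribution} applies. Your closing worry about making the Finsler norms \emph{projectively compatible} under $T\delta_i^j$ is unnecessary: condition (3) asks only for a uniform bound on each $\|(\rho_i)_{z_i}\|_i^{\mathrm{op}}$ separately, not for any compatibility of the norms across different levels~$i$, and the paper accordingly just picks an arbitrary norm on each $T_{x_i}M_i$ independently.
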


\begin{proof} Since $\delta_i^j:M_j\to M_i$ is a surjective submersion, so is $T\delta_i^j:TM_j\to TM_i$.  If $T\delta_i^j:E_j \to E_i$ is a surjective morphism,  this implies that $T\delta_i^j$ is a submersion onto $E_i$ and so $(E_i, \pi_i,M_i)$  is a submersive projective sequence of Banach bundles. Let $\iota_i:E_i\to TM_i$ the natural inclusion and $[\;,\;]_i$ the restriction of Lie bracket of vector fields to  (local) sections of $E_i$. Then $(E_i,M_i, \iota_i, [\;,\;]_i)$ is a Banach Lie algebroid and since $T\delta_i^j\circ \iota_j=\iota_i\circ T\delta_i^j$ it follows that  $(E_i,M_i, \iota_i, [\;,\;]_i)$  is a is a submersive  projective sequence of Banach-Lie algebroids. Fix some $x=\underleftarrow{\lim}x_i\in M=\underleftarrow{\lim}M_i$.
According to Theorem \ref{T_WeakDistribution} we have only to show that the condition (3) of Definition \ref{D_SubmersiveProjectiveLimitOfLocalBanachBundles} is satisfied by $\iota_i$.\\ 
 Given any norm $||\;||^{M_i}$ on $T_{x_i}M_i$ we denote by $||\;||^{E_i}$ the induced norm on the fiber $\{E_i\}_{x_i}$, then, for  the associated   norm operator we have $||\{\iota_i\}_{x_i}||_i^{\operatorname{op}}=1$. So all the assumption of Theorem \ref{T_IntegrabilitySPLBanachBundles}  are satisfied which ends the proof.\\

\end{proof}

\section{Some applications and contre-example}\label{Applications}
\subsection{Application to submersive projective sequence of Banach Lie groups}\label{Application1}

Let  $\left(  G_{i},\delta_{i}^{j} \right) _{j \geq i}  $
be a submersive  a projective sequence of Banach-Lie groups
where $G_{i}$ is modelled on $\mathbb{G}_{i}$ (cf. Definition \ref{D_ProjectiveSequenceofBanachManifolds}). We denote by $\mathbf{L}(G_i)$  the Lie algebra of $G_i$. Then $\mathbf{L}(G_i)\equiv T_eG_i$ is isomorphic to $\mathbb{G}_i$. If we set $\bar{\delta}_i^j:= T_e\delta_i^j$, then each $\bar{\delta}_i^j$ is a surjective linear map from $\mathbf{L}(G_j)$ to $\mathbf{L}(G_i)$  whose kernel is complemented.\\
 Consider a sequence $\mathfrak{h}_i$ of  complemented sub-Lie algebra of  $\mathbf{L}(G_i)$ such that the restriction of $\bar{\delta}_i^j$ to $\mathfrak{h}_j$ is a 
 continuous surjective map. Then $(\mathfrak{h}_i,\hat{\delta}_i^j) _{j \geq i}  $ is a submersive  projective sequence of Banach Lie algebra and so $\mathfrak{h}
 =\underleftarrow{\lim}\mathfrak{h}_i$ is a Fr\'echet Lie algebra (cf. \cite{CaPe} chapter 4). Now from classic result on Banach Lie groups (cf. \cite{Lan}), by left translation each $\mathfrak{h}_i$ gives rise to a complemented involutive subbundle of $\mathcal{H}_i$ of $TG_i$ and the leaf  $H_i$ through the neutral  $e_i$ in $G_i$ has a structure of connected Banach Lie group so that the inclusion $\iota_i: H_i\to G_i$ is a Banach Lie morphism. Note that $\mathfrak{i}_i:=T_{e_i}\iota_i$ is nothing but else that the inclusion 
 of  $\mathfrak{h}_i$  in $\mathbf{L}(G_i)$ and  which induces the natural  inclusion $\hat{\iota}_i$  of $\mathcal{H}_i$ in $TG_i$.\\
 Moreover, since  $(\mathfrak{h}_i,\hat{\delta}_i^j) _{j \geq i}  $ is a submersive  projective sequence of Banach Lie algebra and $\left(  G_{i},\delta_{i}^{j} \right) _{j \geq i}  $
be a submersive  a projective sequence of Banach-Lie groups, it follows that $(\mathcal{H}_i, G_i, \widehat{\mathfrak{i}}_i, [\;,\;]_i)$ \footnote{  here $[\;,\;]_i$ denote again the 
restriction to sections of $\mathcal{H}_i$ of the Lie bracket of vector fields on $G_i$} is a submersive projective sequence of split Banach Lie algebroids. 

On the other hand,  from Theorem \ref{T_PLGLieGroup}   the Lie algebra  $\mathbf{L}(G)$ of $G=\underleftarrow{\lim}{G}_{i}$ is $\underleftarrow{\lim}{\mathbf{L}G}_{i}$ and so
$\mathfrak{h}=\underleftarrow{\lim}\mathfrak{h}_i$ is a closed complemented Lie subalgebra of $\mathbf{L}(G)$. As in the context of Banach setting, by left translation, $
\mathfrak{h}$ gives rise to an involutive Fr\'echet subbundle $\mathcal{H}$ of $TG$ which is clearly the projective limit of the submersive sequence $(\mathcal{H}_i, G_i, 
\widehat{\mathfrak{i}}_i, [\;,\;]_i)$. So from Theorem  \ref{T_Frobenius} by same arguments as in Banach Lie groups, we obtain:

\begin{theorem}
\label{T_IntegrableFrechet Group}
Let ${G}=\underleftarrow{\lim}{G}_{i}$ be a the projective limit of a  submersive   projective sequence of Banach-Lie groups $\left(  G_{i},\delta_{i}^{j} \right) _{j \geq i}  $ and for 
each $i\in \mathbb{N}$, consider  a closed complemented Banach Lie subalgebra  $\mathfrak{h}_i$ of  $\mathbf{L}(G_i)$. Assume that  the restriction of $\bar{\delta}_i^j$ to $\mathfrak{h}
_j$ is a continuous surjective map. Then there exists a Fr\'echet Lie group $H$ in $G$ such that $\mathbf{L}(H)$ is isomorphic to $\mathfrak{h}$ and $H$ is the projective limit 
of $(H_i, {\delta_i^j}_{|H_j})_{j\geq i}$.\\
\end{theorem}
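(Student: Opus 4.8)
The plan is to reduce everything to the Frobenius-type statement already available, namely Theorem \ref{T_Frobenius}, applied to the involutive subbundles $\mathcal{H}_i$ of $TG_i$. First I would record that, by hypothesis, $\left(G_i,\delta_i^j\right)_{j\geq i}$ is a submersive projective sequence, so each $\delta_i^j$ is a surjective submersion and $\bar\delta_i^j = T_e\delta_i^j$ is surjective with complemented kernel; combined with the assumption that $\bar\delta_i^j$ restricts to a continuous surjection $\mathfrak{h}_j \to \mathfrak{h}_i$, this makes $(\mathfrak{h}_i,\hat\delta_i^j)_{j\geq i}$ a submersive projective sequence of Banach Lie algebras, with Fr\'echet limit $\mathfrak{h}=\underleftarrow{\lim}\mathfrak{h}_i$. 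By the classical Banach Lie theory (\cite{Lan}), left translation turns each $\mathfrak{h}_i$ into a complemented involutive subbundle $\mathcal{H}_i\subset TG_i$, and its leaf through $e_i$ carries a connected Banach Lie group structure $H_i$ with $\mathbf{L}(H_i)\cong\mathfrak{h}_i$ and $\iota_i\colon H_i\hookrightarrow G_i$ a Banach Lie morphism, as recalled in the paragraph preceding the theorem.

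Next I would verify that the family $(\mathcal{H}_i, G_i)$ satisfies the hypotheses of Theorem \ref{T_Frobenius}. Left translation is equivariant: since $\delta_i^j$ is a group morphism, $T\delta_i^j$ intertwines left translations on $G_j$ and $G_i$, so $T\delta_i^j(\mathcal{H}_j)$ is precisely the left-translate of $\bar\delta_i^j(\mathfrak{h}_j)=\mathfrak{h}_i$, i.e.\ $T\delta_i^j$ restricts to a surjective bundle morphism $\mathcal{H}_j\to\mathcal{H}_i$. Hence Theorem \ref{T_Frobenius} applies: $\mathcal{H}:=\underleftarrow{\lim}\mathcal{H}_i$ is an integrable Fr\'echet subbundle of $TG=\underleftarrow{\lim}TG_i\cong T(\underleftarrow{\lim}G_i)$, and its leaf $H$ through $e=\underleftarrow{\lim}e_i$ is the projective limit of the leaves $H_i$ of $\mathcal{H}_i$ through $e_i$. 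Because $\mathcal{H}$ is the left-translate of $\mathfrak{h}$ (this is the content of the displayed paragraph before the theorem, identifying $\mathcal{H}$ with $\underleftarrow{\lim}\mathcal{H}_i$), the tangent space to $H$ at $e$ is $\mathfrak{h}$; and since $H=\underleftarrow{\lim}H_i$ with each $\delta_i^j$ restricting to a morphism $H_j\to H_i$ (because $\delta_i^j$ sends the leaf through $e_j$ to the leaf through $e_i$ by equivariance), $H$ is the projective limit $\underleftarrow{\lim}(H_i,{\delta_i^j}_{|H_j})$.

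Finally I would promote the integral manifold $H$ to a Fr\'echet Lie group. Each $H_i$ is a Banach Lie group, and the connecting maps ${\delta_i^j}_{|H_j}\colon H_j\to H_i$ are Lie group morphisms which are moreover submersive (they are restrictions of the submersions $\delta_i^j$ to the corresponding leaves, and by equivariance have complemented kernels); thus $(H_i,{\delta_i^j}_{|H_j})_{j\geq i}$ is a submersive projective sequence of Banach Lie groups, so its limit $H$ is a Fr\'echet Lie group in the sense used for $G$ itself (cf.\ Theorem \ref{T_PLGLieGroup}), with $\mathbf{L}(H)=\underleftarrow{\lim}\mathbf{L}(H_i)=\underleftarrow{\lim}\mathfrak{h}_i=\mathfrak{h}$. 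The inclusion $H\hookrightarrow G$ is $\underleftarrow{\lim}\iota_i$, hence a Fr\'echet Lie group morphism, and it realizes $H$ as a Fr\'echet Lie subgroup of $G$ with Lie algebra $\mathfrak{h}$. The main obstacle I anticipate is the bookkeeping showing that the leaf-level maps ${\delta_i^j}_{|H_j}$ are genuinely submersive group morphisms between Banach Lie groups — i.e.\ that passing to integral leaves is compatible with the projective-system structure and preserves the "submersive" condition — but this follows from the left-invariance of the whole construction and the fact that $\ker\bar\delta_i^j$ is complemented and splits $\mathfrak{h}_j$ over $\mathfrak{h}_i$; once this is in place the group structure and the identification of the Lie algebra are automatic from Theorem \ref{T_PLGLieGroup} and Theorem \ref{T_Frobenius}.
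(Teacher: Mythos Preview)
Your proposal is correct and follows essentially the same approach as the paper: the argument there is precisely the paragraph preceding the theorem, which reduces the statement to Theorem~\ref{T_Frobenius} via the left-invariant subbundles $\mathcal{H}_i\subset TG_i$ and then invokes ``the same arguments as in Banach Lie groups'' together with Theorem~\ref{T_PLGLieGroup}. If anything, your write-up is more explicit than the paper's about why $T\delta_i^j$ restricts to a surjective bundle morphism $\mathcal{H}_j\to\mathcal{H}_i$ and why the leaf $H$ inherits a Fr\'echet Lie group structure.
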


\begin{remark}\label{R_ApplicationComplementaryPropertiesOfProjectiveLimitGroupoids}
The reader will also find an application of Theorem \ref{T_IntegrabilitySPLBanachBundles} in the proof of Theorem 8.23 on submersive projective limit  of a projective sequence of Banach groupoids in \cite{CaPe} which is a kind of generalization of Theorem \ref{T_IntegrableFrechet Group} to Lie groupoids setting .\\
\end{remark}

\subsection{Application to sequences of prolongations of a Banach-Lie algebroid over a Banach manifold}\label{application2}
 Consider an anchored Banach bundle $(\mathcal{A},\pi, M, \rho)$  with typical fiber $\mathbb{A}$. Let $V\mathcal{A}\subset T\mathcal{A}$ be 
 the  vertical subbundle of $p_\mathcal{A}: T\mathcal{A}\to \mathcal{A}$.  If $\mathcal{A}_x:=\pi^{-1}(x)$ is the fiber over $x\in M$, according to \cite{CaPe2}, the prolongation $\mathbf{T}\mathcal{A}$  of the anchored  Banach bundle $( \mathcal{A},\pi,M,\rho)$ over $\mathcal{A}$   is the set $\{(x,a,b,c), (x, b)\in \mathcal{A}_x,\;\; (x,a,c)\in V_{(x,a)}\mathcal{A}\}$. It is a 
 Banach  vector bundle $\hat{\mathbf{p}}: \mathbf{T}\mathcal{A}\to \mathcal{A}$ with typical fiber $\mathbb{A}\times\mathbb{A}$ and we have an anchor $\hat{\rho}: \mathbf{T}\mathcal{A}\to T\mathcal{A}$ given by
 $$\hat{\rho}(x,a,b,c)=(x,a,\rho_x(b), c)\in T_{(x,a)}\mathcal{A}.$$
 
\emph{ From now on, we fix a Banach Lie algebroid $( \mathcal{A},\pi,M,\rho,[.,.]_\mathcal{A})$  such that the typical fiber $\mathbb{A}$ of $\mathcal{A}$ is finite dimensional. By the way, we have a Banach Lie algebroid structure $(\mathbf{T}\mathcal{A},\hat{\mathbf{p}}, \mathcal{A}, \hat{\rho},[.,.]_{\mathbf{T}\mathcal{A}})$} (cf. \cite{CaPe2} Corollary 44  )\\

We denote $(\mathcal{A}_1, \pi_1, \mathcal{A}_0,\rho_1,[.,.]_1)$ the Banach-Lie algebroid $( \mathcal{A},\pi,M,\rho,[.,.]_\mathcal{A})$ over a Banach manifold $\mathcal{A}^0=M$. Thus we have the following commutative diagram:
\begin{eqnarray}
\label{eq_diag_A0TM0}
\xymatrix{
\mathcal{A}_1 \ar[rr]^{\rho_1} \ar[rd]^{\pi_1}& &T\mathcal{A}_0 \ar[ld]^{p_{\mathcal{A}_0}} \\ & \mathcal{A}_0
}
\end{eqnarray}

According to the notations of Theorem 43 and Corollary 44  in \cite{CaPe2},  we set
$\mathcal{A}_2=\mathbf{T}\mathcal{A}_1$,$\;\rho_2=\hat{\rho}$, $\;[.,.]_2=[.,.]_{\mathbf{T}\mathcal{A}_1}$ and $\; \pi_2=\hat{\bf p}$. Then we have the following commutative diagram:

\begin{eqnarray}
\label{eq_diag_A01TM01}
\xymatrix{ \mathcal{A}_2 \ar[rr]^{\rho_2} \ar[rd]_{\pi_2}&& T\mathcal{A}_1 \ar[ld]_{p_{\mathcal{A}_1}}\ar[rd]^{T\pi_1} \\ &\mathcal{A}_1 \ar[rr]^{\rho_1} \ar[rd]_{\pi_1}& &T\mathcal{A}_0 \ar[ld]^{p_{\mathcal{A}_0}} \\ && \mathcal{A}_0}
\end{eqnarray}

Fix some $x\in \mathcal{A}_0$ and a norm $||\;||_0$ (resp.$||\;||_1$) on the fibre $T_x\mathcal{A}_0\equiv\mathbb{A}_0$ (resp. $\mathcal{A}_x=\pi_1^{-1}(x)\equiv\mathbb{A}
_1$). Since  the fiber $\mathbf{T}_{(x,a)}\mathcal{A}_1$ (resp. $T_{(x,a)}\mathcal{A}_1$) is isomorphic to $\mathbb{A}_1\times \mathbb{A}_1$ (resp.  $\mathbb{A}_0\times 
\mathbb{A}_1$), it follows that    $\sup \{||\;||_1\;, ||\;||_1\}$  (resp.  $\sup \{||\;||_0 \;, ||\;||_1\}$ gives rise  to a norm on 
$\mathbf{T}_{(x,a)}\mathcal{A}_1$ (resp. $T_{(x,a)}\mathcal{A}_1$). Then for the associated operator norm $||\;||^{\operatorname{op}}$ we have
\begin{eqnarray}\label{eq_Normrho01}
||(\rho_2)_{(x,a)}||^{\operatorname{op}}\leq \sup(||(\rho_1)_x||^{\operatorname{op}}, 1)
\end{eqnarray}

By induction, for $i\geq 1$, again according to notations of Theorem 43 and Corollary 44 in \cite{CaPe2}  ,  we we set

$\mathcal{A}^{i+1}=\mathbf{T}^{\mathcal{A}_i}\mathcal{A}_i$,$\;\rho_{i+1}=\hat{\rho_i}$, $\;[.,.]_{i+1}=[.,.]_{\mathbf{T}^{\mathcal{A}_i}\mathcal{A}_i}$ and $\; \pi_{i+1}=\hat{\bf p}$
and,  as before, we have the following commutative diagrams:
\begin{eqnarray}
\label{eq_diag_Aii+1TAii+1}
\xymatrix{ \mathcal{A}_{i+1} \ar[rr]^{\rho_{i+1}} \ar[rd]_{\pi_{i+1}}&& T\mathcal{A}_i \ar[ld]_{p_{\mathcal{A}}} \ar[rd]^{T\pi_i} \\ &\mathcal{A}_i \ar[rr]^{\rho_i} \ar[rd]_{\pi_i}& &T\mathcal{A}_{i-1} \ar[ld]^{p_{\mathcal{A}_{i-1}}}  \\ && \mathcal{A}_{i-1}}
\end{eqnarray}

Also, by same arguments as for (\ref{eq_Normrho01}) we obtain:
\begin{eqnarray}
\label{eq_Normrhoii+1}
||(\rho_{i+1})_{(x,a_1,\dots,a_{i+1})}||^{\operatorname{op}}\leq \sup(||(\rho_i)_{(x,a_1,\dots,a_i)}||^{\operatorname{op}}, 1)
\end{eqnarray}

It follows that we have a submersive projective sequence of Banach-Lie algebroids $\left( \mathcal{A}_i, \mathcal{A}_{i-1}, \rho_i, [.,.]_i \right) $ over a submersive  projective sequence of Banach manifolds $ \left( \mathcal{A}_{i} \right) _{i \in \mathbb{N}}$ which satisfies the assumptions of Theorem \ref{T_IntegrabilitySPLBanachBundles}. Thus,  we obtain:
\begin{theorem}
\label{T_SequenceProlongation}
Under the previous context,
\[
(\mathcal{A}= \underleftarrow{\lim}_{i\geq1}\mathcal{A}_i, \mathcal{M}=\underleftarrow{\lim}_{j\geq 0}\mathcal{A}_j, \rho=\underleftarrow{\lim}_{i\geq 1}\rho_i, [.,.]=\underleftarrow{\lim}_{i\geq 1}[.,.]_i)
\]
is a Fr\'echet Lie algebroid on the Fr\'echet manifold  $\mathcal{M}$ and the distribution $\rho(\mathcal{A})$ is integrable.  Each leaf ${L}$ is a projective  limit of a projective sequence of leaves of type $(L_i)$ defined by induction in the following way:

$L_0$ is a leaf of $\rho_1(\mathcal{A}_1)$ and if $L_i$ is a leaf of $\rho_i(\mathbb{A}_i)$ then $L_{i+1}=(\mathcal{A}_i)_{| L_i}$.\\
\end{theorem}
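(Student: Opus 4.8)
The plan is to verify that the inductively constructed sequence $(\mathcal{A}_i,\mathcal{A}_{i-1},\rho_i,[.,.]_i)$ fits exactly into the hypotheses of Theorem~\ref{T_IntegrabilitySPLBanachBundles} and then read off the conclusion. First I would record, using Theorem~43 and Corollary~44 of \cite{CaPe2}, that for every $i\geq 1$ the prolongation $\mathcal{A}_{i+1}=\mathbf{T}^{\mathcal{A}_i}\mathcal{A}_i$ carries a genuine Banach Lie algebroid structure $(\mathcal{A}_{i+1},\hat{\mathbf p},\mathcal{A}_i,\hat\rho_i,[.,.]_{i+1})$; since the base $\mathcal{A}_0=M$ has $\mathcal{A}_1$ with \emph{finite dimensional} typical fibre $\mathbb{A}$, an immediate induction shows every $\mathcal{A}_i$ has finite dimensional typical fibre $\mathbb{A}^{2^{i-1}}$ (or the appropriate finite power), so in particular the kernel of each $(\rho_i)_{(x,a_1,\dots,a_i)}$ is finite-dimensional, hence complemented, and its range, being a finite-codimension-in-image subspace behaving well, is closed; this is condition~(2) of Definition~\ref{D_SubmersiveProjectiveLimitOfLocalBanachBundles}.

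Next I would establish that the maps $T\pi_i\colon T\mathcal{A}_i\to T\mathcal{A}_{i-1}$, restricted appropriately, give the connecting morphisms of a \emph{submersive} projective sequence. The key point is that $\pi_i\colon\mathcal{A}_i\to\mathcal{A}_{i-1}$ is a surjective submersion (it is a Banach vector bundle projection), hence $T\pi_i$ is a surjective submersion, and one checks from the explicit description $\hat\rho_i(x,a,b,c)=(x,a,\rho_i(b),c)$ that $T\pi_i$ maps $\mathcal{A}_{i+1}$ onto $\mathcal{A}_i$ compatibly with the anchors and the brackets — i.e. assumption \textbf{(PSBLA~2)} of Definition~\ref{D_ProjectiveSequenceofBanachLieAlgebroids} holds. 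This makes $(\mathcal{A}_i,\mathcal{A}_{i-1},\rho_i,[.,.]_i)$ a submersive projective sequence of Banach Lie algebroids, so conditions~(1) of Definition~\ref{D_SubmersiveProjectiveLimitOfLocalBanachBundles} is automatic from $\Delta=\rho(\mathcal{A})=\underleftarrow{\lim}\rho_i(\mathcal{A}_i)$, and the extra Lie-bracket hypothesis of Theorem~\ref{T_IntegrabilitySPLBanachBundles} holds by construction.

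The remaining hypothesis — condition~(3), the uniform operator bound — is exactly what the chain of inequalities \eqref{eq_Normrho01} and \eqref{eq_Normrhoii+1} delivers. Starting from a fixed norm $||\;||_0$ on $T_x\mathcal{A}_0$ and $||\;||_1$ on the fibre of $\mathcal{A}_1$, one equips each $\mathcal{A}_{i+1}$ and $T\mathcal{A}_i$ with the sup norm of the factor norms, and \eqref{eq_Normrhoii+1} gives $||(\rho_{i+1})||^{\operatorname{op}}\leq\sup(||(\rho_i)||^{\operatorname{op}},1)$; iterating, $||(\rho_i)||^{\operatorname{op}}\leq\sup(||(\rho_1)_x||^{\operatorname{op}},1)=:C$ for all $i$, uniformly on a neighbourhood of $x$ (here one uses that $\rho_1$ is continuous and the base is locally nice so $||(\rho_1)_z||^{\operatorname{op}}$ is locally bounded). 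Thus all the assumptions~(*) plus the Lie-algebroid hypothesis of Theorem~\ref{T_IntegrabilitySPLBanachBundles} are met, and that theorem yields that $\Delta=\rho(\mathcal{A})$ is integrable with each maximal leaf a closed Fréchet submanifold of $\mathcal{M}=\underleftarrow{\lim}\mathcal{A}_j$ which is the projective limit of the maximal leaves $L_i$ of $\rho_i(\mathcal{A}_i)$ through the corresponding point. Finally I would identify those leaves with the inductive description in the statement: $L_0$ is a leaf of $\rho_1(\mathcal{A}_1)$ in $\mathcal{A}_0$, and since $\mathcal{A}_{i+1}=\mathbf{T}^{\mathcal{A}_i}\mathcal{A}_i$ and $\rho_{i+1}=\hat\rho_i$ acts as the identity in the vertical $\mathcal{A}_i$-direction, the maximal leaf of $\rho_{i+1}(\mathcal{A}_{i+1})$ over a leaf $L_i$ is precisely the restricted bundle $(\mathcal{A}_i)_{|L_i}$, giving $L_{i+1}=(\mathcal{A}_i)_{|L_i}$ and hence $L=\underleftarrow{\lim}L_i$.

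The main obstacle I anticipate is not any single deep step but the bookkeeping needed to check \textbf{(PSBLA~2)} and the submersivity of the connecting maps $T\pi_i$ at the level of prolongations: one must carefully track how $\hat\rho_i$, the vertical subbundle $V\mathcal{A}_i\subset T\mathcal{A}_i$, and $T\pi_i$ interact so that the projective maps $\mathcal{A}_{i+1}\to\mathcal{A}_i$ are honest surjective submersions and intertwine the brackets $[.,.]_{i+1}$ and $[.,.]_i$ — essentially a compatibility already present in \cite{CaPe2} but which must be made explicit here; the uniform-bound estimate, by contrast, is the easy part, being handed to us by \eqref{eq_Normrhoii+1}.
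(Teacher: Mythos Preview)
Your proposal is correct and follows exactly the same approach as the paper: the paper's argument is the short paragraph preceding the theorem, which simply asserts that the inductively built prolongation sequence $(\mathcal{A}_i,\mathcal{A}_{i-1},\rho_i,[.,.]_i)$ is a submersive projective sequence of Banach--Lie algebroids satisfying the hypotheses of Theorem~\ref{T_IntegrabilitySPLBanachBundles} (the uniform bound coming from \eqref{eq_Normrho01}--\eqref{eq_Normrhoii+1}, complementation from the finite-dimensional fibre assumption), and then invokes that theorem. Your write-up is in fact more detailed than the paper's --- in particular your explicit verification of condition~(2) via finite-dimensionality and your identification of the leaves $L_{i+1}=(\mathcal{A}_i)_{|L_i}$ are left implicit in the paper --- so the ``bookkeeping obstacle'' you flag is real but is precisely what the paper sweeps under the phrase ``It follows that''.
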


\subsection{ A contre-example}\label{Contre-example} In this subsection we give a Example  of an integrable distribution on a Fr\'echet bundle over a finite dimensional manifold which    satisfies the assumptions (*) in  Definition \ref{D_SubmersiveProjectiveLimitOfLocalBanachBundles} but is  a projective limit of a submersive sequence of Banach {\bf not integrable} distributions.\\

Let $E=M\times\mathbb{R}^m$ the trivial bundle over a manifold $M$ of dimension $n$. The set $J^k(E)$ of the $k$-jets of section of $E$ over $M$ is a finite dimensional manifold  which is a vector bundle $\pi^k:J^k(E)\to M$ and whose typical  fiber is  is the space
$
{\displaystyle\prod\limits_{j=0}^{k}}
\mathcal{L}_{\mathrm{sym}}^{j}\left(  \mathbb{R}^{n},\mathbb{R}^{m}\right)
$
where $\mathcal{L}_{\mathrm{sym}}^{j}\left(  \mathbb{R}^{n},\mathbb{R}%
^{m}\right)  $ is the space of continuous $j$-linear symmetric mappings
$\mathbb{R}^{n}$ $\to$ $\mathbb{R}^{m}.$ Then each projection
$\pi_{k}^{l}:J^{l}\left(  E\right)  \to J^{k}\left( E\right)  $
defined, for $l\geq k$, by
\[
\pi_{k}^{l}\left[  j^{l}\left(  s\right)  \left(  x\right)  \right]
=j^{k}\left(  s\right)  \left(  x\right)
\]
is a smooth surjection.

\begin{proposition}
\label{P_FrechetStructureOnProjectiveLimitOfJetsOfBanachVectorBundles} (\cite{CaPe})
$\left(  J^{k}\left(  E\right)  ,\pi_{k}^{l}\right)$ is a  submersive projective sequence of Banach
vector bundles and the projective limit
\[
J^{\infty}\left(  E\right)  =\underleftarrow{\lim}J^{k}\left(  E\right)
\]
can be endowed with a structure of Fr\'{e}chet vector bundle whose fibre is isomorphic to the Fr\'{e}chet space
${\displaystyle\prod\limits_{j=0}^{\infty}}
\mathcal{L}_{\mathrm{sym}}^{j}\left(  \mathbb{R}^{n},\mathbb{R}^{m}\right)$.\\
\end{proposition}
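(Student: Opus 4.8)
The plan is to verify the two assertions separately: first that $(J^k(E),\pi_k^l)$ forms a submersive projective sequence of Banach (here finite-dimensional) vector bundles in the sense of Definition \ref{D_StrongProjectiveLimitOfBanachBundle} and Notations \ref{N_ProjectiveSequenceBundles}, and then that the projective limit carries the announced Fr\'echet vector bundle structure. For the first part, I would fix the trivial bundle $E=M\times\mathbb{R}^m$ and recall the standard local description of $J^k(E)$: over a chart domain $\Omega\subset M$ (identified with an open subset of $\mathbb{R}^n$), a section $s$ is a map $\Omega\to\mathbb{R}^m$ and $j^k(s)(x)$ is encoded by the tuple $(s(x),Ds(x),\dots,D^k s(x))$, so that $J^k(E)_{|\Omega}\cong\Omega\times\prod_{j=0}^k\mathcal{L}^j_{\mathrm{sym}}(\mathbb{R}^n,\mathbb{R}^m)$. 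In these coordinates the projection $\pi^l_k$ is simply the linear truncation $(\xi_0,\dots,\xi_l)\mapsto(\xi_0,\dots,\xi_k)$, which is manifestly a surjective submersion that is a bundle morphism over $\mathrm{id}_M$; moreover its fiberwise kernel $\prod_{j=k+1}^l\mathcal{L}^j_{\mathrm{sym}}(\mathbb{R}^n,\mathbb{R}^m)$ is (trivially, being finite-dimensional) complemented, and the transition cocycle for $J^l(E)$ restricts compatibly to that of $J^k(E)$ via the chain rule (Fa\`a di Bruno formula), so the family is genuinely a projective \emph{sequence} of Banach vector bundles with the submersivity property. I would state this verification at the level of local trivializations and the compatibility of the two cocycles, citing the chain-rule computation rather than writing it out.

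For the second assertion I would invoke the general machinery of $\S$\ref{__ProjectiveLimitsOfBanachManifolds} (projective limits of Banach manifolds) together with the fact that a submersive projective sequence of Banach vector bundles has a projective limit which is a Fr\'echet vector bundle: concretely, $J^\infty(E)=\underleftarrow{\lim}J^k(E)$ is covered by the projective limits of the local trivializations above, namely $\Omega\times\prod_{j=0}^\infty\mathcal{L}^j_{\mathrm{sym}}(\mathbb{R}^n,\mathbb{R}^m)$, and the typical fiber is the Fr\'echet space $\prod_{j=0}^\infty\mathcal{L}^j_{\mathrm{sym}}(\mathbb{R}^n,\mathbb{R}^m)$ endowed with the product topology, which is a countable product of finite-dimensional spaces, hence Fr\'echet. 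The transition functions of $J^\infty(E)$ are the projective limits of the (smooth, fiberwise linear) transition functions of the $J^k(E)$, so they are smooth in the Fr\'echet sense and fiberwise linear, giving the Fr\'echet vector bundle structure; the base $M$ is unchanged.

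The main obstacle — modest, since the statement is attributed to \cite{CaPe} and the fibers are finite-dimensional — is bookkeeping: one must check that the local pieces glue, i.e. that the projective-limit transition maps are well defined and smooth as maps into the Banach Lie group of automorphisms of the Fr\'echet fiber (cf. Appendix \ref{__TheFrechetSpaceHF1F2-TheBanachSpaceHbF1F2}), which amounts to observing that each transition map of $J^k(E)$ is obtained from that of $J^{k-1}(E)$ by appending one more derivative-order block governed by the chain rule, so the limit is a well-defined smooth Fr\'echet-bundle cocycle. I would therefore reduce the whole statement to: (i) the local coordinate description of $J^k(E)$ and the explicit form of $\pi^l_k$; (ii) the chain-rule compatibility of the cocycles; (iii) the general principle that a submersive projective sequence of Banach vector bundles has a Fr\'echet vector bundle as projective limit, with fiber the projective limit of the fibers. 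Hence the proof is essentially a citation of \cite{CaPe} together with these three structural observations.
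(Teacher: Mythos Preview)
Your proposal is correct, and in fact goes beyond what the paper does: the paper gives no proof of this proposition at all, simply citing it from \cite{CaPe}. Your sketch---local trivialization of $J^k(E)$ via Taylor coefficients, the truncation description of $\pi^l_k$ showing it is a surjective submersion with complemented kernel, chain-rule compatibility of the cocycles, and then invoking Proposition~\ref{P_ProjectiveLimitOfBanachVectorBundles} to pass to the projective limit---is exactly the argument one expects the cited reference to contain, and is sound.
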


Let $s$ be a section of $\pi$ on a neighbourhood $U$ of $x\in M$. For
$\xi=j^{k}\left(  s\right)  \left(  x\right)  \in J^{k}\left( E\right)  $, the $n$-dimensional subspace $R\left(  s,x\right)  $ of $T_{\xi}J^{k}\left(E\right)  $ equals to the tangent space at $\xi$ to the submanifold  $j^{k}\left(  s\right)  \left(  U\right)  \subset
J^{k}\left(  \pi\right)  $ is called an  \textit{$R$-plane}\index{$R$-plane}.

The Cartan subspace $\mathcal{C}^k\left(  E \right)  $ of $T_{\xi}J^{k}\left(
E \right)  $ is the linear subspace spanned by all $R$-planes
$R\left(  s^{\prime},x\right)  $ such that $j^{k}\left(  s^{\prime}\right)
\left(  x\right)  = \xi$. So it is the hull of the union of
$\left(  j^{k}\left(  s\right)  \right)  _{\ast}\left(  x\right)  \left(
T_{x}M\right)  $ where $s$ is any local section of
$\pi$ around de $x$.

 The Cartan subspaces form a smooth distribution on $J^{k}\left(  \pi\right)  $ called
\textit{Cartan distribution}and denoted $\mathcal{C}^{k}$. Then $\mathcal{C}^{k}$ is a regular distribution which  a contact distribution  and so is not integrable (cf. \cite{Igo}). We have a submersive projective limit of  bundle $(\mathcal{C}^{k}, T\pi^l_k, j^k(E))$ whose projective limit $\mathcal{C}= \underleftarrow{\lim}\mathcal{C}^{k}$ is called the Cartan distribution on $J^{\infty}\left(  E\right)$.  In fact $\mathcal{C}$ is integrable (cf. \cite{Igo}). Note that since $\mathcal{C}^{k}$ is a subbundle of $TJ^k(E)$, from the proof of Theorem \ref{T_Frobenius}, the condition (3) of Definition \ref{D_SubmersiveProjectiveLimitOfLocalBanachBundles} is satisfied.\\

\section{Proof of Theorem \ref{T_IntegrabilitySPLBanachBundles}}\label{ProofIntegrabilitySPLBanachBundles} 

Fix some  $x\in M$. According to the property (*) and the Definition \ref{D_SubmersiveProjectiveLimitOfLocalBanachBundles},  we can choose a  submersive projective sequence of charts  $\left(U_i,{\delta_i^j}_{| U_j}\right)_{j\geq i}$ and a submersive projective sequence of Banach bundles $(E_i, \lambda_i^j)_{j\geq i}$, such that:
	\begin{itemize}
		\item[--]
		$U=\underleftarrow{\lim}U_i$ is an open neighbourhood of $x$ in $M$;
		\item[--]
		If $x=\underleftarrow{\lim}(x_i)$, each $U_i$ is the contractile domain of a  chart $(U_i,\phi_i)$ around $x_i$ in $M_i$ and $(U=\underleftarrow{\lim}(U_i),\phi=\underleftarrow{\lim}(\phi_i))$ is a projective limit chart in $M$ around $x$.
	\item[--] the projective sequence of Banach bundles $(E_i, \lambda_i^j)_{j\geq i}$ satisfies the assumption (*) on $U$.\\
	\end{itemize}
	
\textit{Step 1: The kernel of $\rho_x$ is supplemented.}\\
	There exists a trivialization $\tau_i: E_i\to  U_i\times \mathbb{E}_i$ which satisfies the compatibility condition:
	\begin{eqnarray}
	\label{eq_CompatibleFdt}
	(\delta_i^j\times\overline{\lambda_i^j})\circ\tau_j=\tau_i\circ \lambda_i^j
	\end{eqnarray}
	where $(\mathbb{E}_i,\overline{\lambda_i^j})_{j\geq i}$ is the projective sequence of Banach spaces  on which $(E_i, \lambda_i^j)_{j\geq i}$ is modeled.\\
	Under these conditions, without loss of generality we may assume that, for each $i\in \mathbb{N}$, we have
	\begin{itemize}
		\item[--] $x_i\equiv 0\in \mathbb{M}_i$;
		\item[--] $U_i$ is an open subset of $\mathbb{M}_i$ and so  $TU_i=U_i\times\mathbb{M}_i$;
		\item[--] $E_i=U_i\times \mathbb{E}_i$.		
	\end{itemize}
The projection $\delta_i^j$  at point $x_j\equiv 0$ (resp. $\lambda_i^j$ in restriction to the fibre of $E_j$ over $x_j\equiv 0$) is denoted $d_i^j$ (resp. $\ell_i^j$). The morphism $\rho_i$ in restriction to the fibre $E_i$ over $x_i\equiv 0$ is denoted $r_i$ and  $\rho_x$ is denoted $r$, so that  $r=\underleftarrow{\lim}r_i$. Now, according to the context of  Assumption 1 in Definition \ref{D_SubmersiveProjectiveLimitOfLocalBanachBundles}, we have the following result:
\begin{lemma}
	\label{L_kerrhox}
	There exists a decomposition
	$\mathbb{E}=\ker r\oplus \mathbb{F}'$  with the following property:\\
	 if $(\nu'_n)$ (resp.  $(\mu_n)$ is  the graduation  on $\mathbb{F}'$ (resp.  $(\mu_n)$ on  $\mathbb{M}$)
	  induced by the norm $||\;||^{\mathbb{E}_i}$ (resp. $||\;||^{\mathbb{M}_i}$) on $(E_i)_{x_i}$ (resp. $T_{x_i}M_i$),  then the restriction of $r$ to $\mathbb{F}'$ is a closed uniformly  bounded operator according to these graduations \footnote{cf. Appendix \ref{__TheFrechetSpaceHF1F2-TheBanachSpaceHbF1F2}}.
	\bigskip
\end{lemma}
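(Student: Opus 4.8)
\emph{Proof plan.} The strategy is to obtain the complement of $\ker r$ in $\mathbb{E}$ as the projective limit of a \emph{compatible} family of complements chosen level by level. By Assumption 2 of Definition \ref{D_SubmersiveProjectiveLimitOfLocalBanachBundles}, for every $i$ the subspace $\ker r_i$ is complemented in $\mathbb{E}_i$ and $r_i(\mathbb{E}_i)$ is closed; and, by the defining properties of a submersive projective sequence of anchored bundles, the anchors intertwine with the connecting maps, $d_i^j\circ r_j=r_i\circ\overline{\lambda_i^j}$, each $\overline{\lambda_i^j}$ is onto with complemented kernel, and $\overline{\lambda_i^j}$ carries $\ker r_j$ onto $\ker r_i$ (Definition \ref{D_ProjectiveSequenceofBanachLieAlgebroids}). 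The plan is to construct by induction closed subspaces $\mathbb{F}'_i\subset\mathbb{E}_i$ with
\[
\mathbb{E}_i=\ker r_i\oplus\mathbb{F}'_i\qquad\text{and}\qquad \overline{\lambda_i^{i+1}}(\mathbb{F}'_{i+1})=\mathbb{F}'_i ,
\]
and then to set $\mathbb{F}'=\underleftarrow{\lim}\mathbb{F}'_i\subset\mathbb{E}$, equipped with the graduation $(\nu'_n)$ induced by the norms $||\;||^{\mathbb{E}_n}$.

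For the smallest index take any closed complement $\mathbb{F}'_0$ of $\ker r_0$. Assuming $\mathbb{F}'_j$ built, write $\overline\lambda:=\overline{\lambda_j^{j+1}}$, $\mathbb{K}:=\ker\overline\lambda$, fix a continuous linear section $\sigma$ of $\overline\lambda$ (possible since $\mathbb{K}$ is complemented) and a closed complement $\mathbb{K}_1$ of $\mathbb{K}\cap\ker r_{j+1}$ in $\mathbb{K}$ (here the splitting hypotheses packaged in the notion of a submersive projective sequence of split Banach--Lie algebroids are used), and set $\mathbb{F}'_{j+1}:=\sigma(\mathbb{F}'_j)\oplus\mathbb{K}_1$. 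One checks: $\overline\lambda(\mathbb{F}'_{j+1})=\mathbb{F}'_j$; $\mathbb{F}'_{j+1}\cap\ker r_{j+1}=0$, because $\overline\lambda(\ker r_{j+1})\subset\ker r_j$ and $\ker r_j\cap\mathbb{F}'_j=0$; $\mathbb{E}_{j+1}=\ker r_{j+1}+\mathbb{F}'_{j+1}$, because $\overline\lambda$ maps $\ker r_{j+1}$ onto $\ker r_j$ and $\mathbb{K}=(\mathbb{K}\cap\ker r_{j+1})\oplus\mathbb{K}_1$. Finally $\mathbb{F}'_{j+1}$ is closed since $\sigma\circ P_j\circ\overline\lambda$, with $P_j\colon\mathbb{E}_j\to\mathbb{F}'_j$ the projection along $\ker r_j$, is a continuous idempotent on $\mathbb{E}_{j+1}$ with image $\sigma(\mathbb{F}'_j)$ vanishing on $\mathbb{K}_1$.

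It then remains to pass to the limit. Compatibility $\overline{\lambda_i^{i+1}}(\mathbb{F}'_{i+1})=\mathbb{F}'_i$ together with uniqueness of the splittings makes the decomposition descend: $\mathbb{E}=\ker r\oplus\mathbb{F}'$ with $\ker r=\underleftarrow{\lim}\ker r_i$. Uniform boundedness of $r|_{\mathbb{F}'}$ is immediate from Assumption 3, since for $v=\underleftarrow{\lim}v_i\in\mathbb{F}'$ one has $\mu_n(r(v))=||r_n(v_n)||^{\mathbb{M}_n}\leq||r_n||_n^{\operatorname{op}}\,||v_n||^{\mathbb{E}_n}\leq C\,\nu'_n(v)$. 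For closedness, each $r_i|_{\mathbb{F}'_i}\colon\mathbb{F}'_i\to r_i(\mathbb{E}_i)$ is a continuous bijection onto a Banach space, hence a topological isomorphism by the open mapping theorem (Assumption 2); as the connecting maps are surjective, a Mittag--Leffler argument from Appendix \ref{_ProjectiveLimits} gives $r(\mathbb{E})=\underleftarrow{\lim}r_i(\mathbb{E}_i)$, a closed subspace of $\mathbb{M}$, and then $r|_{\mathbb{F}'}\colon\mathbb{F}'\to r(\mathbb{F}')=r(\mathbb{E})$ is a topological isomorphism; thus $r|_{\mathbb{F}'}$ is a closed uniformly bounded operator in the sense of Appendix \ref{__TheFrechetSpaceHF1F2-TheBanachSpaceHbF1F2}. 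The main obstacle is the inductive step: arranging all the complementation data (complemented kernels of the connecting maps and of the anchors, surjectivity of the connecting maps on the kernels of the anchors) so that the $\mathbb{F}'_i$ are simultaneously transverse to $\ker r_i$ and compatible with the projections $\overline{\lambda_i^{i+1}}$; everything afterwards is bookkeeping plus the projective-limit machinery of the Appendices.
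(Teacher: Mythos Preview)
Your route differs from the paper's. The paper simply fixes, for each $i$ separately, an arbitrary closed complement $\mathbb{F}'_i$ of $\ker r_i$ in $\mathbb{E}_i$ and then asserts, from $d_i^j\circ r'_j=r'_i\circ\ell_i^j$ and the fact that each $r'_i$ is an isomorphism, that $\ell_i^j(\mathbb{F}'_j)\subset\mathbb{F}'_i$; after that, $(\ker r_i)$ and $(\mathbb{F}'_i)$ are projective subsystems, $\mathbb{E}=\ker r\oplus\mathbb{F}'$, and the uniform bound $\|r'_i\|^{\operatorname{op}}_i\le C$ gives uniform boundedness (closedness of the range is taken from the hypothesis that $\Delta$ is a closed distribution, so your Mittag--Leffler detour is unnecessary). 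Your inductive construction of compatible complements is precisely the mechanism needed to justify the compatibility the paper asserts, so the overall strategies converge after that step.

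However, your inductive step has two genuine gaps. First, you use that $\overline\lambda=\overline{\lambda_j^{j+1}}$ maps $\ker r_{j+1}$ \emph{onto} $\ker r_j$, citing Definition~\ref{D_ProjectiveSequenceofBanachLieAlgebroids}; but that definition only yields the intertwining $d_j^{j+1}\circ r_{j+1}=r_j\circ\overline\lambda$, which gives $\overline\lambda(\ker r_{j+1})\subset\ker r_j$, not equality. Surjectivity would require, for each $v\in\ker r_j$, a preimage $u\in\mathbb{E}_{j+1}$ with $\overline\lambda(u)=v$ \emph{and} $r_{j+1}(u)=0$; the surjectivity of $\overline\lambda$ alone only forces $r_{j+1}(u)\in\ker d_j^{j+1}$. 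Without this ``onto'', your verification that $\mathbb{E}_{j+1}=\ker r_{j+1}+\mathbb{F}'_{j+1}$ collapses. Second, you need a closed complement $\mathbb{K}_1$ of $\mathbb{K}\cap\ker r_{j+1}$ inside $\mathbb{K}=\ker\overline\lambda$, and you attribute this to ``splitting hypotheses packaged in the notion of a submersive projective sequence of split Banach--Lie algebroids''; but Definitions~\ref{D_StrongProjectiveLimitOfBanachBundle} and~\ref{D_ProjectiveSequenceofBanachLieAlgebroids} only provide that $\ker r_{j+1}$ and $\mathbb{K}$ are each complemented in $\mathbb{E}_{j+1}$, and the intersection of two complemented subspaces of a Banach space need not be complemented. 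Both points must either be derived from the stated data or added explicitly as hypotheses before your construction goes through.
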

\begin{proof}{\it of Lemma \ref{L_kerrhox}}	${}$\\

At first, in such a context we have $T_{x_j}\delta_i^j\equiv d_i^j$ and the following compatibility condition:
\begin{eqnarray}
\label{eq_rCommutativeDiagram}
		 d_i^j\circ r_j=r_i\circ \ell_i^j.
\end{eqnarray}
		
We set $\mathbb{F}_i=r_i(\mathbb{E}_i)$ for all $i\in \mathbb{N}$.   From Definition \ref{D_SubmersiveProjectiveLimitOfLocalBanachBundles} assumption 1,  $\mathbb{F}_i$ is a Banach subspace of $\mathbb{E}_i$ and  there exists a decomposition $\mathbb{E}_i=\ker r_i\oplus \mathbb{F}'_i$. Thus  the restriction $r'_i$ of $r_i$ to $\mathbb{F}'_i$ is an isomorphism onto $\mathbb{F}_i$.
Now, from (\ref{eq_rCommutativeDiagram}), we have
\[
		\forall (i,j)\in \mathbb{N}^2:j>i, \; d_i^j\circ r'_j=r'_i\circ \ell_i^j.
\]
But since each $r'_i$ is an isomorphism, the restriction $(\ell_i^j)'$  of $\ell_i^j$ to $\mathbb{F}'_j$ takes values in $\mathbb{F}'_i$ for all $\left( i,j \right)  \in \mathbb{N}^2$ such that $j\geq i$. Moreover, as $\delta_i^j$ is surjective, according to (\ref{eq_rCommutativeDiagram}) again, this implies that $d_i^j(\mathbb{F}_j)=\mathbb{F}_i$ and so $\ell_i^j(\mathbb{F}'_j)=\mathbb{F}'_i$. Since $(\mathbb{E}_i,\ell_i^j)_{j\geq i}$ is a projective sequence,  this implies that  $(\mathbb{F}'_i,(\ell_i^j)')_{j\geq i}$ is a surjective projective system. The vector space $\mathbb{F}'=\underleftarrow{\lim}\mathbb{F}'_i$ is then a Fr\'echet subspace of $\mathbb{E}$. \\
On the other hand, let  $(\ell_i^j)^{''}$ be the restriction of $\ell_i^j$ to $\ker r_j$. Always from  (\ref{eq_rCommutativeDiagram}), we have
		\[
		\forall (i,j)\in \mathbb{N}^2:j>i, \;(\ell_i^j)^{''}(\ker r_j)\subset \ker r_i.
		\]
By same argument, it implies  that $(\ker r_i, (\ell_i^j)^{''})_{j\geq i}$ is a projective sequence and $\ker  r= \underleftarrow{\lim}\ker r_i$. Moreover, since $\mathbb{E}_i=\ker r_i\oplus \mathbb{F}'_i$, it follows that $\mathbb{E}=\ker r\oplus \mathbb{F}'$ and also the restriction $r'$ of $r$ to $\mathbb{F}'$  is obtained as $r'= \underleftarrow{\lim}r'_i$ and $r'$ is an injective continuous operator $r':\mathbb{F}'\to \mathbb{M}$ whose range is the closed subspace $\mathbb{F}=\underleftarrow{\lim}\mathbb{F}_i$. It remains to show that $r'$ is uniformly bounded.\\
According to the context of assumption 2 of Definition \ref{D_SubmersiveProjectiveLimitOfLocalBanachBundles}, there exists a constant $C>0$  and, for each $i\in \mathbb{N}$,  we have a norm $\|\;\|^{\mathbb{E}_i}$ on $\mathbb{E}_i$ and a norm $\|\;\|^{\mathbb{M}_i}$ on $\mathbb{M}_i$  such that $\|r_i\|^{\operatorname{op}}_{i}\leq C$. As $\mathbb{F}'_i$  is a closed Banach subspace of $\mathbb{E}_i$, it follows that for the induced norm on $\mathbb{F}'_i$ we have 		
\begin{eqnarray}
\label{eq_r'iC}
		\forall i\in \mathbb{N},\;||r'_i||^{\operatorname{op}}_{i}\leq C.
\end{eqnarray}
Set $\ell_i=\underleftarrow{\lim}\ell_i^j$ and $d_i=\underleftarrow{\lim}d_i^j$ . By construction we have $\ell_i(\mathbb{F}')=\mathbb{F}'_i$ and $d_i(\mathbb{M})=\mathbb{M}_i$. The norm $\|\;\|^{\mathbb{E}_i}$ on $\mathbb{E}_i$  induces a norm $\|\;\|^{\mathbb{F}'_i}$ on the Banach subspace $\mathbb{F}'_i$ and  we get a natural graduation $(\nu'_i)$ on $\mathbb{F'}$  given by $\nu'_i(u)=||\ell_i(u)||^{\mathbb{F}'_i}$  (cf.  Appendix 
\ref{__TheFrechetSpaceHF1F2-TheBanachSpaceHbF1F2} (\ref{eq_SemiNormOnFrechetAssociatedToNormOnBanach})). In the same way, the norm  $\|\;\|^{\mathbb{M}_i}$ 
induces a graduation $(\mu_i)$ on $\mathbb{M}$ given by $\mu_i(v)=||d_i(v)||^{\mathbb{M}_i}$. Now (\ref{eq_r'iC}) implies that $r'$ is uniformly bounded and $r'(\mathbb{F}')=r(\mathbb{E})=\Delta_x$ is closed by 
assumption. Therefore, the proof of Lemma \ref{L_kerrhox} is complete.\\		 
\end{proof}

\textit{Step 2:  There exists a neighbourhood  $V\subset U$ of $x$ such that the map $\rho'=\rho_{| U\times \mathbb{F}'}$ takes values in $\mathcal{IH}_b(\mathbb{F}',\mathbb{M})  $\footnote{ cf. Appendix \ref{__TheFrechetSpaceHF1F2-TheBanachSpaceHbF1F2}} and is $K$-Lipschitz on $V$ for some $K>0$}.\\
	
Since $E_i=U_i\times \mathbb{E}_i$, it follows that  $E=U\times \mathbb{E}$ and so $\rho:E\to TU$ can be seen as a smooth map from $U$ into $\mathcal{H}(\mathbb{E},\mathbb{M})$. Let $\rho'$ be the restriction of $\rho$ to $U\times \mathbb{F}'$ and so consider $\rho'$ as a smooth map from $U$ to  $\mathcal{H}(\mathbb{F}',\mathbb{M})$.  From  the definition of a Finsler norm, Assumption 2 in Definition \ref{D_SubmersiveProjectiveLimitOfLocalBanachBundles} and    Lemma \ref{L_kerrhox},  the map $x\mapsto\rho'_x$  takes value in $\mathcal{H}_b(\mathbb{F}',\mathbb{M})$. 

\begin{lemma}
	\label{L_rho'lipschitz}
	There exists a neighbourhood  $V_1\subset V$ of $0$  such that the map $\rho':V_1\to \mathcal{H}_b(\mathbb{F}',\mathbb{M})$  is  Lipschitz, that is:
	
	there exists $K>0$ such that	
		$$\hat{\mu}_i^{op}(\rho'_z-\rho'_x)\leq K \hat{\mu}_i(z-z'),\; \forall (z,z')\in V_1^2$$\footnote{ cf. Remark \ref{R_SeminormEquivalentpn}}
\end{lemma}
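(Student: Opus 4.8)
The plan is to prove the estimate \emph{level by level}, with a Lipschitz constant that does not depend on $i$, and then to read it off on the graded seminorms. In the trivializations fixed above, each $\rho_i$ is identified with a smooth map $U_i\to\mathcal{L}(\mathbb{E}_i,\mathbb{M}_i)$, $z_i\mapsto\rho_{i,z_i}$, and $\mathbb{F}'_i$ is a closed Banach subspace of $\mathbb{E}_i$; hence, by the very definition of the graded operator seminorm on $\mathcal{H}(\mathbb{F}',\mathbb{M})$ (Appendix \ref{__TheFrechetSpaceHF1F2-TheBanachSpaceHbF1F2}), and since we already know that $z\mapsto\rho'_z$ takes values in $\mathcal{H}_b(\mathbb{F}',\mathbb{M})$,
\[
\hat{\mu}_i^{\operatorname{op}}(\rho'_z-\rho'_{z'})=||(\rho_{i,z_i}-\rho_{i,z'_i})_{|\mathbb{F}'_i}||^{\operatorname{op}}_i\leq ||\rho_{i,z_i}-\rho_{i,z'_i}||^{\operatorname{op}}_i\quad\text{and}\quad \hat{\mu}_i(z-z')=||z_i-z'_i||^{\mathbb{M}_i}.
\]
So it is enough to exhibit $r_0>0$ and $K>0$, \emph{both independent of $i$}, with $||\rho_{i,z_i}-\rho_{i,z'_i}||^{\operatorname{op}}_i\leq K\,||z_i-z'_i||^{\mathbb{M}_i}$ for all $z_i,z'_i\in B_{\mathbb{M}_i}(0,r_0)\cap U_i$. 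Applying the mean value inequality to the smooth Banach-space valued maps $\rho_i$, this is a consequence of the uniform bound $\sup_i\sup\{\,||D\rho_i(z_i)||^{\operatorname{op}}:z_i\in B_{\mathbb{M}_i}(0,r_0)\cap U_i\,\}\leq K$.

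The real content is this uniform bound on the differentials, and it is exactly here that the assumptions of Definition \ref{D_SubmersiveProjectiveLimitOfLocalBanachBundles} are needed. Differentiating the trivialization compatibility (\ref{eq_CompatibleFdt}) together with the anchor compatibility built into a submersive projective sequence of anchored bundles (the conditions of Definition \ref{D_ProjectiveSequenceofBanachLieAlgebroids}) shows that the differentials $D\rho_i$ are themselves compatible through the connecting maps $\ell_i^j$ and $d_i^j$, whose operator norms are controlled uniformly in $i$ by the Finsler data of assumption (3); so a bound at a high level of the tower propagates downwards, and what must be secured is that it does not deteriorate as $i$ grows. This is where the uniform constant $C$ of assumption (3) enters: combined with the uniform equivalence — on a ball of a common radius about $0$, legitimate because the Finsler structures form a compatible projective family — between the Finsler norms $||\;||^{E_i},||\;||^{M_i}$ and the fixed norms $||\;||^{\mathbb{E}_i},||\;||^{\mathbb{M}_i}$ of Lemma \ref{L_kerrhox}, it caps $||D\rho_i||^{\operatorname{op}}$ on $B_{\mathbb{M}_i}(0,r_0)$ by a quantity no longer depending on $i$. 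Alternatively, one may use the Lie algebroid morphism identity $\rho_i([\sigma,\sigma']_i)=[\rho_i\circ\sigma,\rho_i\circ\sigma']$, evaluated on trivialization-constant sections, which rewrites the relevant antisymmetric part of $D\rho_i$ in terms of the anchor (bounded by $C$) and of the brackets of a fixed projective family of sections, these being uniformly bounded since $(E_i,\pi_i,U_i,\rho_i,[.,.]_i)$ is a \emph{submersive} projective sequence of Banach-Lie algebroids. I expect this simultaneous control over all levels to be the main obstacle: at a single Banach level it is just smoothness of the anchor, as in the proof of Theorem~1 of \cite{Pe}, while the \emph{uniformity} in $i$ is precisely where passing from the Banach to the Fr\'echet setting requires the extra structure of (*).

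It then remains to assemble the statement. Put $V_{1,i}:=B_{\mathbb{M}_i}(0,r_0)\cap U_i$; after shrinking the $U_i$ if necessary so that $\delta_i^j(V_{1,j})\subset V_{1,i}$ for $j\geq i$, the family $(V_{1,i})$ is a projective system and $V_1:=\underleftarrow{\lim}V_{1,i}$ is an open neighbourhood of $0$ contained in $V$. For $z=\underleftarrow{\lim}z_i$ and $z'=\underleftarrow{\lim}z'_i$ in $V_1$, the identifications of the first paragraph turn the level-wise estimate into $\hat{\mu}_i^{\operatorname{op}}(\rho'_z-\rho'_{z'})\leq K\,\hat{\mu}_i(z-z')$ for every $i$, which is the assertion of the lemma; taking the supremum over $i$ also shows that $\rho'\colon V_1\to\mathcal{H}_b(\mathbb{F}',\mathbb{M})$ is $K$-Lipschitz for the Banach norm of $\mathcal{H}_b(\mathbb{F}',\mathbb{M})$, which is the form used in the ODE argument of Appendix \ref{ExistenceODE}.
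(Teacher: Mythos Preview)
Your approach diverges from the paper's in a way that introduces a real gap. You work level by level and reduce the lemma to a uniform bound $\sup_i\sup_{z_i}\|D\rho_i(z_i)\|^{\operatorname{op}}\leq K$, and then assert that assumption~(3) of Definition~\ref{D_SubmersiveProjectiveLimitOfLocalBanachBundles} ``caps $\|D\rho_i\|^{\operatorname{op}}$''. But assumption~(3) is a uniform bound on the \emph{values} $\|(\rho_i)_{z_i}\|^{\operatorname{op}}\leq C$, not on the \emph{derivatives} $D\rho_i$; a uniformly bounded family of smooth Banach-valued maps need not have uniformly bounded differentials. Neither the projective compatibility of the $D\rho_i$ (which only says bounds propagate \emph{downwards} through the tower, not that they stay finite as $i\to\infty$) nor the Lie-algebroid identity (which, on constant sections, controls only the antisymmetric combination $D\rho_i(\rho_i u)v-D\rho_i(\rho_i v)u$, and only for directions in the range of $\rho_i$) closes this gap. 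Your sentence ``I expect this simultaneous control over all levels to be the main obstacle'' is correct, but the two mechanisms you offer do not actually deliver it.

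The paper sidesteps this difficulty by not working level by level. It regards $\rho'$ as a smooth map from the Fr\'echet open set $U$ into the \emph{Banach} space $\mathcal{H}_b(\mathbb{F}',\mathbb{M})$; then, at each $x$, the differential $d_x\rho'$ is a continuous linear map from the Fr\'echet space $\mathbb{M}$ into a Banach space, and hence factors through some seminorm $\hat{\mu}_{i_0}$ with constant $A_x$. For the finitely many indices $i<i_0$ one has bounds $C_x^i$ automatically, so $d_x\rho'\in\mathcal{H}_b(\mathbb{M},\mathcal{H}_b(\mathbb{F}',\mathbb{M}))$ with $\|d_x\rho'\|_\infty<\infty$. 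Setting $C=\|d_0\rho'\|_\infty$ at the single point $x=0$, a continuity argument gives $\|d_x\rho'\|_\infty\leq 2C$ on a neighbourhood $V_1$, and $K=2C$ is the desired constant. In short, the paper obtains the uniform-in-$i$ control you are after as a \emph{consequence} of viewing $\rho'$ with values in a Banach space and invoking continuity at one point, rather than trying to manufacture it from assumption~(3) at each Banach level.
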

	
\proof{\it  of Lemma \ref{L_rho'lipschitz}}
Let $( \hat{\nu'}_i)_{i \in \mathbb{N}} $ (resp. $\left( \hat{\mu}_i\right)_{i \in \mathbb{N}} $) be the canonical increasing graduation associated to the graduation $(\nu'_i)_{i \in \mathbb{N}}$ on $\mathbb{M}$ (resp. $\left( \hat{\mu}_i\right)_{i \in \mathbb{N}} $) (cf. Appendix (\ref{__TheFrechetSpaceHF1F2-TheBanachSpaceHbF1F2} \ref{eq_SemiNormOnFrechetAssociatedToNormOnBanach})).
Since the map $x\mapsto \rho_x'$ is a smooth map from $U$ to $\mathcal{H}_b(\mathbb{F}',\mathbb{M})$ it follows that for each $x$ the differential map $d_x\rho'$ is a continuous linear map from $\mathbb{M}$ to the Banach space $\mathcal{H}_b(\mathbb{F}',\mathbb{M})$ and so there exists $i_0\in \mathbb{N}$  and a constant $A_x>0$ such that 
\begin{equation}\label{boundrho1}
\|d_x\rho'(u)\|_\infty\leq  A_x \;\hat{\nu'}_{i_0}(u)
\end{equation}
for all $u\in \mathbb{F}'$ and so 
\begin{equation}\label{boundrho2}
\|d_x\rho'(u)\|_\infty\leq A_x\;\hat{\nu'}_{i}(u)
\end{equation}
for all $u\in \mathbb{F}'$ and  and $i\geq i_0$ and  according to Remark \ref{R_SeminormEquivalentpn} we set
\begin{equation}\label{normrhoop}
||d_x\rho'||_i^{op}:=\sup\{\hat{\mu_i}(d_x \rho'(u)):\; \hat{\nu'}_i(u)\leq 1\}\leq A_x,\;\forall i\geq i_0
\end{equation}
On the other hand, for $1\leq i<i_0$ we set $C_x^i=||d_x\rho'||_i^{op}$.  Then   $d_x\rho'$ belongs to $\mathcal{H}_b\left(\mathbb{M},\mathcal{H}_b(\mathbb{F}',\mathbb{M})\right)$ for all $x\in U$ since  we have 
$$||d_x\rho'||_\infty:= \sup_{i\in \mathbb{N}}||d_x\rho'||_i^{op}\leq\sup\{A_x, C_x^1,\cdots, C_x^{i_0-1}\}.$$
We set  $C=||d_0\rho'||_\infty$. By continuity, there exists an open neighbourhood $V_1$ of $0$ such that
\begin{equation}\label{norminfinirho}
 ||d_x\rho'||_\infty\leq 2C
\end{equation}
 By choosing $K=2C$,  from the definition of $||d_x\rho'||_\infty$  it implies the announced result.

\endproof

\textit{Step 3: Local flow   of the vector field $X_u=\rho'(u)$}.\\
Consider a neighbourhood $V$ as announced  in step 2. As $\delta_i$ is surjective, $V_i=\delta_i(V)$ is an open set of $U_i$ and so we have $V=\underleftarrow{\lim}V_i$. For each $u\in \mathbb{F}'$, let $X_u=\rho'(u)$ be the vector field on $V$. If $u=\underleftarrow{\lim}u_i$ with $u_i\in \mathbb{F}'_i$, then $X_{u_i}=\rho'_i(u_i)$ is a vector field on $V_i$ and $X_u=\underleftarrow{\lim} X_{u_i}$. Now since $\rho'$ takes values in $\mathcal{IH}_b(\mathbb{F}',\mathbb{M})$,  from our assumption,  there exists a constant $C>0$ such that  $||(\rho'_i)_{z_i}||^{\operatorname{op}}_i\leq C$ for all $z_i\in V_i$. Therefore, if $u$ belongs to $\mathbb{F}'$  and $u_i=\lambda(u)$, then $\|X_{u_i}\|^{\mathbb{M}_i}\leq C \|u\|^\mathbb{E}_i$ and, from Lemma \ref{L_rho'lipschitz} and the definition of $\hat{\mu}_i^{\operatorname{op}}$  we have
\begin{eqnarray}
\label{eq_LipschitzXui}
	\forall \left( x_i,\; x'_i\right) \in \left( \delta_i(V)\right) ^2, \; \|X_{u_i}(x_i)-X_{u_i}(x'_i)\|^{\mathbb{M}_i} \leq K \|u_i\|^{\mathbb{E}_i}||x_i- x'_i||^{\mathbb{M}_i}.
\end{eqnarray}
Now,  recall that we have  provided $\mathbb{F}'$ and  $\mathbb{M}$  with  seminorms $(\hat{\nu'}_n)$ and $(\hat{\mu}_n)$ respectively defined by
	\[
	{\nu}_n(u)= 
	 \|\lambda_i(u)\|^{\mathbb{E}_i} \; \textrm{ and }{\mu}_n (x)=
	  \|\delta_i(x)\|^{\mathbb{M}_i}.
	\]	
Since $\delta_i(X_{u})(x)=X_{u_i}(\delta_i(x))$ and in this way, from (\ref{eq_LipschitzXui}), for all $n\in \mathbb{N}$,  we have
\begin{eqnarray}
\label{eq_LipschitzXu}
	 \forall (x,x')\in V^2, \; {\mu}_n(X_{u}(x)-X_{u}(x'))\leq K {\nu}_n(u){\mu}_n(x-x').
\end{eqnarray}
Therefore, $X_u$ satisfies the assumption of Corollary \ref{C_ODEVectorFields}. Let $\epsilon>0$ such that the pseudo-ball
	\[
B_\mathbb{M}(0,2\epsilon)=	\{x\in \mathbb{M},\;:\; \hat{\mu}_{n_i}(x)<2\epsilon, 1\leq i\leq k\}
	\]
is contained in $V$ and set
\begin{itemize}
\item[]
$C_1:=\displaystyle\max_{1\leq i\leq k} \{K\nu_{n_i}(u)\}=K \displaystyle\max_{1\leq i\leq k}{\nu}_{n_i}(u)$;
\item[]
$C_2:=\displaystyle\sup_{x \in B_{\mathbb{M}}(0,\epsilon)}\left\{\max_{1\leq i\leq k}\mu_{n_i}(X(x))\right\}\leq C \displaystyle\max_{1\leq i\leq k}{\nu}_{n_i}(u) $.
\end{itemize}	
By application of Corollary \ref{C_ODEVectorFields}, for any $u$ such that $ \displaystyle\max_{1\leq i\leq k}{\nu}_{n_i}(u)\leq 1$, there exists  $\alpha>0$ such that
	\[
	\alpha e^{2\alpha K}\leq \frac{\epsilon}{2C},
	\]
 such that the local flow $\operatorname{Fl}^{u}_t$ is defined on the pseudo-ball $B_\mathbb{M}(0,\epsilon)$ for all $t\in [-\alpha,\alpha]$ for all $u$ which satisfied the previous inequality

Note that for any $s\in \mathbb{R}$ we have $X_{su}=sX_u$.  Therefore, from the classical properties of a flow of a vector field,  

 there exists $\eta>0$ such that  the local flow  $\operatorname{Fl}^{u}_t$  is defined on $[-1,1]$, for all $u $ in  the open pseudo-ball
	\[
	B_{\mathbb{F}'}(0,\eta):=\left\{u\in \mathbb{F},\;:\;\ \; {\nu}'_{n_i}(u)\leq \eta,\;\; 1\leq  i\leq k\right\}.
	\]
	(cf. for instance proof of Corollary 4.2 in \cite{ChSt}).\\
We set $B_{\mathbb{M}_i}(0,\epsilon)=\delta_i(B_\mathbb{M}(0,\epsilon))$. Then $X_{u_i}$ is a vector field on $V_i=\delta_i(V)$ and
 $\operatorname{Fl}^{u_i}_t:=\delta_i\circ(\operatorname{Fl}^{u}_t)\circ \delta_i$  is the local flow of $X_{u_i}$ which is defined on  $B_{\mathbb{M}_i}(0,\epsilon)$ for all $t \in [-1,1]$ and
 $\operatorname{Fl}^{u_i}_t (x_i)$ belongs to $V_i$  for all $x_i\in B_{\mathbb{M}_i}(0,\epsilon)$ and $t\in [-1,1]$ and, from (\ref{projlimflt}), we have
\begin{equation}\label{projlimflt}
\operatorname{Fl}^{u}_t=\underleftarrow{\lim}\operatorname{Fl}^{u_i}_t.\\
\end{equation}
\newline
\bigskip	
\textit{Step 4: Existence of an integral manifold}.\\
Since $(E_{i},\pi_{i},U_{i},\rho_{i},[.,.]_{i})$ is a  Banach-Lie algebroid, the Lie bracket $[X_{u_i}, X_{u'_i}]$ is tangent to $\Delta_i$ and so by Definition 3.2 and  Lemma 3.6 in  \cite{Pe}  we have
	\[
	\forall t\in [-1,1], \; (T\operatorname{Fl}^{u_i}_t)((\Delta_i)_{x_i})=(\Delta_i)_{\operatorname{Fl}^{u_i}_t(x_i)}.
	\]
Therefore, according to the notations at the end of step 3, for each $i\in \mathbb{N}$, we set:
 \[
 \forall u_i\in B_{\mathbb{F}'_i}(0,\eta)=\lambda_i(B_{\mathbb{F}'}(0,\eta)), \; \Phi_i(u_i)=(\operatorname{Fl}^{u_i}_1)(0);
 \]
 \[
 \forall u\in B_{\mathbb{F}'}(0,\eta), \; \Phi(u)=(\operatorname{Fl}^{u}_1)(0);.
 \]
	
\begin{lemma}
\label{L_closedFrechet}
$\Phi=\underleftarrow{\lim}\Phi_i$ is smooth and there exists $0<\eta'\leq \eta$  such that the restriction of $\Phi$ to $B_{\mathbb{F}'}(0,\eta')$ is injective and $T_u\Phi$ belongs to $\mathcal{IH}_b(\mathbb{F}',\mathbb{M})$ for all $u\in B_{\mathbb{F}'}(0,\eta')$.
\end{lemma}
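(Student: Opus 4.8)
The plan is to build the map $\Phi$ as a projective limit of the classical "leaf parametrizations" $\Phi_i$ from the Banach case (as in the proof of Theorem 1 in \cite{Pe}), and then verify the three assertions — smoothness, local injectivity, and the property $T_u\Phi\in\mathcal{IH}_b(\mathbb{F}',\mathbb{M})$ — by combining the uniform bounds obtained in Steps 1–3 with the compatibility between the flows $\operatorname{Fl}^{u_i}_t$ established in (\ref{projlimflt}). First I would note that each $\Phi_i(u_i)=(\operatorname{Fl}^{u_i}_1)(0)$ is a smooth map on the Banach pseudo-ball $B_{\mathbb{F}'_i}(0,\eta)$ by smooth dependence of ODE solutions on initial data and parameters in Banach spaces, and that the identity $\delta_i^j\circ\operatorname{Fl}^{u_j}_t=\operatorname{Fl}^{u_i}_t\circ\delta_i^j$ (which follows from $X_u=\underleftarrow{\lim}X_{u_i}$ and uniqueness of flows) gives $\delta_i^j\circ\Phi_j=\Phi_i\circ\hat\delta_i^j$; hence $\Phi=\underleftarrow{\lim}\Phi_i$ is well defined, and it is smooth as a projective limit of smooth maps between projective limits of Banach manifolds (using the Fréchet manifold structure recalled in Appendix \ref{_ProjectiveLimits}).

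Next I would compute the differential. In the Banach setting one has the standard fact that $T_0\Phi_i = (r'_i)$ up to an isomorphism — more precisely $T_0\Phi_i(u_i)=X_{u_i}(0)=\rho'_i(u_i)$, so $T_0\Phi_i=r'_i$, which is an isomorphism of $\mathbb{F}'_i$ onto its closed range $\mathbb{F}_i=(\Delta_i)_{x_i}$. Passing to the limit, $T_0\Phi=\underleftarrow{\lim}r'_i=r'$, which by Lemma \ref{L_kerrhox} is a uniformly bounded injective operator with closed range, i.e. $T_0\Phi\in\mathcal{IH}_b(\mathbb{F}',\mathbb{M})$. For general $u$ near $0$, I would use the variational equation: $T_u\Phi$ is obtained from $r'$ by composing with the tangent flow along the integral curve, and since each $\operatorname{Fl}^{u_i}_t$ and its tangent map $T\operatorname{Fl}^{u_i}_t$ are uniformly bounded for $i\in\mathbb{N}$, $t\in[-1,1]$ and $u_i$ in a fixed pseudo-ball — this being exactly the content one extracts from the uniform Lipschitz bound (\ref{eq_LipschitzXui}) and the properties of $\mathcal{H}_b$ in Appendix \ref{__TheFrechetSpaceHF1F2-TheBanachSpaceHbF1F2} — the limit $T_u\Phi$ stays in $\mathcal{IH}_b(\mathbb{F}',\mathbb{M})$ for $u$ in a possibly smaller pseudo-ball $B_{\mathbb{F}'}(0,\eta')$, by openness of $\mathcal{IH}_b$ inside $\mathcal{H}_b$ and continuity of $u\mapsto T_u\Phi$.

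For injectivity I would argue at the Banach level first: for each $i$, since $T_0\Phi_i=r'_i$ is a toplinear isomorphism onto $\mathbb{F}_i$ and $T_{x_i}N_i=(\Delta_i)_{x_i}=\mathbb{F}_i$, the inverse function theorem gives a radius $\eta_i'$ on which $\Phi_i$ is injective (this is the classical step producing the leaf $N_i$). The delicate point is that the $\eta_i'$ must not shrink to $0$ as $i\to\infty$; here is where I expect the main obstacle. The resolution is to use the uniform Lipschitz constant $K$ from Lemma \ref{L_rho'lipschitz} and the uniform bound $C$ on $\|r'_i\|^{\operatorname{op}}_i$ from (\ref{eq_r'iC}): a quantitative inverse function theorem (or a direct fixed-point/Lipschitz-perturbation estimate of the form $\|\Phi_i(u_i)-\Phi_i(u_i')-r'_i(u_i-u_i')\|\le \tfrac12\|r'_i\|\,\|u_i-u_i'\|$ on a ball whose radius depends only on $K$, $C$ and the uniform lower bound on $\|r'_i(\cdot)\|$, not on $i$) yields a common $\eta'>0$. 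Then $\Phi=\underleftarrow{\lim}\Phi_i$ restricted to $B_{\mathbb{F}'}(0,\eta')$ is injective because each component is, and compatibility of the projections forces a limit point to be determined by its components. This establishes all three claims of Lemma \ref{L_closedFrechet}.

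Finally, I would package the estimates so that $\eta'$ can be taken simultaneously small enough for (i) the injectivity estimate above, (ii) $T_u\Phi\in\mathcal{IH}_b(\mathbb{F}',\mathbb{M})$ (openness), and (iii) the image $\Phi(B_{\mathbb{F}'}(0,\eta'))$ staying inside the chart domain $V$; since there are only finitely many conditions, taking the minimum of the corresponding radii gives the desired $\eta'$, and smoothness of $\Phi$ has already been checked. The main work — and the only genuinely infinite-dimensional subtlety — is the uniformity in $i$ of the constants, which is precisely why assumption (3) in Definition \ref{D_SubmersiveProjectiveLimitOfLocalBanachBundles} (the uniform operator bound $\|(\rho_i)_{z_i}\|^{\operatorname{op}}_i\le C$) was imposed.
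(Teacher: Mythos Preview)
Your treatment of smoothness and of $T_u\Phi\in\mathcal{IH}_b(\mathbb{F}',\mathbb{M})$ tracks the paper closely: the paper carries out the variational-equation computation explicitly (its Claim~\ref{Cl_BoundDPhi}), obtaining via Gronwall a bound $\|T_{u_i}\Phi_i\|_i^{\operatorname{op}}\le M_1=Me^{K}$ independent of $i$, then shows $u\mapsto T_u\Phi$ lands in $\mathcal{H}_b(\mathbb{F}',\mathbb{M})$ and is Lipschitz near $0$, and invokes $T_0\Phi=\rho'_0$ together with the openness of $\mathcal{IH}_b$ (Proposition~\ref{P_InjectiveSurjectiveBH}). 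So far your outline and the paper agree.

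The injectivity of $\Phi$ is where you diverge, and where there is a gap. You propose a quantitative local-injectivity estimate at each Banach level, with a common radius depending on $K$, $C$, and ``the uniform lower bound on $\|r'_i(\cdot)\|$''. But nothing in Definition~\ref{D_SubmersiveProjectiveLimitOfLocalBanachBundles} or Lemma~\ref{L_kerrhox} furnishes a bound $\|r'_i(v)\|^{\mathbb{M}_i}\ge c\,\|v\|^{\mathbb{E}_i}$ with $c>0$ independent of $i$; assumption~(3) gives only the uniform \emph{upper} bound. Without that coercivity your Banach radii $\eta'_i$ may shrink to $0$ and no common $\eta'$ emerges. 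The paper avoids running the IFT level-by-level: it argues by contradiction directly on the Fr\'echet map, taking $u_k\neq v_k$ in $B_{\mathbb{F}'}(0,\eta/k)$ with $\Phi(u_k)=\Phi(v_k)$, applying Rolle's theorem to the scalar curves $t\mapsto\langle\alpha,\Phi(u_k+th_k)\rangle$ for $\alpha\in\mathbb{M}^*$, using the Lipschitz control on $u\mapsto T_u\Phi$ to estimate $|\langle\alpha,T_0\Phi(h_k)\rangle|$, and producing the contradiction via Hahn--Banach and surjectivity of the adjoint $T_0^*\Phi_i$. A two-sided estimate (the paper's (\ref{eq_TPhiGlobalBounded}), itself drawn from (\ref{eq_Lisobounded})) does enter, but only at the final adjoint step, not as the mechanism that produces a uniform injectivity radius. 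If you want your direct route to work you must first prove that $\rho'_0\in\mathcal{IH}_b(\mathbb{F}',\mathbb{M})$ forces $\inf_i\inf_{\|v\|^{\mathbb{E}_i}=1}\|r'_i(v)\|^{\mathbb{M}_i}>0$; that is a separate claim and is not immediate from the data.
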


Recall that, for each $i\in \mathbb{N}$,  $B_{\mathbb{F}'_i}(0,\eta'))$ is an open ball in $\mathbb{F}'_i$ and  $\delta_i(V)$ is  open neighbourhood of $0\in \mathbb{M}_i$.   From Lemma \ref{L_closedFrechet},  since $\Phi$ is injective and each differential $T_u\Phi$ is injective, it follows that the same is true for each $\Phi_i: B_{\mathbb{F}_i}(0,\eta'))\to \delta_i(V)$. Thus we can apply the proof of Theorem 1 in \cite{Pe}  for $\Phi_i$. By the way,  $\mathcal{U}_i=\Phi_i(B_{\mathbb{F}'_i}(0,\eta'))$ is an integral manifold of $\rho_i(\mathbb{F}'_i)$  and $(\mathcal{U}_i, \Phi_i^{-1})$ is a (global) chart for this integral manifold modeled on $\mathbb{F}'_i$. As $ \Phi=\underleftarrow{\lim}\Phi_i$, $B_{\mathbb{F}'}(0,\eta')=\underleftarrow{\lim}B_{\mathbb{F}'_i}(0,\eta'))$ $\Phi_i\circ \lambda_i^j=\delta_i^j\circ \Phi_j$ for all $j\geq i$, it follows that $(\mathcal{U}_i, \delta_i^j)_{j\geq i}$ is a surjective  projective sequence and so $\mathcal{U}=\underleftarrow{\lim}\mathcal{U}_i$ is a Fr\'echet manifold modeled on $\mathbb{F}'$. This last result clearly {\bf ends the proof of Theorem \ref{T_IntegrabilitySPLBanachBundles}}.	\\

\bigskip
	
\begin{proof}{\it of Lemma \ref{L_closedFrechet}}${}$\\
According to step 3, $\Phi_i$ is well defined on $B_{\mathbb{F}'_i}(0,\eta)$ and
$\Phi=\underleftarrow{\lim}\Phi_i$.\\
Now, for every $ x\in V$,  $\rho'_x$  belongs to $\mathcal{IH}_b(\mathbb{F}',\mathbb{M})$ and so is injective; after shrinking $V$, if necessary, there exists some $M>0$ such that 		
\begin{eqnarray}
\label{eq_boundrho'}
\forall x\in V, \; \|\rho'_x\|_\infty\leq M.
\end{eqnarray}
Now, by construction, we have  $T_0\Phi(u)=\rho'_0(u)$ and so $T_0\Phi$  is injective.
		
\begin{claim}
\label{Cl_BoundDPhi}
The map $u\mapsto T_u\Phi$ is a smooth map from $B'(0,\eta )$ to $\mathcal{H}_b(\mathbb{F}',\mathbb{M})$.\\
\end{claim}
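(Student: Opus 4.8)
The plan is to identify $T_u\Phi$ with the time-one value of the variational equation of the flow $\operatorname{Fl}^u_t$ along the trajectory issued from $0$, and then to read off both the $\mathcal{H}_b$-membership and the smoothness from the uniform estimates of Step~2 together with the Banach-algebra structure of $\mathcal{H}_b(\mathbb{M},\mathbb{M})$ recalled in Appendix~\ref{__TheFrechetSpaceHF1F2-TheBanachSpaceHbF1F2}.

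First I would set $\gamma_u(t)=\operatorname{Fl}^u_t(0)$, so that $\dot\gamma_u(t)=\rho'_{\gamma_u(t)}(u)$, $\gamma_u(0)=0$ and $\Phi(u)=\gamma_u(1)$. Differentiating the flow with respect to the parameter $u$ in a direction $v\in\mathbb{F}'$ --- which is legitimate by the smooth dependence on parameters of the solutions furnished by Corollary~\ref{C_ODEVectorFields}, applied at each level $i$ and passed to the projective limit --- the curve $w_v(t):=D_u\gamma_u(t)\cdot v\in\mathbb{M}$ solves the linear inhomogeneous equation
\begin{equation*}
\dot w_v(t)=A_u(t)\,w_v(t)+\rho'_{\gamma_u(t)}(v),\qquad w_v(0)=0,
\end{equation*}
where $A_u(t)\colon\mathbb{M}\to\mathbb{M}$ is the operator $h\mapsto (d_{\gamma_u(t)}\rho'(h))(u)$, and $T_u\Phi(v)=w_v(1)$. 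By the Lipschitz estimate \eqref{norminfinirho} of Step~2 one has $A_u(t)\in\mathcal{H}_b(\mathbb{M},\mathbb{M})$ with $\|A_u(t)\|_\infty$ controlled by the graded size of $u$; in particular $t\mapsto A_u(t)$ is a smooth curve in the Banach space $\mathcal{H}_b(\mathbb{M},\mathbb{M})$.

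Let $R_u(t)$ be the resolvent, the solution of $\dot R_u(t)=A_u(t)\circ R_u(t)$ with $R_u(0)=\operatorname{Id}$; since this is a linear ODE in the Banach algebra $\mathcal{H}_b(\mathbb{M},\mathbb{M})$ with right-hand side smooth in $t$ and in $u$, the solution $R_u(t)$ exists for all $t\in[-1,1]$, lies in the Banach-Lie group $\mathcal{IH}_b(\mathbb{M},\mathbb{M})$, and depends smoothly on $u$; a Gronwall estimate based on the uniform bound on $A_u$ keeps $\|R_u(t)\|_\infty$ and $\|R_u(t)^{-1}\|_\infty$ uniformly bounded for $u$ in a fixed pseudo-ball. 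Variation of constants then gives
\begin{equation*}
T_u\Phi=\int_0^1 R_u(1)\circ R_u(s)^{-1}\circ\rho'_{\gamma_u(s)}\,ds .
\end{equation*}
Each integrand is the composite of a uniformly bounded automorphism of $\mathbb{M}$ with $\rho'_{\gamma_u(s)}\in\mathcal{H}_b(\mathbb{F}',\mathbb{M})$, hence lies in $\mathcal{H}_b(\mathbb{F}',\mathbb{M})$, the composition map $\mathcal{IH}_b(\mathbb{M},\mathbb{M})\times\mathcal{H}_b(\mathbb{F}',\mathbb{M})\to\mathcal{H}_b(\mathbb{F}',\mathbb{M})$ being bounded bilinear by Appendix~\ref{__TheFrechetSpaceHF1F2-TheBanachSpaceHbF1F2}; using also \eqref{eq_boundrho'}, the integral is a norm-bounded integral of $\mathcal{H}_b(\mathbb{F}',\mathbb{M})$-valued maps, so $T_u\Phi\in\mathcal{H}_b(\mathbb{F}',\mathbb{M})$. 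Smoothness of $u\mapsto T_u\Phi$ then follows by composing smooth maps: $u\mapsto\gamma_u(\cdot)$ and $u\mapsto A_u(\cdot)$ (smooth parameter-dependence of ODE solutions), $x\mapsto\rho'_x$ (Step~2), $u\mapsto R_u(\cdot)$ (smoothness of the solutions of the linear ODE in $\mathcal{H}_b(\mathbb{M},\mathbb{M})$), inversion in the Banach-Lie group $\mathcal{IH}_b(\mathbb{M},\mathbb{M})$, the bounded bilinear composition map above, and integration over $[0,1]$.

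I expect the main obstacle to be the uniform control of the resolvent $R_u(t)$: one must show that the variational flow stays inside $\mathcal{H}_b(\mathbb{M},\mathbb{M})$, equivalently that the operator norms $\|R_{u_i}(t)\|^{\operatorname{op}}_i$ of its finite-level components are bounded independently of $i$ and of $u$ in the pseudo-ball, which is exactly where the uniform Lipschitz bound of Step~2 on $\|d_x\rho'\|_\infty$ enters a Gronwall argument adapted to the graduation; making precise the smooth parameter-dependence of the ODE solutions in the Fr\'echet / projective-limit setting, via Corollary~\ref{C_ODEVectorFields}, is the other point requiring care.
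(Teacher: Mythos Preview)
Your proposal is correct and follows essentially the same route as the paper: both identify $T_u\Phi$ with the time-one solution of the variational equation along the flow from $0$, express it by variation of constants through a fundamental solution/resolvent, and control that resolvent via Gronwall using the Lipschitz bound \eqref{norminfinirho} on $d\rho'$ from Step~2 together with the bound \eqref{eq_boundrho'} on $\rho'$. The only cosmetic difference is packaging: the paper carries out the estimate level-by-level (obtaining $\|T_{u_i}\Phi_i\|^{\operatorname{op}}_i\le Me^K$ uniformly in $i$ and then passing to the projective limit), whereas you run the same argument directly inside the Banach algebra $\mathcal{H}_b(\mathbb{M},\mathbb{M})$.
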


\smallskip 
\begin{proof}{\it  of Claim \ref{Cl_BoundDPhi}}${}$\\
We will use some argument of the proof of Lemma 2.12 of \cite{Pe}. We fix the index $i$ and, for any $y\in B_{\mathbb{F}'_i}(0,\epsilon )$, $v\in \mathbb{F}'_i$ we set
\begin{description}
\item[ ] $X_v(y)=\rho'_i(y, v)$
\item[ ] $\varphi(t,v) =\operatorname{Fl}^{X_v}_t(0)$
\item[ ] $A(t)= \partial_1 \rho'_i(\varphi(t,v),v)$ (partial derivative relative to the first variable)
\item[ ] $B(t)= \rho'_i(\varphi(t,v),.)$
\end{description}
Note that $A$ and $B$ are smooth fields on $[0,1]$ of operators in $\mathcal{L}(\mathbb{M}_i,\mathbb{M}_i)$ and $\mathcal{L}(\mathbb{F}'_i,\mathbb{M}_i)$ respectively. Therefore the differential equation
\[
\dot{S}= A\circ S+B
\]
has a unique solution $S_t$ with initial condition $S_0=0$ given by
\begin{eqnarray}
\label{eq_St}
S_t=\displaystyle\int_0^t G_{t-s}\circ B(s)ds
\end{eqnarray}
where $G_t$ is the unique solution of
\[
\dot{G}=A\circ G
\]
with initial condition $G_0=\operatorname{Id}_\mathbb{M}$. Given by
\begin{eqnarray}
\label{eq_Gt}
G_t=\operatorname{Id}_\mathbb{M}+\displaystyle\int_0^t A\circ G_s ds.
\end{eqnarray}
Under these notations, from  \cite{Die}, Chapter X $\S$ 7, we have
$\partial_2\varphi(t,v)(.)=S_t$ .\\

On the one hand, from the choice of $\epsilon$,  $\varphi(t,v)$ belongs to $V_i$ and so from (\ref{norminfinirho}), we have $\|A(t)\|^{\operatorname{op}}_i\leq K$ for any $t\in [-1,1]$ and from (\ref{eq_boundrho'}) we have $\|B(t)\|^{\operatorname{op}}_i\leq M$.
Thus from (\ref{eq_Gt}), using Gronwal equality,
we obtain
\[
\|G_t\|^{\operatorname{op}}_i\leq e^K,
\]
And so from (\ref{eq_St}) we obtain
\[
\|S_t\|^{\operatorname{op}}_i\leq Me^K
\]
This implies that
\[
\|\partial_2\varphi(t,v)\|^{\operatorname{op}}_i\leq Me^K.
\]
We set $M_1=Me^K$. Since $\Phi_i(u_i)=\varphi(1,u_i)$ it follows that:
\begin{eqnarray}\label{eq_TuiPhi}
\|T_{u_i}\Phi(v_i)\|^{\mathbb{M}_i}\leq M_1\|v_i\|^{\mathbb{E}_i}\end{eqnarray}

 from (\ref{eq_TuiPhi}) we obtain:

\begin{eqnarray}
\label{eq_muiTPhiu}
{\mu}_i(T_{u}\Phi(v))\leq M_1{\nu}_i(v).
\end{eqnarray}
But, since $T_u\Phi(\mathbb{F}')=\Delta_{\Phi(u)}\subset \{\Phi(u)\}\times \mathbb{M}$, it follows that the map $u\mapsto T_u\Phi$ can be considered as a continuous linear map from $\mathbb{F}'$ to $\mathbb{M}$ which takes values in $\mathcal{H}_b(\mathbb{F}',\mathbb{M})$. Now as  each $\Phi_i$ is a smooth map form $ B_{\mathbb{F}_i}(0,\eta'))$ to $\delta_i(V)$  and     $\Phi=\underleftarrow{\lim}\Phi_i$, this imply that $\Phi$ is a smooth map on $B_{\mathbb{F}'}(0,\eta'))=\underleftarrow{\lim} B_{\mathbb{F}'_i}(0,\eta'))$ to $V=\underleftarrow{\lim}\delta_i(V)$  which ends the proof of the Claim.

\end{proof}

\emph{End of the proof of Lemma \ref{L_rho'lipschitz}}.\\
At first, from Claim  \ref{Cl_BoundDPhi},  the map $u\mapsto T_{u}\Phi$ takes values in  the Banach space $\mathcal{H}_b(\mathbb{F}',\mathbb{M})$, as in step 2 for $\rho$, we can show that this map is Lipschitz  on $B_{\mathbb{F}'}(0,\eta)$  for $\eta$ small enough.
As $T_0\Phi=\rho'_0$, from Proposition \ref{P_InjectiveSurjectiveBH}, it follows that, again for $\eta$ small enough,  $T\Phi$ is injective on $B_{\mathbb{F}}(0,\eta')$, and we have (cf. (\ref{eq_TuiPhi})	)
\begin{eqnarray}
\label{eq_muiTPhiuMK}
\forall  u \in B_{\mathbb{F}}(0,\eta'), \; \forall v\in \mathbb{F}' \; {\mu}_i(T_{u}\Phi(v))\leq M_1 {\nu}_i(v)
\end{eqnarray}	
using the fact that the range of $T_u\Phi$ is always closed for $u\in B_{\mathbb{F}'}(0,\eta')$.
Moreover,  for $u\in B_{\mathbb{F}'}(0,\eta')$,  as for the relation (\ref{eq_Lisobounded}) in the proof of Proposition \ref{P_InjectiveSurjectiveBH}, we obtain:
\begin{eqnarray}
\label{eq_TPhiBounded}
		\frac{1}{\ell_u}{\nu}_i(v)\leq\mu_i(T_u\Phi(v)))\leq \ell_u.{\nu}_n(v)
\end{eqnarray}
for all $i\in \mathbb{N}$, where $\ell_u=\|T_u\Phi \|_\infty\leq M_1$.\\
Finally we obtain:
\begin{eqnarray}
\label{eq_TPhiGlobalBounded}
		\forall i \in \mathbb{N}, \; \frac{1}{M_1}{\nu}_i(v)\leq {\mu}_i(T_u\Phi(v))\leq M_1.{\nu}_i(u)
\end{eqnarray}
Suppose that, for any $0\leq \eta'\leq\eta$, the restriction of each $\Phi_i$  to $B_{\mathbb{F}'_i}(0,\eta)$ is not injective. Consider any pair $(u,v)\in[ B_{\mathbb{F}'}(0,\eta)]^2$ such that $u\not =v$ but $\Phi(u)=\Phi(v)$,
we set $h=v-u$.  For any $\alpha \in \mathbb{M}^*$, we consider the smooth curve $c_\alpha:[0,1]\to \mathbb{R}$ defined by:
		\[
		c_\alpha(t)=<\alpha ,(\Phi(u+th)-\Phi(u))>.
		\]
Of course, we have  $\dot{c}_{\alpha}(t)=<\alpha, T_{u+th}\Phi(h)>$.\\
Denote by $]u,v[$ the set of points $\{w=u+th, t\in ]0,1[\}$. As we have $c_\alpha(0)=c_\alpha(1)=0$, from Rolle's Theorem, there exists $u_\alpha\in ]u,v[$ such that
\begin{eqnarray}
\label{eq_AF2}
		<\alpha,\delta_l(T_{u_\alpha}\Phi(h))>=0
\end{eqnarray}
Note that, for any $t\in \mathbb{R}$,  this relation is also true  for any $ th$.
From our assumption, it follows that,  for each $k\in \mathbb{N}\setminus\{0\}$, there exists $u_k$ and $v_k$ in $B_{\mathbb{F}}(0,\frac{\eta}{k})$ so that $u_k\not =v_k$ but with $\Phi(u_k)=\Phi(v_k)$. So from the previous argument, for any  $\alpha\in\mathbb{M}^*$, we have

\begin{eqnarray}
\label{eq_AF2n}
		<\alpha,(T_{u_{\alpha,k}}\Phi(h_k))>=0
\end{eqnarray}
for some $u_{\alpha, k}\in]u_k,v_k[$ and  $h_k=v_k-u_k$.
 From (\ref{eq_AF2n}), for any  $i\in \mathbb{N}$, any $t\in \mathbb{R}$  and any  $ \alpha_i\in \mathbb{M}_i^*$, if $\alpha=\delta_i^*(\alpha_i)$, we have:
		\[
		|<\alpha,T_0\Phi(th_k)>|=|<\alpha_i,\delta_i([T_0\Phi-T_{u_{\alpha}}\Phi](th_k))>|=|<\alpha_i, [T_0\Phi_i-T_{\delta_i(u_{\alpha})}\Phi_i](\lambda_i(th_k))|
		\]
Denote by $\|.\|^{\mathbb{M}_i^*}$ the canonical norm on $\mathbb{M}_i^*$ associated to $\|.\|^{\mathbb{M}_i}$. Then from (\ref{eq_AF2n}) and since $u\mapsto T_u\Phi$ is $K_1$-Lipschitz on $B'(0,\eta)$ (for some constant $K_1$) we obtain:	
\begin{eqnarray}
\label{eq_InegalphaT0phi}
\begin{aligned}
|<\alpha,T_0\Phi(th_k)>|  &\leq  (||\alpha_i||^{\mathbb{M}_i^*}.K_1. ||\delta_i(u_{\alpha})||^{\mathbb{M}_i}||.||\lambda_i(th_k)||^{\mathbb{E}_i} \\
		 &\leq  (||\alpha_i ||^{\mathbb{M}_i^*}. K_1.||\lambda_i(th_k)||^{\mathbb{E}_i}\displaystyle\frac{ \eta}{k}. 
		 \end{aligned}
\end{eqnarray}
Since $u_k\not=v_k$, there must exist  at least  one integer $i\in \mathbb{N}$ such that $\lambda_i(h_k)\not=0$. Thus, by taking $t=\displaystyle\frac{u_k-v_k}{{\nu}_i((u_k-v_k))}$  in (\ref{eq_InegalphaT0phi}), we may assume $t=1$ and $\|\lambda_i(h_k)\|^{\mathbb{E}_i}=1$
In this way,  for this choice of  $h_k$,  we have a $1$-form  $\bar{\beta}_{k,i}$ on the linear  space generated by $\lambda_i(h_k)$ in $\mathbb{F}'_i$ such that
		$<\bar{\beta}_{k,i},\lambda_i(h_k)>=1$ and with $(\|\bar{\beta}_{k,i}\|^{\mathbb{E}_i})^*=1$. From the Hahn-Banach Theorem,  we can extend this linear form to a form $\beta_{k,i}\in \mathbb{M}_i^*$ such that $<\beta_{k,i},\lambda_i(h_k)>=1$ and $(\|\beta_{k,i}\|^{\mathbb{E}_i})^*=1$.  But since each $T_0\Phi$ is injective, this implies that $T_0\Phi_i$ is injective and so the adjoint $T^*_0\Phi_i$ is surjective. This implies that there exists $\alpha_{k,i}\in \mathbb{M}_i^*$ such that $T_0^*\Phi_i(\alpha_{k,i})=\beta_{k,i}$. Thus from (\ref{eq_InegalphaT0phi}) we obtain
\begin{eqnarray}\label{eq_inequality1Alpha}
\begin{matrix}
1=|<\beta_{k,i},\lambda_i(h_k)>|&=|<\alpha_{k,i}, T_0\Phi_i(\lambda_i(h_k))>|\hfill{}\\
                                                   &=|<\delta_i^*\alpha_{k,i}, T_0\Phi(h_k)>|\leq \|\alpha_{k,i}\|^{\mathbb{M}_i^*} K_1\displaystyle\frac{\eta}{k}\\
\end{matrix}
\end{eqnarray}
But, on the other hand since the operation "adjoint" is an isometry, from (\ref{eq_TPhiGlobalBounded}) we have
\begin{eqnarray}\label{eq_inequality2Alpha}
\displaystyle\frac{1}{M_1} \|\alpha_{k,i}\|^{\mathbb{M}_i^*} \leq \| T_0^*\Phi\alpha_{k,i}\|^{\mathbb{M}_i^*}=1
\end{eqnarray}
which gives a contradiction with (\ref{eq_inequality1Alpha}) for $k$ large enough.\\
\end{proof}
\bigskip

\appendix

\section{Projective limits}
\label{_ProjectiveLimits}

\subsection{Projective limits of topological spaces}
\label{__ProjectiveLimitsOfTopologicalSpaces}

\begin{definition}
\label{D_ProjectiveSequenceTopologicalSpaces}
A projective sequence of topological spaces\index{projective sequence!of topological spaces} is a sequence\\
 $\left( \left(  X_{i},\delta_{i}^{j}\right) \right)_{(i,j) \in \mathbb{N}^2,\; j \geq i}$ where

\begin{description}
\item[\textbf{(PSTS 1)}]
For all $i\in\mathbb{N},$ $X_{i}$ is a topological space;

\item[\textbf{(PSTS 2)}]
For all $\left( i,j \right)\in\mathbb{N}^2$ such that $j\geq i$,
$\delta_{i}^{j}:X_{j}\to X_{i}$ is a continuous map;

\item[\textbf{(PSTS 3)}]
For all $i\in\mathbb{N}$, $\delta_{i}^{i}={Id}_{X_{i}}$;

\item[\textbf{(PSTS 4)}]
For all $\left( i,j,k \right)\in\mathbb{N}^3$ such that $k \geq j \geq i$, $\delta_{i}^{j}\circ\delta_{j}^{k}=\delta_{i}^{k}$.
\end{description}
\end{definition}

\begin{notation}
\label{N_ProjectiveSequence}
For the sake of simplicity, the projective sequence $\left( \left(  X_{i},\delta_{i}^{j}\right) \right)_{(i,j) \in \mathbb{N}^2,\; j \geq i}$ will be denoted $\left(  X_{i},\delta_{i}^{j} \right) _{j\geq i}$.
\end{notation}

An element $\left(  x_{i}\right)  _{i\in\mathbb{N}}$ of the product
${\displaystyle\prod\limits_{i\in\mathbb{N}}}X_{i}$ is called a \emph{thread}\index{thread} if, for all $j\geq i$, $\delta_{i}^{j}\left(  x_{j}\right)=x_{i}$.

\begin{definition}
\label{D_ProjectiveLimitOfASequence}
The set $X=\underleftarrow{\lim}X_{i}$\index{$X=\underleftarrow{\lim}X_{i}$} of all threads, endowed with the finest topology for which all the projections $\delta_{i}:X\to X_{i} $ are continuous, is called the projective limit of the sequence\index{projective limit!of a sequence} $\left(  X_{i},\delta_{i}^{j} \right) _{j\geq i}$.
\end{definition}

A basis\index{basis!of a topology} of the topology of $X$ is constituted by the subsets $\left( \delta_{i} \right)  ^{-1}\left(  U_{i}\right)  $ where $U_{i}$ is an open subset of $X_{i}$ (and so $\delta_i$ is open whenever $\delta_i$ is surjective).

\begin{definition}
\label{D_ProjectiveSequenceMappings}
Let $\left(  X_{i},\delta_{i}^{j} \right)  _{j\geq i}$ and $\left(  Y_{i},\gamma_{i}^{j} \right)  _{j\geq i}$ be two projective sequences whose respective projective limits are $X$ and $Y$.

A sequence $\left(  f_{i}\right)  _{i\in\mathbb{N}}$ of continuous mappings $f_{i}:X_{i}\to Y_{i}$, satisfying, for all $(i,j) \in \mathbb{N}^2,$ $j \geq i,$ the coherence condition\index{coherence condition}
\[
\gamma_{i}^{j}\circ f_{j}=f_{i}\circ\delta_{i}^{j}
\]
is called a projective sequence of mappings\index{projective sequence!of mappings}.
\end{definition}

The projective limit of this sequence is the mapping
\[
\begin{array}
[c]{cccc}
f: & X & \to & Y\\
& \left(  x_{i}\right)  _{i\in\mathbb{N}} & \mapsto & \left(  f_{i}\left(
x_{i}\right)  \right)  _{i\in\mathbb{N}}
\end{array}
\]

The mapping $f$ is continuous if all the $f_{i}$ are continuous (cf. \cite{AbbMa}).

\subsection{Projective limits of Banach spaces}
\label{__ProjectiveLimitsOfBanachSpaces}
Consider a projective sequence $\left(  \mathbb{E}_{i},\delta_{i}^{j} \right)  _{j\geq i}$ of Banach spaces.
\begin{remark}
\label{R_ProjectiveSequenceOfBondingsMapsBetweenBanachSpacesDeterminedByConsecutiveRanks}
Since we have a countable sequence of Banach spaces, according to the properties of bonding maps, the sequence  $\left( \delta_i^j\right)_{(i,j)\in \mathbb{N}^2, \;j\geq i}$ is well defined by the sequence of bonding maps $\left( \delta_i^{i+1}\right) _{i\in \mathbb{N}}$.
\end{remark}

\subsection{Projective limits of differential maps}
\label{__ProjectiveLimitsOfDifferentialMapsBetweenFrechetSpaces}
The following proposition (cf. \cite{Gal1}, Lemma 1.2 and \cite{CaPe}, Chapter 4) is essential 
\begin{proposition} 
\label{P_ProjectiveLimitsOfDifferentialMaps}
Let $\left( \mathbb{E}_i,\delta_i^j \right) _{j\geq i}$ be a projective sequence of Banach spaces whose projective limit is the Fr\'echet space $\mathbb{F}=\underleftarrow{lim} \mathbb{E}_i$ and $ \left( f_i : \mathbb{E}_i \to \mathbb{E}_i  \right) _{i \in \mathbb{N}} $ a projective sequence of differential maps whose projective limit is $f=\underleftarrow{\lim} f_i$.
Then the following conditions hold:
\begin{enumerate}
\item
$f$ is smooth in the convenient sense (cf. \cite{KrMi})
\item
For all $x = \left( x_i \right) _{i \in \mathbb{N}}$, $df_x = \underleftarrow{\lim} { \left( df_i \right) }_{x_i} $.
\item
$df = \underleftarrow{\lim}df_i$.
\end{enumerate}
\end{proposition}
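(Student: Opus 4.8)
The plan is to carry everything out inside the convenient calculus of \cite{KrMi}, in which smoothness of a map between locally convex spaces is tested on smooth curves. The one structural fact I would isolate first is that $\mathbb{F}=\underleftarrow{\lim}\mathbb{E}_i$ is the projective limit of the $\mathbb{E}_i$ also in the category of convenient vector spaces: it is a closed linear subspace of the Fr\'echet product $\prod_i\mathbb{E}_i$, and on a Fr\'echet space the $c^\infty$-topology coincides with the locally convex topology, so $\mathbb{F}$ is $c^\infty$-closed in $\prod_i\mathbb{E}_i$. From this one extracts the criterion used throughout: a curve $\gamma:\mathbb{R}\to\mathbb{F}$ is smooth if and only if each component $\delta_i\circ\gamma:\mathbb{R}\to\mathbb{E}_i$ is smooth (smoothness into the product is componentwise, and smoothness into a $c^\infty$-closed subspace is inherited from the ambient space; alternatively one checks directly that the difference quotients, hence all derivatives, of $\gamma$ remain in $\mathbb{F}$).

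For (1): let $c:\mathbb{R}\to\mathbb{F}$ be smooth. Then $\delta_i\circ c:\mathbb{R}\to\mathbb{E}_i$ is smooth, and since $f_i:\mathbb{E}_i\to\mathbb{E}_i$ is a smooth map between Banach spaces, the composition $f_i\circ(\delta_i\circ c)$ is again smooth. The coherence relation $\delta_i^j\circ f_j=f_i\circ\delta_i^j$ passes to the limit as $\delta_i\circ f=f_i\circ\delta_i$, so $\delta_i\circ(f\circ c)=f_i\circ(\delta_i\circ c)$ is smooth for every $i$; by the criterion above, $f\circ c$ is a smooth curve in $\mathbb{F}$. Hence $f$ maps smooth curves to smooth curves and is smooth in the convenient sense.

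For (2): I would first note that the family $\big((df_i)_{x_i}\big)_i$ is coherent. Differentiating $\delta_i^j\circ f_j=f_i\circ\delta_i^j$ at $x_j$ and using that $\delta_i^j$ is linear gives $\delta_i^j\circ(df_j)_{x_j}=(df_i)_{x_i}\circ\delta_i^j$, which is exactly the coherence condition for the linear maps $v_i\mapsto(df_i)_{x_i}(v_i)$; thus $\underleftarrow{\lim}(df_i)_{x_i}:\mathbb{F}\to\mathbb{F}$ is a well-defined continuous linear map. Now fix $v=(v_i)\in\mathbb{F}$. In the convenient calculus $df_x(v)=\partial_t|_{t=0}f(x+tv)$, the derivative at $0$ of the smooth curve $t\mapsto f(x+tv)$; since $\delta_i$ is continuous and linear it commutes with this derivative, whence
\[
\delta_i\big(df_x(v)\big)=\partial_t|_{t=0}\,\delta_i\big(f(x+tv)\big)=\partial_t|_{t=0}\,f_i(x_i+tv_i)=(df_i)_{x_i}(v_i).
\]
As this holds for all $i$, $df_x(v)$ is the thread with components $(df_i)_{x_i}(v_i)$, i.e. $df_x(v)=\big(\underleftarrow{\lim}(df_i)_{x_i}\big)(v)$, which is (2).

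For (3): the maps $df_i:\mathbb{E}_i\times\mathbb{E}_i\to\mathbb{E}_i$, $(x_i,v_i)\mapsto(df_i)_{x_i}(v_i)$, form a projective sequence over the bonding maps $\delta_i^j\times\delta_i^j$ (again by the differentiated coherence relation), whose source projective limit is $\mathbb{F}\times\mathbb{F}$ and whose limit map $\underleftarrow{\lim}df_i:\mathbb{F}\times\mathbb{F}\to\mathbb{F}$ therefore exists; by (2) it coincides with $df$ pointwise, so $df=\underleftarrow{\lim}df_i$. The only genuinely delicate point in the whole argument is the componentwise smoothness criterion for curves into $\mathbb{F}$; everything else is bookkeeping with coherence conditions and the elementary fact that continuous linear maps commute with derivatives of smooth curves. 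I would therefore state that criterion as a preliminary lemma, or simply invoke \cite{KrMi} together with the projective-limit constructions recalled in \cite{CaPe}, Chapter 4.
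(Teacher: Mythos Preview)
Your argument is correct. The paper does not supply its own proof of this proposition; it simply records it with a reference to \cite{Gal1}, Lemma~1.2 and \cite{CaPe}, Chapter~4. Your approach---testing smoothness on curves via the componentwise criterion for a Fr\'echet projective limit, then reading off $df_x$ as the thread of directional derivatives $(df_i)_{x_i}$ via the differentiated coherence relation---is the standard argument behind those references, so there is nothing to contrast.
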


\subsection{Projective limits of Banach manifolds and Banach Lie groups}
\label{__ProjectiveLimitsOfBanachManifolds}

\begin{definition}\cite{Gal1}
\label{D_ProjectiveSequenceofBanachManifolds}
The projective sequence $\left( M_{i},\delta_{i}^{j} \right) _{j\geq i}$ is called \textit{projective sequence of Banach manifolds}\index{projective sequence!of Banach manifolds} if
\begin{description}
\item[\textbf{(PSBM 1)}]
$M_{i}$ is a manifold modelled on the Banach space $\mathbb{M}_{i}$;

\item[\textbf{(PSBM 2)}]
$\left(  \mathbb{M}_{i},\overline{\delta_{i}^{j}}\right) _{j\geq i}$ is a projective sequence of Banach spaces;

\item[\textbf{(PSBM 3)}]
For all $x=\left(  x_{i}\right)  \in M=\underleftarrow{\lim}M_{i}$, there exists a projective sequence of local
charts $\left(  U_{i},\xi_{i}\right)  _{i\in\mathbb{N}}$ such that
$x_{i}\in U_{i}$ where one has the relation
\[
\xi_{i}\circ\delta_{i}^{j}=\overline{\delta_{i}^{j}}\circ\varphi_{j};
\]

\item[\textbf{(PSBM 4)}]
 $U=\underleftarrow{\lim}U_{i}$ is a non empty open set in $M$.
\end{description}
\end{definition}

Under the assumptions \textbf{(PSBM 1)} and  \textbf{(PSBM 2)} in Definition \ref{D_ProjectiveSequenceofBanachManifolds}, the assumptions \textbf{(PSBM 3)}] and \textbf{(PSBM 4)}  around $x\in M$ is called \emph{the projective limit chart property} around $x\in M$ and  $(U=\underleftarrow{\lim}U_{i}, \phi=\underleftarrow{\lim}\phi_{i})$ is called a \emph{projective limit chart}.

The projective limit $M=\underleftarrow{\lim}M_{i}$ has a structure of Fr\'{e}chet manifold modelled on the Fr\'{e}chet space $\mathbb{M}
=\underleftarrow{\lim}\mathbb{M}_{i}$ and is called a \emph{$\mathsf{PLB}$-manifold}\index{$\mathsf{PLB}$-manifold}. The differentiable structure is defined \textit{via} the charts $\left(  U,\varphi\right)  $ where $\varphi
=\underleftarrow{\lim}\xi_{i}:U\to\left(  \xi_{i}\left(U_{i}\right)  \right) _{i \in \mathbb{N}}.$\\
$\varphi$ is a homeomorphism (projective limit of homeomorphisms) and the charts changings $\left(  \psi\circ
\varphi^{-1}\right)  _{|\varphi\left(  U\right)  }=\underleftarrow{\lim
}\left(  \left(  \psi_{i}\circ\left(  \xi_{i}\right)  ^{-1}\right)
_{|\xi_{i}\left(  U_{i}\right)  }\right)  $ between open sets of
Fr\'{e}chet spaces are smooth in the sense of convenient spaces.\\

\begin{definition}\cite{Gal1}
\label{D_ProjectiveLimitOfBanachLieGroups}
 $\left(  G_{i},\delta_{i}^{j} \right) _{j \geq i}  $
is a projective sequence of Banach-Lie groups
where $G_{i}$ is modelled on $\mathbb{G}_{i}$ if , for all $i\in\mathbb{N}$, there exists a chart $\left(  U_{i},\varphi_{i}\right)  $ centered at the unity $e_{i}\in G_{i}$ such that:
\begin{description}
\item[\textbf{(PLBLG 1)}]
$\forall(i,j) \in \mathbb{N}^2: j \geq i, \; \delta_{i}^{j}(U_{j})\subset U_{i}$;

\item[\textbf{(PLBLG 2)}]
$\forall(i,j) \in \mathbb{N}^2: j \geq i, \; \overline{\delta_{i}^{j}}\circ\varphi_{j}=\varphi_{j}\circ\delta_{i}^{j}$;

\item[\textbf{(PLBLG 3)}]
$\underleftarrow{\lim}\varphi_{i}(U_{i})$ is a non empty  open set of $\mathbb{G}$ and $\underleftarrow{\lim}U_{i}$ is open in $G$ according to the projective limit topology.
\end{description}
\end{definition}

 A projective sequence of Banach-Lie groups $\left(  G_{i},\delta_{i}^{j} \right) _{j \geq i}  $ is \emph{submersive} if each $ \delta_i^j$ is a surjective submersion.

\begin{theorem}\cite{Gal1}
\label{T_PLGLieGroup}
Let ${G}=\underleftarrow{\lim}{G}_{i}$ be a the projective limit of a    projective sequence of Banach-Lie groups $\left(  G_{i},\delta_{i}^{j} \right) _{j \geq i}  $. Then we have the following properties:
\begin{enumerate}
\item
$G$ is a Fr\'echet-Lie group.
\item
If $\mathbf{L}(G_i)$ is the Lie algebra of $G_i$ then $\mathbf{L}(G)=\underleftarrow{\lim}{\mathbf{L}G}_{i}$.
\item
If $\exp_{G_i}$ is the exponential map for $G_i$, then $\exp_G=\underleftarrow{\lim}\exp_{G_i}$ is the exponential map of the Fr\'echet-Lie group $G$.
\end{enumerate}
\end{theorem}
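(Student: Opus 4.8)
The plan is to transport the Banach-Lie group structures through the projective limit: construct the group operations, the Lie algebra, and the exponential map factor by factor on the $G_i$ and then take projective limits, exploiting that on each Banach factor the operations are smooth and the integral curves of left-invariant vector fields exist and are unique. \emph{For (1):} by Definition \ref{D_ProjectiveSequenceofBanachManifolds} together with the chart conditions \textbf{(PLBLG 1)}--\textbf{(PLBLG 3)}, $G=\underleftarrow{\lim}G_i$ is a $\mathsf{PLB}$-manifold, hence a Fr\'echet manifold modelled on $\mathbb{G}=\underleftarrow{\lim}\mathbb{G}_i$, and likewise $G\times G=\underleftarrow{\lim}(G_i\times G_i)$. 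Each multiplication $m_i:G_i\times G_i\to G_i$ and inversion $\iota_i:G_i\to G_i$ is a smooth Banach map, and since every bonding map $\delta_i^j$ is a Lie group morphism the sequences $(m_i)$ and $(\iota_i)$ are projective sequences of maps; thus $m=\underleftarrow{\lim}m_i$ and $\iota=\underleftarrow{\lim}\iota_i$ are well defined and, read in projective-limit charts, smooth in the convenient sense by Proposition \ref{P_ProjectiveLimitsOfDifferentialMaps}. The group axioms pass trivially to the limit, so $G$ is a Fr\'echet-Lie group.

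\emph{For (2):} in a projective-limit chart around $e=(e_i)$ one gets $T_eG\cong\underleftarrow{\lim}T_{e_i}G_i=\underleftarrow{\lim}\mathbf{L}(G_i)$ as Fr\'echet spaces, and more generally $TG=\underleftarrow{\lim}TG_i$. To identify the bracket, I would associate to $\xi=(\xi_i)\in\underleftarrow{\lim}\mathbf{L}(G_i)$ its left-invariant vector field $X_\xi$ on $G$; since left translations on $G$ are projective limits of left translations on the $G_i$, one has $X_\xi=\underleftarrow{\lim}X_{\xi_i}$, and by compatibility of the Lie bracket of vector fields with projective limits (again Proposition \ref{P_ProjectiveLimitsOfDifferentialMaps}) $[X_\xi,X_\eta]=\underleftarrow{\lim}[X_{\xi_i},X_{\eta_i}]$. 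Evaluating at $e$ gives $[\xi,\eta]_{\mathbf{L}(G)}=\underleftarrow{\lim}[\xi_i,\eta_i]_{\mathbf{L}(G_i)}$, so $\mathbf{L}(G)=\underleftarrow{\lim}\mathbf{L}(G_i)$ as a Fr\'echet-Lie algebra.

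\emph{For (3):} for $\xi=(\xi_i)\in\mathbf{L}(G)$, each $X_{\xi_i}$ is a complete left-invariant vector field on the Banach-Lie group $G_i$, with integral curve $\gamma_{\xi_i}:\mathbb{R}\to G_i$ through $e_i$. Because $\delta_i^j$ is a group morphism, $T\delta_i^j\circ X_{\xi_j}=X_{\xi_i}\circ\delta_i^j$, so $\delta_i^j\circ\gamma_{\xi_j}$ is an integral curve of $X_{\xi_i}$ through $e_i$, and uniqueness of integral curves in the Banach manifold $G_i$ forces $\delta_i^j\circ\gamma_{\xi_j}=\gamma_{\xi_i}$. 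Hence $\gamma_\xi:=\underleftarrow{\lim}\gamma_{\xi_i}$ is a well-defined smooth curve in $G$, it is the integral curve of $X_\xi$ through $e$, and putting $\exp_G(\xi):=\gamma_\xi(1)=\underleftarrow{\lim}\gamma_{\xi_i}(1)=\underleftarrow{\lim}\exp_{G_i}(\xi_i)$ yields the stated formula; the one-parameter-subgroup identities and the characterization of $\exp_G$ as the Lie-group exponential follow by passing to the limit in the corresponding identities on each $G_i$.

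The main obstacle --- and the reason the statement is not a formal consequence of general Fr\'echet-Lie theory --- is that a Fr\'echet-Lie group need not admit an exponential map and vector fields on Fr\'echet manifolds need not have flows; what rescues us is precisely the projective-limit structure, since the flows are built on the Banach factors $G_i$, where uniqueness of solutions of the Banach ODEs makes them coherent under the bonding maps, so that the projective limit exists and automatically satisfies all algebraic identities. A minor technical point pervading the argument is to keep the notion of smoothness consistent (convenient-calculus smoothness for the Fr\'echet objects), which is exactly what Proposition \ref{P_ProjectiveLimitsOfDifferentialMaps} provides.
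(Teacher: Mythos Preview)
The paper does not prove this theorem; it is quoted in the appendix from \cite{Gal1} without argument. Your sketch is correct and follows the standard route of Galanis's original paper: obtain the Fr\'echet manifold structure from the $\mathsf{PLB}$-chart at $e$ (propagated to every point by left translations, which you leave implicit but is needed to verify \textbf{(PSBM 3)}--\textbf{(PSBM 4)} everywhere), pass the group operations and the bracket to the limit via Proposition~\ref{P_ProjectiveLimitsOfDifferentialMaps}, and build $\exp_G$ from the coherent one-parameter subgroups on the Banach factors using uniqueness of Banach ODE solutions.
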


\subsection{Projective limits of Banach vector bundles }
\label{__ProjectiveLimitsOfBanachVectorBundles}

Let $\left(  M_{i},\delta_{i}^{j}\right)  _{j\geq i}$ be a projective sequence of Banach manifolds where each manifold $M_{i}$ is modeled on the Banach space $\mathbb{M}_{i}$.\\
For any integer $i$, let $ \left( E_{i},\pi_{i},M_{i} \right) $ be the Banach
vector bundle whose type fibre is the Banach vector space $\mathbb{E}_{i}$
where $\left(  \mathbb{E}_{i},\lambda_{i}^{j}\right)  _{j\geq i}$ is a projective sequence of Banach spaces.

\begin{definition}
\label{D_ProjectiveSequenceBanachVectorBundles}
$\left( (E_i,\pi_i,M_i),\left(\xi_i^j,\delta_i^j \right) \right) _{j \geq i}$, where $\xi_i^j:E_j \to E_i$ is a morphism of vector bundles, is called a projective sequence of Banach vector bundles\index{projective sequence!of Banach vector bundles} on the projective sequence of manifolds $\left(  M_{i},\delta_{i}^{j}\right)  _{j\geq i}$ if, for all $ \left( x_{i} \right) $, there exists a projective sequence of
trivializations $\left(  U_{i},\tau_{i}\right)  $ of $\left(  E_{i},\pi
_{i},M_{i}\right) $, where $\tau_{i}:\left(  \pi_{i}\right)  ^{-1}\left(
U_{i}\right)  \to U_{i}\times\mathbb{E}_{i}$ are local
diffeomorphisms, such that $x_{i}\in U_{i}$ (open in $M_{i}$) and where
$U=\underleftarrow{\lim}U_{i}$ is a non empty open set in $M$
 where, for all $(i,j) \in \mathbb{N}^2$ such that $j\geq i,$ we have the compatibility condition
\begin{description}
\item[(\textbf{PLBVB})]
$\left(  \delta_{i}^{j}\times\lambda_{i}^{j}\right)  \circ\tau_{j}=\tau_{i}\circ \xi_i^j$.
\end{description}
\end{definition}

With the previous notations,  $(U=\underleftarrow{\lim}U_{i}, \tau=\underleftarrow{\lim}\tau_i)$   is called a \emph{ projective bundle chart limit}\index{projective bundle chart limit}. The triple of  projective limit
 $(E=\underleftarrow{\lim}E_{i}, \pi=\underleftarrow{\lim}\pi_{i}, M=\underleftarrow{\lim}M_{i}))$ is called a \emph{projective limit of Banach bundles} or $\mathsf{PLB}$-bundle for short. \\

The following proposition generalizes the result of \cite{Gal2} about the projective limit of tangent bundles to Banach manifolds (cf. \cite{DGV} and \cite{CaPe}). 

\begin{proposition}
\label{P_ProjectiveLimitOfBanachVectorBundles}
Let $\left( (E_i,\pi_i,M_i),\left(\xi_i^j,\delta_i^j \right) \right)_{j \geq i}$ be a projective sequence of Banach vector bundles. \\
Then $\left(  \underleftarrow{\lim}E_i,\underleftarrow{\lim}\pi_i,\underleftarrow{\lim}M_i \right)  $ is a Fr\'{e}chet vector bundle.
\end{proposition}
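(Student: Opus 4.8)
The plan is to verify directly that $E = \underleftarrow{\lim} E_i$, equipped with the projection $\pi = \underleftarrow{\lim}\pi_i$ and the Fr\'echet space $\mathbb{E} = \underleftarrow{\lim}\mathbb{E}_i$ as typical fibre, satisfies the axioms of a Fr\'echet vector bundle over the $\mathsf{PLB}$-manifold $M = \underleftarrow{\lim}M_i$. First I would recall that $M$ is already a Fr\'echet manifold by the remarks following Definition \ref{D_ProjectiveSequenceofBanachManifolds}, and that $\mathbb{E}$ is a Fr\'echet space by the discussion of projective limits of Banach spaces. The map $\pi$ is continuous as a projective limit of continuous maps (see the paragraph after Definition \ref{D_ProjectiveSequenceMappings}), and it is surjective since each $\pi_i$ is and the threads match up. The fibre $\pi^{-1}(x)$ over $x = (x_i)$ is exactly $\underleftarrow{\lim}(\pi_i)^{-1}(x_i) = \underleftarrow{\lim}(E_i)_{x_i}$, a closed vector subspace of $\prod_i (E_i)_{x_i}$, hence a Fr\'echet space, and the linear structure on it is the one inherited coordinatewise; this gives the vector space structure on each fibre.

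Next I would construct the local trivializations. Fix $x = (x_i) \in M$; by Definition \ref{D_ProjectiveSequenceBanachVectorBundles} there is a projective sequence of trivializations $(U_i, \tau_i)$ with $\tau_i : \pi_i^{-1}(U_i) \to U_i \times \mathbb{E}_i$ and $U = \underleftarrow{\lim}U_i$ a nonempty open set in $M$; moreover the compatibility condition \textbf{(PLBVB)}, namely $(\delta_i^j \times \lambda_i^j)\circ\tau_j = \tau_i \circ \xi_i^j$, says precisely that the $\tau_i$ form a projective sequence of maps from $(\pi_i^{-1}(U_i), \xi_i^j)$ to $(U_i \times \mathbb{E}_i, \delta_i^j \times \lambda_i^j)$. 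Hence $\tau := \underleftarrow{\lim}\tau_i$ is a well-defined bijection from $\pi^{-1}(U) = \underleftarrow{\lim}\pi_i^{-1}(U_i)$ onto $U \times \mathbb{E} = (\underleftarrow{\lim}U_i) \times (\underleftarrow{\lim}\mathbb{E}_i)$. One checks $\tau$ and $\tau^{-1}$ are smooth in the convenient sense by Proposition \ref{P_ProjectiveLimitsOfDifferentialMaps} applied to the sequences $(\tau_i)$ and $(\tau_i^{-1})$, so $\tau$ is a diffeomorphism; compatibility with $\pi$ is immediate since $\operatorname{pr}_1 \circ \tau = \pi$ fibrewise. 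Restricted to a fibre, $\tau$ is $\underleftarrow{\lim}$ of the Banach-space isomorphisms $(\tau_i)_{x_i} : (E_i)_{x_i} \to \mathbb{E}_i$, hence a topological linear isomorphism $(E_i)_{x_i} \to \mathbb{E}$; this is the local triviality of the fibrewise-linear structure.

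Finally I would verify the cocycle/transition condition. Given two such projective trivialization systems $(U_i, \tau_i)$ and $(U_i', \tau_i')$ with overlapping limit domains, each transition function $\tau_i' \circ \tau_i^{-1}$ has the form $(y_i, v_i) \mapsto (y_i, g_i(y_i) v_i)$ for a smooth map $g_i : U_i \cap U_i' \to GL(\mathbb{E}_i)$, and the compatibility \textbf{(PLBVB)} forces these $g_i$ to be coherent with the bonding maps, so that $g := \underleftarrow{\lim}g_i$ takes values in the automorphisms of $\mathbb{E}$ and the transition $\tau' \circ \tau^{-1}$ is $(y,v) \mapsto (y, g(y)v)$, which is smooth by Proposition \ref{P_ProjectiveLimitsOfDifferentialMaps}. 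This exhibits $E$ with the charts $(\pi^{-1}(U), \tau)$ as a Fr\'echet vector bundle over $M$ with typical fibre $\mathbb{E}$, completing the proof. The main technical obstacle I anticipate is purely bookkeeping: making sure that each ingredient (fibres, trivializations, transition maps, and their smoothness) is genuinely the projective limit of its Banach counterparts and that the open sets $U = \underleftarrow{\lim}U_i$ are nonempty and provide an atlas of $M$ — this is where condition \textbf{(PLBVB)} and the $\mathsf{PLB}$-manifold axioms do all the work, and the only real "theorem" being invoked is Proposition \ref{P_ProjectiveLimitsOfDifferentialMaps} for the convenient-smoothness of projective limits of smooth maps.
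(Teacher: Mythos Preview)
The paper does not actually prove this proposition in the text: it is stated as a known result, with the proof deferred to \cite{Gal2}, \cite{DGV}, and \cite{CaPe}. Your direct verification---building the projective-limit trivializations from the compatibility condition \textbf{(PLBVB)} and invoking Proposition \ref{P_ProjectiveLimitsOfDifferentialMaps} for smoothness---is correct and is precisely the standard argument one finds in those references, so there is nothing to compare against here.

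One small point worth tightening: when you say the transition map $g = \underleftarrow{\lim} g_i$ ``takes values in the automorphisms of $\mathbb{E}$'' and is smooth, note that in the Fr\'echet setting $\mathrm{GL}(\mathbb{E})$ is not in general a Lie group, so one must be careful about what ``smooth'' means for $g$. The references handle this by working with the Fr\'echet space $\mathcal{H}(\mathbb{E},\mathbb{E})$ of Appendix \ref{__TheFrechetSpaceHF1F2-TheBanachSpaceHbF1F2} (or equivalently by checking smoothness of the full map $(y,v)\mapsto g(y)v$ directly as a projective limit), rather than treating $g$ as a map into an abstract automorphism group. Your final sentence already gestures at this, but it is the one place where a referee might ask you to be more explicit.
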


\begin{notation}
\label{N_ProjectiveSequenceBundles}
For the sake of simplicity, the projective sequence $\left( (E_i,\pi_i,M_i),\left(\xi_i^j,\delta_i^j \right) \right)_{j \geq i}$  will be denoted $ (E_i,\pi_i,M_i) $.
\end{notation}

\begin{definition}
\label{D_StrongProjectiveLimitOfBanachBundle}  A sequence $\left(  E_{i},\pi_{i},M_{i}\right)
$ is called a submersive  projective sequence of Banach vector bundles if $\left( M_i,\delta_i^j)_{j\geq i}\right)$ is a submersive projective sequence of Banach manifolds and if around each $x\in M=\underleftarrow{\lim}M_i$, there exists a  projective limit chart bundle $(U=\underleftarrow{\lim}U_{i}, \tau=\underleftarrow{\lim}\tau_i)$ such that for all $i\in \mathbb{N}$, we have a decomposition $\mathbb{E}_{i+1}=\ker\bar{\lambda}_i^{i+1}\oplus \mathbb{E}'_i$ such that the condition \emph{(\textbf{PLBVB})} is true.
\end{definition}

The projective limit  $(E,\pi, M)$ of a   projective sequence of Banach vector bundles $\left(E_i,\pi_i, M_i\right)
$ is called a \emph{submersive projective  limit of Banach bundles} or \emph{submersive $\mathsf{PLB}$-bundle}\index{submersive $\mathsf{PLB}$-bundle} for short.\\

Now, we have the following result: 

\begin{proposition}
\label{P_StrongProjectiveLimitOfBanachBundle}
Let $\left(  E_{i},\pi_{i},M_{i}\right)$ be a submersive projective sequence of Banach bundles. Then, for each $i\in \mathbb{N}$, the maps $\delta_i:M\to M_i$ and  $\lambda_i: E\to E_i$ are  submersions.
\end{proposition}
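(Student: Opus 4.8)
The plan is to reduce the statement about submersions to the local structure supplied by Definition \ref{D_StrochProjectiveLimitOfBanachBundle}, namely the existence around each point of a projective limit chart bundle for which the bonding maps split. First I would recall that $\delta_i = \underleftarrow{\lim}_{j\geq i}\delta_i^j$ and $\lambda_i = \underleftarrow{\lim}_{j\geq i}\lambda_i^j$, and that, since each $\delta_i^j$ is a surjective submersion of Banach manifolds, the bonding maps $\bar\delta_i^j\colon\mathbb M_j\to\mathbb M_i$ between the model spaces are split surjections, and likewise the bundle bonding maps $\bar\lambda_i^j\colon\mathbb E_j\to\mathbb E_i$ are split surjections by the hypothesis $\mathbb E_{i+1}=\ker\bar\lambda_i^{i+1}\oplus\mathbb E'_i$ together with the cocycle relation \textbf{(PSTS 4)}. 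In particular the projective systems $(\mathbb M_j)_{j\geq i}$ and $(\mathbb E_j)_{j\geq i}$ are \emph{surjective} (reduced) systems.

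Next I would work in a single projective limit chart bundle $(U=\underleftarrow{\lim}U_i,\tau=\underleftarrow{\lim}\tau_i)$ around a fixed $x=\underleftarrow{\lim}x_i$, in which $E$ becomes $U\times\mathbb E$ with $\mathbb E=\underleftarrow{\lim}\mathbb E_i$, and $E_i$ becomes $U_i\times\mathbb E_i$, so that $\lambda_i$ is identified with $\delta_i\times(\underleftarrow{\lim}_{j\geq i}\bar\lambda_i^j)$. Thus it suffices to prove the statement for $\delta_i\colon M\to M_i$, i.e. locally for the linear projection $\underleftarrow{\lim}_{j\geq i}\bar\delta_i^j\colon\mathbb M\to\mathbb M_i$, and then transport it through the charts; the $E$-case is then immediate because a product of two submersions is a submersion and the identification above is a diffeomorphism of $\mathsf{PLB}$-bundles. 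To show $\delta_i\colon M\to M_i$ is a submersion I would: (i) show it is surjective, which follows from surjectivity of the system — given $y_i\in M_i$, using that each $\delta_i^{i+1}$ is a \emph{surjective} submersion one builds a thread $(y_j)_{j\geq i}$ lifting $y_i$ step by step, and extends backwards by $y_k=\delta_k^i(y_i)$ for $k<i$; (ii) exhibit, at each point, a splitting of the differential. For (ii) the key point is that the model map $p:=\underleftarrow{\lim}_{j\geq i}\bar\delta_i^j\colon\mathbb M\to\mathbb M_i$ admits a continuous linear right inverse: since $\bar\delta_i^{i+1}\colon\mathbb M_{i+1}\to\mathbb M_i$ splits as $\mathbb M_{i+1}=\ker\bar\delta_i^{i+1}\oplus\mathbb M'_i$ (by Proposition-type remarks on submersions, or by the analogous hypothesis one gets for the tangent bundle), one has compatible continuous sections $s_i^{i+1}\colon\mathbb M_i\to\mathbb M_{i+1}$; composing $s_i^{i+1}, s_{i+1}^{i+2},\dots$ does \emph{not} directly give a thread, so instead I would observe that $p$ is a split surjection of Fréchet spaces because its kernel $\underleftarrow{\lim}\ker\bar\delta_i^j$ is a complemented Fréchet subspace of $\mathbb M$ — indeed the partial splittings assemble a closed complement $\mathbb M'=\{(u_j):\ u_j\in\mathbb M'_{i,j}\}$ exactly as in the proof of Lemma \ref{L_kerrhox}, where the same kind of argument produces $\mathbb E=\ker r\oplus\mathbb F'$ as a projective limit of splittings. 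Hence $\mathbb M\cong\mathbb M_i\times\mathbb M'$ as Fréchet spaces and $p$ is, up to this isomorphism, the first projection, which is patently a submersion; pulling this back through the charts, and using Proposition \ref{P_ProjectiveLimitsOfDifferentialMaps} to identify $d(\delta_i)_x$ with $p$ at every point, gives that $\delta_i$ is a submersion, and then so is $\lambda_i$.

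The main obstacle I expect is the assembly of the splittings into a \emph{coherent} global complement: the naive composites of the partial sections $s_i^{i+1}$ need not respect the bonding maps, so one cannot just "exponentiate" a single splitting. The fix, as indicated, is to mimic the bookkeeping in Lemma \ref{L_kerrhox} — define $\mathbb M'_{i,j}$ inside $\mathbb M_j$ so that $\bar\delta_k^j(\mathbb M'_{i,j})=\mathbb M'_{i,k}$ for $i\leq k\leq j$, check these restricted bonding maps form a surjective projective subsystem, set $\mathbb M'=\underleftarrow{\lim}_j\mathbb M'_{i,j}$, and verify $\mathbb M=\ker\delta_i\oplus\mathbb M'$ with $\delta_i|_{\mathbb M'}$ a topological isomorphism onto $\mathbb M_i$. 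Once this linear-algebra/projective-limit step is in place, everything else — surjectivity of $\delta_i$, the local normal form, transport through charts, and the passage from $\delta_i$ to $\lambda_i$ — is routine, and the proof is complete.
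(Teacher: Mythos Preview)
The paper states Proposition~\ref{P_StrongProjectiveLimitOfBanachBundle} without proof, so there is no argument in the text to compare against; your proposal therefore has to be judged on its own merits.

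Your overall strategy is sound and would succeed: reduce to a projective limit chart, identify $\delta_i$ with the linear projection $\bar\delta_i=\underleftarrow{\lim}_{j\geq i}\bar\delta_i^j\colon\mathbb{M}\to\mathbb{M}_i$, show this is a split surjection of Fr\'echet spaces, and then pass to $\lambda_i$ via the product identification $\lambda_i\simeq\delta_i\times\bar\lambda_i$ in the trivialization. However, the ``main obstacle'' you flag is not an obstacle at all, and your detour through a projective limit of complements (\`a la Lemma~\ref{L_kerrhox}) is unnecessary. If $s_j^{j+1}\colon\mathbb{M}_j\to\mathbb{M}_{j+1}$ is a continuous linear section of $\bar\delta_j^{j+1}$, then setting $x_{j+1}=s_j^{j+1}(x_j)$ inductively from a given $x_i$ \emph{does} produce a thread: by definition $\bar\delta_j^{j+1}(x_{j+1})=x_j$, and the cocycle relation $\bar\delta_k^j=\bar\delta_k^{k+1}\circ\cdots\circ\bar\delta_{j-1}^j$ then forces $\bar\delta_k^j(x_j)=x_k$ for all $i\leq k\leq j$. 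Completing by $x_k=\bar\delta_k^i(x_i)$ for $k<i$ gives a full thread, and the resulting map $s\colon\mathbb{M}_i\to\mathbb{M}$ is linear and continuous (its composition with each $\bar\delta_j$ is a finite composite of continuous linear maps). Thus $\bar\delta_i$ is a split surjection directly, with $\mathbb{M}=\ker\bar\delta_i\oplus s(\mathbb{M}_i)$. Your more elaborate construction via coherent complements $\mathbb{M}'_{i,j}$ also works, but it is solving a problem that is not there.
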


\subsection{ Projective limit of Banach Lie algebroids}\label{_ProjectiveLimitBanachLieAlgebroidsFinslerMetric}
\begin{definition}\label{_BanachLieAlgebroid} Let $\pi:E\to M$ be a Banach bundle.
\begin{enumerate}
\item[(1)] an anchor is a vector  bundle morphism $\rho:E\to TM$ and  $(E,\rho)$ is called an anchored bundle
\item[(2)] An almost Lie bracket $[.,.]_E$ on an anchored bundle $E$ is a sheaf of antisymmetric bilinear maps
\[
\lbrack.,.]_{E_U}:\Gamma\left( E_U\right)  \times\Gamma\left( E_U\right)
\to\Gamma\left( E_U\right)
\]
 for any open set $U\subseteq M$ and which satisfies the following properties
\begin{enumerate}
\item [\textbf{(AL 1)}]
the Leibniz identity:\index{Leibniz identity}
\[
\forall\left(   \mathfrak{a}_{1}, \mathfrak{a}_{2}\right)  \in \Gamma \left( E_U \right)  ^{2}, \forall f \in C^{\infty}(M)
,\ [ \mathfrak{a}_{1},f \mathfrak{a}_{2}]_{E_U}=f.[ \mathfrak{a}_{1}, \mathfrak{a}_{2}]_{{E_U}}+df(\rho( \mathfrak{a}_{1})). \mathfrak{a}_{2}.
\]
\item[\textbf{(AL 2)}]
For any  open set $U\subseteq M$ and any $(\mathfrak{a}_1,\mathfrak{a}_2) \in \Gamma(E_U)^2$, the map
\[
(\mathfrak{a}_1,\mathfrak{a}_2)\mapsto [\mathfrak{a}_1,\mathfrak{a}_2]_{E_U}
\]
only depends on the $1$-jets of $\mathfrak{a}_1$ and $\mathfrak{a}_2$.
\end{enumerate}
\item[(3)] An anchored bundle $(E,\rho)$ provided with  an almost Lie bracket $[.,.]_E$ which satisfies the Jacobi identity
\[
[[ \mathfrak{a}_{1}, \mathfrak{a}_{2}]_{E},\mathfrak{a}_3]_E+[[ \mathfrak{a}_{2}, \mathfrak{a}_{3}]_{E},\mathfrak{a}_1]_E+[[ \mathfrak{a}_{3}, \mathfrak{a}_{1}]_{E},\mathfrak{a}_2]_E=0
\]
\[\forall\left(   \mathfrak{a}_{1}, \mathfrak{a}_{2}\mathfrak{a}_{3}\right)  \in \Gamma \left( E_U \right)  ^{3}\]
is called a Lie algebroid.\\
\end{enumerate}
\end{definition}

\begin{definition}
\label{D_ProjectiveSequenceofBanachLieAlgebroids}
$\left(  E_{i},\pi_{i},M_{i},\rho_{i}, [.,.]_{i} \right)$ is called a  submersive projective sequence of split  Lie algebroids  if
\begin{description}
\item[\textbf{(PSBLA 1)}] $\left(  E_{i},\xi_i^j\right)  _{j\geq i} $ is a submersive
projective sequence of Banach vector bundles ($\pi_{i}:E_{i}\to M_{i})_{i\in\mathbb{N}}$ over the 
projective sequence of manifolds $ \left(  M_{i},\delta_{i}^{j}\right) _{j\geq i}$;

\item[\textbf{(PSBLA 2)}]
For all $\left( i,j \right) \in\mathbb{N}^2$ such that $j\geq i$, one has
\[
\rho_{i}\circ \xi_i^j=T\delta_{i}^{j}\circ\rho_{j}
\]
\item[\textbf{(PSBLA 3)}] For all $\left( i,j \right) \in\mathbb{N}^2$ such that $j\geq i$, one has
\[
\xi_i^j([.,.]_{j})=[\xi_i^j(.),\xi_i^j(.)]_{i}
\]
\item[\textbf{(PSBLA 4)}] For all $i \in\mathbb{N}$  and $x_i\in M$  the kernel $\ker (\rho_i)_{x_i}$ is complemented in the fiber $E_{x_i}$.
\end{description}
\end{definition}

\begin{proposition}\label{P_ProjectiveLimtAlgebroid}(\cite{CaPe}) Let  $\left(  E_{i},\pi_{i},M_{i},\rho_{i}, [.,.]_{i} \right)$ be  a  submersive projective sequence of split  Lie algebroids. Then  
$\left(E:=\underleftarrow{\lim}  E_{i},\pi:=\underleftarrow{\lim}\pi_{i},M:=\underleftarrow{\lim}M_{i},\rho=\underleftarrow{\lim}\rho_{i} \right)$ is  Fr\'echet anchored bundle and $\Delta=\rho(E)$ is a closed distribution on $E$
\end{proposition}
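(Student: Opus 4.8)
The plan is to dispatch the two assertions separately: the first (``Fr\'echet anchored bundle'') is pure projective-limit bookkeeping with the machinery of Appendix \ref{_ProjectiveLimits}, while the second (``$\Delta=\rho(E)$ is a closed distribution'') needs a fibrewise rerun of the splitting argument of Lemma \ref{L_kerrhox}.

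For the first claim I would argue as follows. By \textbf{(PSBLA 1)} the data $(E_i,\xi_i^j)_{j\geq i}$ form a submersive projective sequence of Banach vector bundles over $(M_i,\delta_i^j)_{j\geq i}$, so Propositions \ref{P_ProjectiveLimitOfBanachVectorBundles} and \ref{P_StrongProjectiveLimitOfBanachBundle} give at once that $(E=\underleftarrow{\lim}E_i,\pi=\underleftarrow{\lim}\pi_i,M=\underleftarrow{\lim}M_i)$ is a Fr\'echet vector bundle and that the bonding projections $\delta_i,\lambda_i$ are submersions; applying Proposition \ref{P_ProjectiveLimitOfBanachVectorBundles} to the tangent bundles identifies $TM$ with $\underleftarrow{\lim}TM_i$. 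The coherence relation \textbf{(PSBLA 2)}, $\rho_i\circ\xi_i^j=T\delta_i^j\circ\rho_j$, says precisely that $(\rho_i)$ is a projective sequence of bundle morphisms, so $\rho:=\underleftarrow{\lim}\rho_i:E\to TM$ is well defined over $\mathrm{Id}_M$ and fibrewise equals $\rho_x=\underleftarrow{\lim}(\rho_i)_{x_i}$, which is continuous and linear; reading this in projective limit bundle charts and invoking Proposition \ref{P_ProjectiveLimitsOfDifferentialMaps} shows $\rho$ is smooth in the convenient sense. Hence $(E,\pi,M,\rho)$ is a Fr\'echet anchored bundle.

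For the second claim, fix $x=\underleftarrow{\lim}x_i$ and abbreviate $r_i=(\rho_i)_{x_i}$, write $\ell_i^j$ for the restriction of $\xi_i^j$ to the fibre over $x_j$, and $d_i^j=T_{x_j}\delta_i^j$, so that $d_i^j\circ r_j=r_i\circ\ell_i^j$. Put $\mathbb F_i=r_i((E_i)_{x_i})$. By \textbf{(PSBLA 4)} and the split hypothesis $\ker r_i$ is complemented and $\mathbb F_i$ is a closed Banach subspace of $T_{x_i}M_i$; choose $(E_i)_{x_i}=\ker r_i\oplus\mathbb F'_i$ and let $r'_i:\mathbb F'_i\to\mathbb F_i$ be the resulting Banach isomorphism. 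Then I would run the argument of Lemma \ref{L_kerrhox} at the level of fibres: since each $r'_i$ is an isomorphism, $\ell_i^j$ carries $\mathbb F'_j$ into $\mathbb F'_i$ and $\ker r_j$ into $\ker r_i$, and since $\delta_i^j$ is surjective one gets $d_i^j(\mathbb F_j)=\mathbb F_i$, hence $\ell_i^j(\mathbb F'_j)=\mathbb F'_i$; so $(\mathbb F'_i)_{j\geq i}$ and $(\ker r_i)_{j\geq i}$ are surjective projective systems of Banach spaces. Consequently $E_x=\ker\rho_x\oplus\mathbb F'$ with $\mathbb F'=\underleftarrow{\lim}\mathbb F'_i$, and $\rho_x|_{\mathbb F'}=\underleftarrow{\lim}r'_i$ is a continuous linear bijection onto $\underleftarrow{\lim}\mathbb F_i$. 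Therefore $\Delta_x=\rho_x(E_x)=\underleftarrow{\lim}\mathbb F_i$ is a Fr\'echet space which, each $\mathbb F_i$ being closed in $T_{x_i}M_i$, is a closed subspace of $T_xM=\underleftarrow{\lim}T_{x_i}M_i$; as $x$ was arbitrary, $\Delta=\rho(E)$ is a closed distribution on $M$.

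The hard part will be exactly this fibrewise splitting step, namely that the images $\mathbb F_i$ assemble into a surjective projective system with a closed limit: this is where the surjectivity of the base submersions $\delta_i^j$ (forcing $d_i^j(\mathbb F_j)=\mathbb F_i$) and the complementedness of the kernels from \textbf{(PSBLA 4)} (forcing the complements $\mathbb F'_i$ to be matched by the bonding maps) are genuinely used; everything else reduces to quoting Propositions \ref{P_ProjectiveLimitOfBanachVectorBundles}, \ref{P_StrongProjectiveLimitOfBanachBundle} and \ref{P_ProjectiveLimitsOfDifferentialMaps}.
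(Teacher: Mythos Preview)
The paper does not supply its own proof of this Proposition; it is quoted from \cite{CaPe}. Your argument is precisely what one expects, and its core---the fibrewise splitting showing $\Delta_x=\underleftarrow{\lim}\mathbb F_i$ is closed---is line for line the argument the paper itself runs later as Lemma~\ref{L_kerrhox} (Step~1 of the proof of Theorem~\ref{T_IntegrabilitySPLBanachBundles}), so your approach and the paper's technique coincide.

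One genuine caveat: you write ``By \textbf{(PSBLA 4)} and the split hypothesis $\ker r_i$ is complemented and $\mathbb F_i$ is a closed Banach subspace of $T_{x_i}M_i$'', but \textbf{(PSBLA 4)} as stated in Definition~\ref{D_ProjectiveSequenceofBanachLieAlgebroids} only gives complementedness of the kernel, and that alone does \emph{not} force the range to be closed (think of the inclusion $\ell^1\hookrightarrow\ell^2$). In Lemma~\ref{L_kerrhox} the closedness of $\mathbb F_i$ is available because condition~(2) of Definition~\ref{D_SubmersiveProjectiveLimitOfLocalBanachBundles} explicitly assumes it; here you are relying on an extra hypothesis that is presumably folded into the word ``split'' in \cite{CaPe}. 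In your write-up you should either state closedness of the range as an explicit assumption or point to where \cite{CaPe} guarantees it, rather than presenting it as a consequence of \textbf{(PSBLA 4)}.
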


\begin{remark} Under the assomptions of Proposition \ref{P_ProjectiveLimtAlgebroid} , unfortunately $ [.,.] = \underleftarrow{\lim}[.,.]_{i} $ does not define a Lie bracket on the set of all local sections of $(E,\pi, M)$ but only on section which are projective limit of section of $(E_i,\pi_i,M_i)$. Therefore $\left(  E,\pi,M,\rho, [.,.]\right)$ does not have a Fr\'echet Lie algebroid structure.
\end{remark}

\section{The Banach space $\mathcal{H}_b\left(  \mathbb{F}_{1},\mathbb{F}_{2}\right) $}
\label{__TheFrechetSpaceHF1F2-TheBanachSpaceHbF1F2}

Any Fr\'echet space $\mathbb{F}$ can be realized as the limit of a surjective projective sequence of Banach spaces
$ \left( \mathbb{B}_n,\lambda_n^m \right)_{m \geq n}$. Following \cite{DGV}, 2.3, we can identify $\mathbb{F}$ with the projective limit of the projective sequence
\[
\left( \widehat{\mathbb{B}}_n=\{x=(x_i)\in \prod_{0\leq i\leq n}\mathbb{B}_i:\;\forall j\geq i, \; x_i=\lambda_i^j(x_j)\;\;  \}, \; \widehat{\lambda}_n^m=({\lambda}_n^n,\dots,{\lambda}_n^m )\right )_{m\geq n}.
\]

We denote by $\lambda_n:\mathbb{F} \to \mathbb{B}_n $ and $\widehat{\lambda}_n : \mathbb{F} \to \widehat{\mathbb{B}}_n $ the canonical  surjective projections. Let $(||\;||_n)_{n \in \mathbb{N}}$ be a sequence where $||\;||_n$ is a norm on $\mathbb{B}_n$. In  this way,
\[
\widehat{||\;||}_n=\sup_{0\leq i\leq n}||\;||_i
\]
defines a norm on $ \widehat{\mathbb{B}}_n$. Then
\begin{eqnarray}
\label{eq_SemiNormOnFrechetAssociatedToNormOnBanach}
\hat{\nu}_n=\widehat{||\;||}_n\circ \widehat{\lambda}_n \textrm{ (resp. } \nu_n=||\;||_n\circ {\lambda}_n)
\end{eqnarray}
is the semi-norm on $\mathbb{F}$ associated to the  sequence $(\widehat{||\;||}_n)$ (resp.$(||\;||_n)$). Moreover,  we have $\hat{\nu}_n=\displaystyle\max_{0\leq i\leq n}\nu_i$ and  the topology of $\mathbb{F}$ is defined by $(\hat{\nu}_n)$ or $(\nu_n)$.\\

Let $(\mathbb{F}_{1},\nu^1_n)$ (resp. $(\mathbb{F}_{2},\nu^2_n)$)$\mathbb{\ }$ be a graded Fr\'{e}chet
space.

Recall that a linear map $L:\mathbb{F}_{1}\to \mathbb{F}_{2}$ is \emph{continuous}\index{continuous linear map}\index{linear map!continuous} if
\[
\forall n\in\mathbb{N},\exists k_{n}\in\mathbb{N},\exists C_{n}>0:\forall
x\in\mathbb{F}_{1},\nu_{2}^{n}\left(  L.x\right)  \leq C_{n}\nu_{1}^{k_{n}
}\left(  x\right).
\]
The space $\mathcal{L}\left(  \mathbb{F}_{1},\mathbb{F}_{2}\right)  $ of
continuous linear maps between both these Fr\'{e}chet spaces generally drops
out of the Fr\'{e}chet category. Indeed, $\mathcal{L}\left(  \mathbb{F}
_{1},\mathbb{F}_{2}\right)  $ is a Hausdorff locally convex topological vector
space whose topology is defined by the family of semi-norms $\left\{
p_{n,B}\right\}  $:
\[
p_{n,B}\left(  L\right)  =\displaystyle\sup_{x\in
	B}\left\{  \nu^{2}_{n}\left(  L.x\right) \right\}
\]
where $n\in\mathbb{N}$ and $B$ is any bounded subset of $\mathbb{F}_1$. This topology is not metrizable since the
family $\left\{  p_{n,B}\right\}  $ is not countable.\\
So $\mathcal{L}\left(  \mathbb{F}_{1},\mathbb{F}_{2}\right)  $ will be replaced,
under certain assumptions, by a projective limit of appropriate functional
spaces as introduced in \cite{Gal2}.

We denote by $\mathcal{L}\left(  \mathbb{B}_{1}^{n},\mathbb{B}_{2}^{n}\right) $ the space of linear continuous maps (or equivalently bounded
linear maps because $\mathbb{B}_{1}^{n}$ and $\mathbb{B}_{2}^{n}$ are normed
spaces). We then have the following result (\cite{DGV}, Theorem 2.3.10).

\begin{theorem}
\label{T_HF1F2}
The space of all continuous linear maps between $\mathbb{F}_{1}$ and $\mathbb{F}_{2}$ which can be represented as projective limits
\[
	\mathcal{H}\left(  \mathbb{F}_{1},\mathbb{F}_{2}\right)  =\left\{  \left(
	L_{n}\right)  \in\prod\limits_{n\in\mathbb{N}}\mathcal{L}\left(
	\mathbb{B}_{1}^{n},\mathbb{B}_{2}^{n}\right)  :\underleftarrow{\lim}
	L_{n}\text{ exists}\right\}
\]
\index{$\mathcal{H}\left(  \mathbb{F}_{1},\mathbb{F}_{2}\right)$}is a Fr\'{e}chet space.
\end{theorem}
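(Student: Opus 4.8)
The plan is to realize $\mathcal{H}(\mathbb{F}_1,\mathbb{F}_2)$ as a closed linear subspace of a Fréchet space, and then to invoke the classical fact that a closed subspace of a Fréchet space is itself Fréchet. First I would note that, since $\mathbb{B}_1^n$ and $\mathbb{B}_2^n$ are Banach spaces, each $\mathcal{L}(\mathbb{B}_1^n,\mathbb{B}_2^n)$ is a Banach space for the operator norm $\|\;\|^{\operatorname{op}}_n$, and hence the countable product
\[
\mathcal{P}:=\prod_{n\in\mathbb{N}}\mathcal{L}(\mathbb{B}_1^n,\mathbb{B}_2^n),
\]
with the product topology, equivalently with the increasing sequence of seminorms $q_n\big((L_k)_k\big)=\max_{0\leq k\leq n}\|L_k\|^{\operatorname{op}}_k$, is a Fréchet space: it is metrizable because these seminorms are countable and separating, and complete because a Cauchy sequence in $\mathcal{P}$ is Cauchy in every Banach factor and so converges coordinatewise to an element of $\mathcal{P}$. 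The subspace topology that $\mathcal{H}(\mathbb{F}_1,\mathbb{F}_2)\subseteq\mathcal{P}$ inherits is precisely its projective limit topology.

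Next I would cut out $\mathcal{H}(\mathbb{F}_1,\mathbb{F}_2)$ inside $\mathcal{P}$ by continuous linear equations. A sequence $(L_n)\in\mathcal{P}$ admits a projective limit $\underleftarrow{\lim}L_n$, i.e. the $L_n$ assemble into a morphism of the two projective systems of Banach spaces and hence into a continuous linear map $\mathbb{F}_1\to\mathbb{F}_2$ on threads, if and only if the coherence conditions $\lambda^{m}_{2,n}\circ L_m=L_n\circ\lambda^{m}_{1,n}$ hold for all $m\geq n$, where $\lambda^{m}_{i,n}$ denotes the bonding map of $\mathbb{F}_i$; by Remark \ref{R_ProjectiveSequenceOfBondingsMapsBetweenBanachSpacesDeterminedByConsecutiveRanks} it suffices to impose them at consecutive ranks $m=n+1$. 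For each $n$ I would then consider
\[
\Psi_n:\mathcal{P}\to\mathcal{L}(\mathbb{B}_1^{n+1},\mathbb{B}_2^{n}),\qquad
\Psi_n\big((L_k)_k\big)=\lambda^{n+1}_{2,n}\circ L_{n+1}-L_n\circ\lambda^{n+1}_{1,n},
\]
which is linear, and continuous because the coordinate projections $(L_k)_k\mapsto L_n$ and $(L_k)_k\mapsto L_{n+1}$ are continuous for the product topology while pre- and post-composition with the fixed bounded operators $\lambda^{n+1}_{1,n}$ and $\lambda^{n+1}_{2,n}$ are bounded linear maps between the relevant Banach spaces. Hence each $\ker\Psi_n$ is closed, so
\[
\mathcal{H}(\mathbb{F}_1,\mathbb{F}_2)=\bigcap_{n\in\mathbb{N}}\ker\Psi_n
\]
is a closed linear subspace of the Fréchet space $\mathcal{P}$, and therefore a Fréchet space.

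The only point I expect to require some care, and it is a mild one, is the equivalence between the existence of $\underleftarrow{\lim}L_n$ and the consecutive-rank coherence conditions: one must check that coherence at consecutive ranks propagates to all ranks, which is immediate from the telescoping identity $\lambda^m_{i,n}=\lambda^{n+1}_{i,n}\circ\cdots\circ\lambda^{m}_{i,m-1}$ for bonding maps, and that a coherent family $(L_n)$ does define a continuous linear map on the thread spaces $\mathbb{F}_1$ and $\mathbb{F}_2$, which follows from the description of $\mathbb{F}_i$ as a projective limit recalled in $(\ref{eq_SemiNormOnFrechetAssociatedToNormOnBanach})$ together with the fact that a projective limit of continuous linear maps is continuous and linear. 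Everything else reduces to the standard observation that a closed linear subspace of a Fréchet space is again Fréchet, so no genuine obstruction should arise.
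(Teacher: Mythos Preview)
Your argument is correct and is the standard one: realize $\mathcal{H}(\mathbb{F}_1,\mathbb{F}_2)$ as the closed linear subspace of the Fr\'echet product $\prod_n\mathcal{L}(\mathbb{B}_1^n,\mathbb{B}_2^n)$ cut out by the coherence equations. The paper itself does not supply a proof of this theorem; it simply quotes it from \cite{DGV}, Theorem 2.3.10, where the argument is essentially the one you give (a projective limit of Banach spaces is a closed subspace of their product, hence Fr\'echet).
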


For this sequence $\left(  L_{n}\right) _{n \in \mathbb{N}}  $ of linear maps, for any integer
$0\leq n\leq m$, the following diagram is commutative
\[
\xymatrix{
           \mathbb{B}_{1}^{n}  \ar@{<-}[r]^{\;\;\;{(\delta_1)}_{n}^{m}} \ar[d]_{L_n} & \mathbb{B}_{1}^{m} \ar[d]^{L_m}\\
           \mathbb{B}_{2}^{n} \ar@{<-}[r]^{\;\;\;{(\delta_2)}_{n}^{m}}               &  \mathbb{B}_{2}^{m}\\
}
\]
On $\mathcal{H}\left(  \mathbb{F}_{1},\mathbb{F}_{2}\right) $, the topology can be defined by the sequence of seminorms $p_n$ given by
\[
p_n\left(  L\right)  =\displaystyle\max_{0\leq k\leq n} \sup\left\{  \nu^{2}_{k}\left(  L.x\right)  ,x\in \mathbb{F}_1,\; \nu^1_k(x)=1
\right\}
\]
so that $\left(\mathcal{H}\left(  \mathbb{F}_{1},\mathbb{F}_{2}\right) ,p_n\right)$ is a graded Fr\'echet space.

\begin{remark}
	\label{R_SeminormEquivalentpn}
	 For $l\in\left\lbrace 1,2 \right\rbrace$ , given a graduation $ \left( \nu^l_n \right)$  on a Fr\'echet space $\mathbb{F}_l$, let $\mathbb{B}_l^n$ be  the associated local Banach space and $\delta_l^n:\mathbb{F}_l\to \mathbb{B}_l^n$ the canonical projection.\\
	 The quotient norm  $\tilde{\nu}^l_n$  associated to $\nu^l_n$ is defined by
	\begin{eqnarray}
	\label{eq_DefTildenu}
	\tilde{\nu}^l_n(\delta_n(z))=\sup\{\nu^l_n(y):\; \delta_n(y)=\delta_n(z) \}.
	\end{eqnarray}
	We denote by $(\tilde{\nu}^2_n)^{\operatorname{op}}$ the corresponding operator norm on $\mathcal{L}(\mathbb{B}_1^n,\mathbb{B}_2^n)$.\\
	If $L= \underleftarrow{\lim}L_n$ where $L_n:\mathbb{B}_1^n\to\mathbb{B}_2^n$, then we have
	\[
	(\tilde{\nu}^2_n)^{\operatorname{op}}(L)=\sup\{\tilde{\nu}^2_n(L_n.x),\;\;x\in \mathbb{B}_1^n\;\; \tilde{\nu}^1_n(x)\leq 1\}=\sup\{\nu^2_n(L.x), x\in \mathbb{F}_1, \nu^1(x)\leq 1\}.
	\]
	This implies that
	\[
	p_n(L)=\displaystyle\max_{0\leq i\leq n}(\tilde{\nu}^2_i)^{\operatorname{op}}(L_n).
	\]
\end{remark}

\begin{definition}
	\label{D_UniformlyBoundedOperator}
	Let $(\mathbb{F}_{1},\nu^1_n)$ and  $(\mathbb{F}_{2},\nu^2_n)$ be  graded Fr\'{e}chet spaces.
	A linear map $L:\mathbb{F}_{1}\to \mathbb{F}_{2}$ is called a uniformly bounded operator, if
	\[
	\exists C>O : \forall n\in \mathbb{N}, \; \nu_n(L(x))\leq C\mu_n(x).
	\] 	
\end{definition}

We denote by $\mathcal{H}_b\left(  \mathbb{F}_{1},\mathbb{F}_{2}\right) $\index{$\mathcal{H}_b\left(  \mathbb{F}_{1},\mathbb{F}_{2}\right) $} the set of uniformly bounded operators. Of course $\mathcal{H}_b\left(  \mathbb{F}_{1},\mathbb{F}_{2}\right) $ is contained in $\mathcal{H}\left(  \mathbb{F}_{1},\mathbb{F}_{2}\right)$ and $L\in\mathcal{H}\left(  \mathbb{F}_{1},\mathbb{F}_{2}\right)$  belongs to $\mathcal{H}_b\left(  \mathbb{F}_{1},\mathbb{F}_{2}\right) $ if and only if $||L||_\infty:=\displaystyle\sup_{n\in \mathbb{N}} p_n(L)<\infty$ and so
\[
\mathcal{H}_b\left(  \mathbb{F}_{1},\mathbb{F}_{2}\right) =\left[\mathcal{H}\left(  \mathbb{F}_{1},\mathbb{F}_{2}\right)\right]_b:=\{L\in \mathcal{H}\left(  \mathbb{F}_{1},\mathbb{F}_{2}\right):\; ||L||_\infty <\infty\}
\]
When $\mathbb{F}=\mathbb{F}_1=\mathbb{F}_2$ and $\nu^1_n=\nu^2_n$ for all $n\in \mathbb{N}$, the set $\mathcal{H}\left(  \mathbb{F},\mathbb{F}\right)$ (resp. $\mathcal{H}_b\left(  \mathbb{F},\mathbb{F}\right))$ is simply denoted $\mathcal{H}\left(  \mathbb{F}\right)$ (resp.  $\mathcal{H}_b\left(  \mathbb{F}\right)$). \\

We denote by $\mathcal{IH}_{b}\left(  \mathbb{F}_{1},\mathbb{F}_{2}\right) $\index{$\mathcal{IH}_{b}\left(  \mathbb{F}_{1},\mathbb{F}_{2}\right) $}  (resp. $\mathcal{SH}_{b}\left(  \mathbb{F}_{1},\mathbb{F}_{2}\right) $\index{$\mathcal{SH}_{b}\left(  \mathbb{F}_{1},\mathbb{F}_{2}\right) $}) the set of injective  (resp. surjective)  operators of $\mathcal{H}_b\left(  \mathbb{F}_{1},\mathbb{F}_{2}\right) $ with closed range.
\begin{proposition} (\cite{CaPe})
\label{P_InjectiveSurjectiveBH}
${}$
\begin{enumerate}
		\item
 Each operator $L\in \mathcal{H}\left(  \mathbb{F}_{1},\mathbb{F}_{2}\right) $ has a closed range if and only if,  for each $n\in \mathbb{N}$, the induced operator $L_n:\mathbb{B}_1^n\to\mathbb{B}_2^n$ has  a closed range.
		\item
 $\mathcal{IH}_{b}\left(  \mathbb{F}_{1},\mathbb{F}_{2}\right) $ is an open subset of  $\mathcal{H}_b\left(  \mathbb{F}_{1},\mathbb{F}_{2}\right) $.
		\item
 $\mathcal{SH}_{b}\left(  \mathbb{F}_{1},\mathbb{F}_{2}\right) $ is an open subset of  $\mathcal{H}_b\left(  \mathbb{F}_{1},\mathbb{F}_{2}\right) $.
\end{enumerate}
\end{proposition}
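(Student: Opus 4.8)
The plan is to establish (1) first, since it is the substantial point, and then to deduce (2) and (3) by Banach-type perturbation arguments performed uniformly in the level $n$. Throughout, write $L=\underleftarrow{\lim}L_n$ with $L_n:\mathbb{B}_1^n\to\mathbb{B}_2^n$, let $\lambda_n^1:\mathbb{F}_1\to\mathbb{B}_1^n$ and $\lambda_n^2:\mathbb{F}_2\to\mathbb{B}_2^n$ denote the canonical surjections, and recall $L_n\circ\lambda_n^1=\lambda_n^2\circ L$, so that $\operatorname{range}L_n=\lambda_n^2(\operatorname{range}L)$.

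For (1), the implication ``$\operatorname{range}L$ closed $\Rightarrow$ every $\operatorname{range}L_n$ closed'' I would read off the representation of Theorem \ref{T_HF1F2}: once $\operatorname{range}L$ is a closed, hence Fréchet, subspace of $\mathbb{F}_2$, it is the reduced projective limit of its images $\operatorname{range}L_n$ in the local Banach spaces, on which the subspace norm induced from $\mathbb{B}_2^n$ is equivalent to the quotient norm, so each $\operatorname{range}L_n$ is complete, i.e.\ closed in $\mathbb{B}_2^n$. For the converse, assume every $L_n$ has closed range and take $y=\underleftarrow{\lim}y_n$ in $\overline{\operatorname{range}L}$; then $y_n\in\overline{\operatorname{range}L_n}=\operatorname{range}L_n$, so each fibre $S_n:=L_n^{-1}(y_n)$ is a nonempty closed affine subspace of $\mathbb{B}_1^n$, and from $L_n\circ\lambda_n^{n+1}=\lambda_n^{n+1}\circ L_{n+1}$ the bonding maps restrict to continuous affine maps $S_{n+1}\to S_n$. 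A solution $x$ of $Lx=y$ is exactly a thread of $\underleftarrow{\lim}S_n$, and I would produce one by a Mittag--Leffler argument: the $S_n$ are complete metric spaces, and once the restricted bonding maps $S_{n+1}\to S_n$ are shown to have dense image, $\underleftarrow{\lim}S_n\neq\emptyset$ and we are done.

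I expect this density statement to be the main obstacle of the whole proposition. It amounts to a Mittag--Leffler property for the tower of kernels $(\ker L_n,\lambda_n^{n+1})$: given $x_n\in S_n$, one lifts it through the surjection $\lambda_n^{n+1}$, and the correction needed to land the lift in $S_{n+1}$ lives in $\ker L_{n+1}$ and has to be found arbitrarily close to a prescribed element of $\ker L_n$. Extracting this from the closed-range hypotheses together with the open mapping theorem on the Banach levels $\mathbb{B}_1^n,\mathbb{B}_2^n$ is where the real work lies; it uses in particular that $\lambda_n^{n+1}$ is surjective both on $\mathbb{B}_1^{n+1}$ and, because $L_{n+1}$ has closed range, on $\operatorname{range}L_{n+1}$.

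Granting (1), I would record two characterizations, each obtained by combining (1) with the open mapping theorem for the $L_n$ and the identity $\|L\|_\infty=\sup_n\|L_n\|^{\operatorname{op}}<\infty$ valid on $\mathcal{H}_b$ (Definition \ref{D_UniformlyBoundedOperator}, Remark \ref{R_SeminormEquivalentpn}): first, $L\in\mathcal{IH}_b(\mathbb{F}_1,\mathbb{F}_2)$ if and only if there exists $\ell>0$ with
\begin{equation}
\label{eq_Lisobounded}
\ell^{-1}\,\nu^1_n(x)\le\nu^2_n(Lx)\le\ell\,\nu^1_n(x)\qquad\text{for all }n\in\mathbb{N},\ x\in\mathbb{F}_1;
\end{equation}
second, $L\in\mathcal{SH}_b(\mathbb{F}_1,\mathbb{F}_2)$ if and only if there exists $\ell>0$ such that for every $n$ and every $y\in\mathbb{F}_2$ there is $x\in\mathbb{F}_1$ with $Lx=y$ and $\nu^1_n(x)\le\ell\,\nu^2_n(y)$. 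From the first, (2) follows: if $L$ satisfies $(\ref{eq_Lisobounded})$ and $L'\in\mathcal{H}_b$ with $\|L'-L\|_\infty<\tfrac{1}{2\ell}$, then $\nu^2_n(L'x)\ge\nu^2_n(Lx)-\|L'-L\|_\infty\,\nu^1_n(x)\ge\tfrac{1}{2\ell}\,\nu^1_n(x)$ for all $n,x$, so every $L'_n$ is bounded below, hence injective with closed range, and (1) gives $L'\in\mathcal{IH}_b$. From the second, (3) follows by successive approximation: writing $L'=L+R$ with $\|R\|_\infty<\tfrac{1}{\ell}$, given $y$ choose $x_0$ with $Lx_0=y$, $\nu^1_n(x_0)\le\ell\,\nu^2_n(y)$, then inductively $x_{k+1}$ with $Lx_{k+1}=-Rx_k$ and $\nu^1_n(x_{k+1})\le\ell\|R\|_\infty\,\nu^1_n(x_k)$, so $\nu^1_n(x_k)\le(\ell\|R\|_\infty)^k\ell\,\nu^2_n(y)$; the series $x=\sum_{k\ge0}x_k$ converges in the Fréchet space $\mathbb{F}_1$, and continuity of $L'$ with $Rx_k\to0$ yields $L'x=y$ and $\nu^1_n(x)\le\tfrac{\ell}{1-\ell\|R\|_\infty}\,\nu^2_n(y)$ for all $n$, so $L'\in\mathcal{SH}_b$. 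The routine points (smoothness and continuity of the maps built, independence of the constants from $n$) are handled exactly as in the Banach case, so the one genuinely new ingredient is the level-by-level nonemptiness argument of (1).
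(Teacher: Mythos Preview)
The paper only sketches part (2), referring to \cite{CaPe} for the full proposition; parts (1) and (3) receive no argument in the text. For (2) your approach and the paper's coincide: both establish the two-sided estimate you label (\ref{eq_Lisobounded}) and then argue that a perturbation small in $\|\cdot\|_\infty$ preserves a uniform lower bound at every level $n$, so each perturbed $L'_n$ is injective with closed range, and one invokes (1) to conclude $L'\in\mathcal{IH}_b$. Your neighbourhood radius $\tfrac{1}{2\ell}$ is the right one; the paper's computation contains evident typos and in particular seems to identify the lower-bound constant in (\ref{eq_Lisobounded}) with $\|L\|_\infty^{-1}$, which is not automatic.

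Your treatments of (1) and (3) go beyond what the paper provides, so there is nothing to compare against there. The Mittag--Leffler scheme for the hard direction of (1) is the natural route, and you have correctly located the crux, namely the density of the restricted bonding maps $S_{n+1}\to S_n$; but you do not actually establish it, so your argument for (1) remains a plan with an acknowledged gap rather than a proof. One further point worth flagging: your characterization of $\mathcal{IH}_b$ via the uniform two-sided estimate presupposes both that each $L_n$ is injective and that the lower bounds $c_n=\inf_{\nu^1_n(x)=1}\nu^2_n(L_nx)$ are bounded away from zero; neither follows immediately from the bare hypotheses ``$L$ uniformly bounded, injective, closed range,'' and the paper's sketch glosses over the same step. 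Your successive-approximation argument for (3) is the standard Neumann-type proof and is sound as outlined.
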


 We will give the sketch  of the proof of Point (2) since some arguments used in this proof are also useful for the proof of Theorem \ref{T_IntegrabilitySPLBanachBundles}:\\

 \begin{proof}

(2) Consider an  injective  operator $L\in \mathcal{H}\left(\mathbb{F}_1,\mathbb{F}_2\right)$. According to the representation $\mathbb{F}_i=\underleftarrow{\lim} \mathbb{B}_i^n$ as a projective limit  of a
	projective Banach sequence $\left( \mathbb{B}_i^n,(\delta_i)_n^m\right) _{m \geq n} $, we have a sequence of linear operators $L_n: \mathbb{B}_1^n\to \mathbb{B}_2^n$	such that $L=\underleftarrow{\lim}L_n$ (cf. Theorem \ref{T_HF1F2}). Considering each
 \[
 \mathbb{F}_i=\{ (x_n)\in \displaystyle\prod_{n\in \mathbb{N}} \mathbb{B}_i^n:\;\forall m\geq n,\; x_n=(\delta_2)_n^m(x_m) \}
 \]
, then if $x=(x_n)\in \mathbb{F}_1$ then $L(x)=(L_n(x_n))\in \mathbb{F}_2$. Thus it is clear that $L$ is injective if and only if  $L_n$ is injective for all $n\in \mathbb{N}$. \\

Now if $L\in \mathcal{IH}_{b}\left( \mathbb{F}_{1},\mathbb{F}_{2}\right) $,  then $L_n$ is an isomorphism from $\mathbb{B}_1^n$  onto  its range and so we have
	\begin{eqnarray}
	\label{eq_Lniso}
		\frac{1}{\ell_n}.\tilde{\nu}^1_n(x)\leq \tilde{\nu}^2_n(L_n(x)\leq \ell_n.\tilde{\nu}^1_n(x)
	\end{eqnarray}
	for all $x\in \mathbb{B}_1^n$, all $n\in \mathbb{N}$, where $\tilde{\nu}^i_n$ is the quotient norm of $\nu^i_n$ on $\mathbb{B}_i^n$  for $i \in {1,2}$,  and
	\[
	\ell_n=(\tilde{\nu}^2_n)^{\operatorname{op}}(L)=\sup\{\displaystyle\frac{\tilde{\nu}^2_n(L_n(x))}{\tilde{\nu}^1_n(x)}\;:\; x\not=0\}.
	\]
	Since $\delta^2_n$ is the canonical projection of $\mathbb{F}_2$ on $\mathbb{B}_2^n$ and $\nu^2_n\circ  \delta_n=\tilde{\nu}^2_n$, we obtain
	\begin{eqnarray}\label{eq_Liso}
		\frac{1}{\ell_n}{\nu}^1_n(x)\leq {\nu}^2_n(L(x))\leq \ell_n.{\nu}^1_n(x)
	\end{eqnarray}
	for all $x\in \mathbb{F}_1$ and $n\in \mathbb{N}$. But we have $\ell_n\leq ||L||_\infty$ and we finally obtain
	\begin{eqnarray}
	\label{eq_Lisobounded}
		\frac{1}{\ell}{\nu}^1_n(x)\leq {\nu}^2_n(L(x))\leq \ell.{\nu}^1_n(x)
	\end{eqnarray}
	for all $x\in \mathbb{F}_1$, all $n\in \mathbb{N}$ and where $\ell=||L||_\infty$.
	
	Fix some $L\in \mathcal{H}_b\left(  \mathbb{F}_{1},\mathbb{F}_{2}\right) $ and set $\ell=||L||_\infty$, we consider the open set
	$$W=\{T\in \mathcal{H}_b\left(  \mathbb{F}_{1},\mathbb{F}_{2}\right),\;:\; ||T-L||_\infty<\frac{\ell}{2}\}$$
	Fix some $n\in \mathbb{N}$. For any $x\in \mathbb{F}_1$  and $T\in W$, we have
	\[
	\nu^1_n(x)-\nu^1_n(T(x))\leq \nu^1_n(T- L)(x)\leq p_n(T-L).\nu^1_n(x)\leq ||T-L||_\infty.\nu^1_n(x)\leq \frac{\ell}{2}\nu^1_n(x).	
	\]
	This implies that
	\begin{eqnarray}
	\label{eq_injectiveT}
		\nu^2_n(T(x))\geq \frac{\ell}{2}\nu^1_n(x).
	\end{eqnarray}
	Since  $(\nu^i_n)$ is a separating sequence of semi-norms, it follows that $L$ is injective. \\
	Now taking in account inequality (\ref{eq_injectiveT}) and relation $\tilde{\nu}^i_n=\nu^i_n\circ (\delta_i)_n$, for $T\in W$ and  each $n\in \mathbb{N}$, we have
	\[
	\tilde{\nu}^2_n(T_n(x))\leq \displaystyle\frac{3 \ell}{2} \nu^1_n(x)\leq 3 \tilde{\nu}^2_n(T_n(x))
	\]	
	for all $x\in \mathbb{B}_1^n$. It follows that $T_n$ is closed and so $T$ is closed (cf. 1.).
	Finally,  $W$  is an open neighbourhood of $L$ contained in $\mathcal{IH}_{b}\left(  \mathbb{F}_{1},\mathbb{F}_{2}\right) $, which ends the proof of (2).\\
	
\end{proof}

From this Proposition we have

\begin{theorem}(\cite{CaPe})
\label{T_UniformlyBoundedProperties}
${}$
\begin{enumerate}
\item
The Banach space $\mathcal{H}_b(\mathbb{F})$ has a Banach-Lie algebra structure and the set $\mathcal{GH}_b(\mathbb{F})$ of uniformly bounded isomorphisms of $\mathbb{F}$ is open in $\mathcal{H}_{b}(  \mathbb{F})$.
\item
$\mathcal{GH}_b(\mathbb{F})$ has a structure of  Banach-Lie group whose Lie algebra is $\mathcal{H}_b(\mathbb{F})$.
\item
If $\mathbb{F}$ is identified with the projective $\underleftarrow{\lim} \mathbb{B}^n$ we denote by $\exp_n:\mathcal{L}(\mathbb{B}_n) \to \mathcal{GL}(\mathbb{B}_n)$, then we a have a well defined smooth map $\exp:=\underleftarrow{\lim}\exp_n: \mathcal{H}_b(\mathcal{F}) \to \mathcal{GH}_b(\mathbb{F})$ which is a diffeomorphism from an open set of $0 \in \mathcal{H}_b(\mathcal{F})$ onto a a neighbourhood of $\operatorname{Id}_\mathbb{F}$.
\end{enumerate}
\end{theorem}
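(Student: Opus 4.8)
The plan is to identify $\bigl(\mathcal{H}_b(\mathbb{F}),\circ\bigr)$ as a unital Banach algebra and then to transport, essentially verbatim, the classical theory of the Banach--Lie group of units of a unital Banach algebra. First I would check that composition is internal and submultiplicative on $\mathcal{H}_b(\mathbb{F})$. If $L=\underleftarrow{\lim}L_n$ and $T=\underleftarrow{\lim}T_n$ lie in $\mathcal{H}_b(\mathbb{F})$, then from the coherence relations $\delta_n^m\circ L_m=L_n\circ\delta_n^m$ (and the same for $T$) an immediate induction gives $\delta_n^m\circ(L_m\circ T_m)=(L_n\circ T_n)\circ\delta_n^m$, so $L\circ T=\underleftarrow{\lim}(L_n\circ T_n)$ exists in $\mathcal{H}(\mathbb{F})$; and by Remark \ref{R_SeminormEquivalentpn} together with submultiplicativity of each operator norm,
\[
p_n(L\circ T)=\max_{0\le k\le n}(\tilde\nu_k)^{\operatorname{op}}(L_k\circ T_k)\le\max_{0\le k\le n}(\tilde\nu_k)^{\operatorname{op}}(L_k)\,(\tilde\nu_k)^{\operatorname{op}}(T_k)\le p_n(L)\,p_n(T),
\]
whence $\|L\circ T\|_\infty\le\|L\|_\infty\,\|T\|_\infty$ and $L\circ T\in\mathcal{H}_b(\mathbb{F})$. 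Thus $\mathcal{H}_b(\mathbb{F})$ is a unital Banach algebra with unit $\operatorname{Id}_\mathbb{F}$, and the commutator $[L,T]=L\circ T-T\circ L$ is a continuous Lie bracket, giving the first half of (1).

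For the remainder of (1) and for (2) I would use only standard Banach-algebra facts. By definition $\mathcal{GH}_b(\mathbb{F})$ is the group of units of $\mathcal{H}_b(\mathbb{F})$ (an operator is a uniformly bounded isomorphism precisely when it is invertible in this algebra, the projective-limit inverse being automatically coherent). Hence $\mathcal{GH}_b(\mathbb{F})$ is open: if $L\in\mathcal{GH}_b(\mathbb{F})$ and $\|T-L\|_\infty<\|L^{-1}\|_\infty^{-1}$, the Neumann series $\sum_{k\ge0}\bigl(L^{-1}(L-T)\bigr)^{k}$ converges in $\mathcal{H}_b(\mathbb{F})$ by the submultiplicativity just proved, so $T=L\bigl(\operatorname{Id}-L^{-1}(L-T)\bigr)$ is invertible in $\mathcal{H}_b(\mathbb{F})$. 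Being an open subset of the Banach space $\mathcal{H}_b(\mathbb{F})$, $\mathcal{GH}_b(\mathbb{F})$ is a Banach manifold modelled on $\mathcal{H}_b(\mathbb{F})$; multiplication is the restriction of the continuous bilinear, hence analytic, map $\circ$, and inversion is analytic because $L\mapsto L^{-1}$ is locally given by a convergent Neumann power series in $L-L_0$. Therefore $\mathcal{GH}_b(\mathbb{F})$ is a Banach--Lie group, its Lie algebra is $T_{\operatorname{Id}}\mathcal{GH}_b(\mathbb{F})=\mathcal{H}_b(\mathbb{F})$, and the induced bracket is the commutator of the previous step; this proves (2).

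For (3), I would first note that the exponential series $\exp_n(A)=\sum_{k\ge0}A^{k}/k!$ on $\mathcal{L}(\mathbb{B}^n)$ is compatible with the bonding maps: if $(A_n)$ is coherent then $\delta_n^m\circ A_m^{k}=A_n^{k}\circ\delta_n^m$ for every $k$, so $\bigl(\exp_n(A_n)\bigr)$ is coherent and $\exp:=\underleftarrow{\lim}\exp_n$ is well defined on coherent sequences, in particular on $\mathcal{H}_b(\mathbb{F})$. For $L\in\mathcal{H}_b(\mathbb{F})$ the same submultiplicative estimate gives $p_n(\exp(L))\le e^{p_n(L)}$, hence $\|\exp(L)\|_\infty\le e^{\|L\|_\infty}<\infty$, while $\exp(-L)$ is a two-sided inverse of $\exp(L)$ in $\mathcal{H}_b(\mathbb{F})$; so $\exp$ maps $\mathcal{H}_b(\mathbb{F})$ into $\mathcal{GH}_b(\mathbb{F})$. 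The estimate also shows that $\sum_k L^{k}/k!$ converges in the Banach space $\mathcal{H}_b(\mathbb{F})$ uniformly on bounded sets, so $\exp$ is a convergent power series, hence analytic and smooth (consistently with Proposition \ref{P_ProjectiveLimitsOfDifferentialMaps}, since it is the projective limit of the smooth maps $\exp_n$). Finally $D_0\exp=\operatorname{Id}_{\mathcal{H}_b(\mathbb{F})}$, so the Banach-space inverse function theorem yields an open neighbourhood of $0$ on which $\exp$ is a diffeomorphism onto an open neighbourhood of $\operatorname{Id}_\mathbb{F}$ in $\mathcal{GH}_b(\mathbb{F})$, with local inverse $\log$.

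The only genuinely non-formal point is the submultiplicativity of $\|\cdot\|_\infty$ across all the local quotients $\mathbb{B}^n$ simultaneously (the first step): once this is in place, assertions (1)--(3) reduce to the textbook theory of the group of units of a unital Banach algebra, plus the routine bookkeeping that every series involved (Neumann, exponential) respects the projective-limit structure.
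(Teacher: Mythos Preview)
Your argument is correct. The paper itself does not prove this theorem: it is quoted from \cite{CaPe} and only prefaced by ``From this Proposition we have'', the proposition in question being Proposition~\ref{P_InjectiveSurjectiveBH}, which establishes that $\mathcal{IH}_b(\mathbb{F}_1,\mathbb{F}_2)$ and $\mathcal{SH}_b(\mathbb{F}_1,\mathbb{F}_2)$ are open in $\mathcal{H}_b(\mathbb{F}_1,\mathbb{F}_2)$. So the only hint the paper gives toward part~(1) is that $\mathcal{GH}_b(\mathbb{F})=\mathcal{IH}_b(\mathbb{F})\cap\mathcal{SH}_b(\mathbb{F})$ is an intersection of two open sets; parts~(2) and~(3) are simply deferred to the reference.

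Your route is genuinely different and more self-contained: you recognise $\bigl(\mathcal{H}_b(\mathbb{F}),\circ,\|\cdot\|_\infty\bigr)$ as a unital Banach algebra via the submultiplicativity $\|L\circ T\|_\infty\le\|L\|_\infty\|T\|_\infty$, and then everything---openness of the unit group, analyticity of multiplication and inversion, the Lie-group structure, and the local diffeomorphism property of $\exp$---is imported wholesale from the classical theory of the group of units of a Banach algebra. This buys you a proof that does not rely on the separate injective/surjective analysis of Proposition~\ref{P_InjectiveSurjectiveBH}, and it makes the exponential in~(3) entirely transparent (a convergent power series in a Banach algebra, coherent level by level). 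The paper's indicated route, by contrast, would presumably reconstruct the Lie-group structure from the openness of $\mathcal{IH}_b$ and $\mathcal{SH}_b$ together with the levelwise description of inversion; that approach keeps the projective-limit viewpoint in the foreground but is less economical. One small point worth making explicit in your write-up: the identification of $\mathcal{GH}_b(\mathbb{F})$ with the group of units requires that ``uniformly bounded isomorphism'' be read as ``invertible in $\mathcal{H}_b(\mathbb{F})$'' (i.e.\ with uniformly bounded inverse), not merely ``Fr\'echet isomorphism lying in $\mathcal{H}_b(\mathbb{F})$''; you assert this equivalence but it is really the intended definition rather than a lemma.
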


\section{A theorem of existence of ODE}\label{ExistenceODE}

 The following result is in fact a reformulation in our context of Theorem 1 in \cite{Lob}.  
 
 \begin{theorem}
	\label{T_SolutionODEOnFrechetSpaces}
	Let $\mathbb{F}$ a Fr\'echet space realized as the limit of a surjective projective sequence of Banach spaces
	$ \left( \mathbb{B}_n,\lambda_n^m \right)_{m \geq n}$ whose topology is defined by the sequence of seminorms $\left( \nu_n\right)_{n \in \mathbb{N}}$.  Let $I$ be an open interval in $\mathbb{R}$ and $U$ be an open set of $I\times \mathbb{F}$. Then  $U$ is a surjective projective limit of open sets $U_n \subset I \times \mathbb{B}_n$. Consider a smooth map $f=\underleftarrow{\lim}f_n: U \to \mathbb{F}$, projective limit of maps $f_n: U_n\to \mathbb{B}_n$.
\footnote{This means that we have: $ \forall m\geq n, \; \lambda_n^m\circ f_m=f_n\circ (Id_\mathbb{R}\times \lambda_n^m)$}
	Assume that  for every point $\left( t,x\right)\in U$, and every $n\in \mathbb{N} $, there exists an integrable function $K_n>0$ such that
	\begin{equation}
		\label{eq_LipschitzAssumption}
		\forall \left( (t,x), (t,x')\right) \in U^2, \; \nu_n(f(t,x)-f(t,x'))\leq K_n(t)\nu_n(x-x').
	\end{equation}
	and consider the differential equation:
	\begin{equation}
		\label{eq_ODEInFrechetSpace}
		\dot{x}=\phi \left( t,x \right).
	\end{equation}
	\begin{enumerate}
		\item
For any $(t_0,x_0)\in U$, there exists  $\alpha >0$  with $I_\alpha=[t_0-\alpha, t_0+\alpha]\subset I$, an open pseudo-ball $V=B(x_0,r)\subset U$ and a map $\Phi: I_\alpha\times I_\alpha \times V\to \mathbb{F}$ such that
		\[
		t\mapsto \Phi(t,\tau, x)
		\]
		is the unique solution of (\ref{eq_ODEInFrechetSpace})
		with initial condition $\Phi(\tau,\tau, x )=x$ for all $x\in V$.
		\item
$V$ is the  projective limit of the open  balls  $V_n$ of $\mathbb{B}_n$.  For each $n\in \mathbb{N}$, the curve $t\mapsto \lambda_n\circ \Phi(t,\tau, \lambda_n(x))$ is the unique solution $\gamma:I_\alpha\to \mathbb{B}_n$ of the differential equation $\dot{x}_n=\phi_n \left( t,x_n \right)$ with initial condition $\gamma(\tau)=\lambda_n(x)$.\\
	\end{enumerate}
\end{theorem}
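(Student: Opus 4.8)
The plan is to prove the existence--uniqueness statement by running a Picard iteration directly in the Fr\'echet space $\mathbb{F}$, exploiting the fact that the Lipschitz estimate (\ref{eq_LipschitzAssumption}) couples each seminorm $\nu_n$ only to itself, and then to recover the smoothness of $\Phi$ and assertion (2) by passing to the projective limit of the Banach-level solution operators. The classical Banach-space Cauchy--Lipschitz theorem enters only at the very end, through smooth dependence on initial conditions at each fixed level $n$.

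First I would localise. Since the conclusion is local around $(t_0,x_0)$, after shrinking $U$ I may assume $I_\beta\times B(x_0,r)\subset U$ with $B(x_0,r)=\{x:\hat\nu_N(x-x_0)<r\}$ a pseudo-ball cut out by the single increasing seminorm $\hat\nu_N$ (equivalently by $\nu_0,\dots,\nu_N$); this is precisely what realises $U$ as a surjective projective limit of open sets $U_n\subset I\times\mathbb{B}_n$. As $f$ is continuous and $\bar I_\beta$ is compact, $M_n:=\sup_{s\in\bar I_\beta}\nu_n(f(s,x_0))<\infty$ for every $n$. For an initial datum $(\tau,x)\in I_\alpha\times V$ (with $V=B(x_0,\rho)$ a smaller pseudo-ball, $\alpha\leq\beta$, to be fixed) I set $x_0(t)\equiv x_0$ and $x_{k+1}(t)=x+\int_\tau^t f(s,x_k(s))\,ds$. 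By induction, as long as the graphs of $x_0,\dots,x_k$ lie in $B(x_0,r)\subset U$, the estimate (\ref{eq_LipschitzAssumption}) yields, for every $n$, $\nu_n\big(x_{j+1}(t)-x_j(t)\big)\leq \tfrac{1}{j!}\big(\int_{I_\alpha}K_n\big)^{j}\big(2\alpha M_n+\rho\int_{I_\alpha}K_n\big)$, hence $\nu_n(x_k(t)-x_0)\leq \rho+\big(2\alpha M_n+\rho\int_{I_\alpha}K_n\big)e^{\int_{I_\alpha}K_n}$. Only the finitely many indices $n\leq N$ enter $\hat\nu_N$, and $\int_{I_\alpha}K_n\to 0$ as $\alpha\to 0$ by integrability of $K_n$; so I can choose $\alpha$ and then $\rho$ so that the right-hand side is $<r$ for all $n\leq N$, which closes the induction and keeps every iterate in $U$. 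Once this is secured the estimate holds for \emph{all} $n$, so $(x_k)$ is Cauchy in each seminorm uniformly on $I_\alpha$; its limit $t\mapsto\Phi(t,\tau,x)$ is continuous into $\mathbb{F}$, satisfies the integral equation, hence is $C^1$ and solves (\ref{eq_ODEInFrechetSpace}) with $\Phi(\tau,\tau,x)=x$.

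Uniqueness is a Gronwall argument applied seminorm by seminorm: if $x(\cdot)$ and $\tilde x(\cdot)$ solve (\ref{eq_ODEInFrechetSpace}) with the same initial value, then $\nu_n(x(t)-\tilde x(t))\leq\int_\tau^t K_n(s)\nu_n(x(s)-\tilde x(s))\,ds$, so $\nu_n(x-\tilde x)\equiv 0$ for every $n$, and the $\nu_n$ separate points. For the projective-limit statements, composing a solution in $\mathbb{F}$ with $\lambda_n$ and using the coherence $\lambda_n^m\circ f_m=f_n\circ(\mathrm{Id}\times\lambda_n^m)$ shows that $t\mapsto\lambda_n\Phi(t,\tau,\lambda_n x)$ solves $\dot x_n=f_n(t,x_n)$ with initial value $\lambda_n(x)$; by uniqueness at level $n$ it is the unique such solution, these operators $\Phi_n$ are mutually compatible, and $\Phi=\underleftarrow{\lim}\Phi_n$. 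Each $\Phi_n$ is smooth on $I_\alpha\times I_\alpha\times V_n$ by the classical theorem on smooth dependence on initial conditions in Banach spaces applied to the smooth map $f_n$; hence $\Phi$ is smooth in the convenient sense by Proposition \ref{P_ProjectiveLimitsOfDifferentialMaps}. This establishes (1) and (2).

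The main obstacle is the uniformity of the interval of existence: applying the Banach-level theorem level by level would only give intervals $I_{\alpha_n}$ that may shrink with $n$, so ``solve at each level, then take the limit'' does not directly work. The key point --- and the reason the hypothesis is phrased with the \emph{same} seminorm $\nu_n$ on both sides of (\ref{eq_LipschitzAssumption}), rather than the general continuity inequality $\nu_n(\,\cdot\,)\leq C_n\,\nu_{k_n}(\,\cdot\,)$ --- is that the Picard iterates can be trapped in a fixed pseudo-ball $B(x_0,r)$ using only finitely many seminorms, so one $\alpha$ works for all levels simultaneously; the integrability of $K_n$ is exactly what makes $\int_{I_\alpha}K_n\to 0$ and hence guarantees such an $\alpha$ exists.
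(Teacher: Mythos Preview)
The paper does not give its own proof of this statement: Theorem \ref{T_SolutionODEOnFrechetSpaces} is presented in Appendix \ref{ExistenceODE} explicitly as ``a reformulation in our context of Theorem~1 in \cite{Lob}'', with no further argument. Your Picard-iteration proof is a correct reconstruction of the expected argument: the diagonal Lipschitz condition (\ref{eq_LipschitzAssumption}) makes the successive approximations Cauchy in every seminorm, while confinement to the pseudo-ball $B(x_0,r)$ involves only the finitely many seminorms $\nu_0,\dots,\nu_N$, which is precisely what yields a single $\alpha$ valid at all levels --- the obstacle you isolate in your final paragraph. One harmless slip: in your bound for $\nu_n(x_{j+1}-x_j)$ the base constant should be $\rho+2\alpha M_n$ (from $\nu_n(x_1-x_0)\leq \nu_n(x-x_0)+2\alpha M_n$) rather than $2\alpha M_n+\rho\int_{I_\alpha}K_n$; this affects neither the convergence nor the choice of $\alpha$ and $\rho$.
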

 From this theorem we obtain easily:\\
\begin{corollary}
	\label{C_ODEVectorFields}
	Let $U=\underleftarrow{\lim}U_n$ be an open subset of $\mathbb{F}$ and  $X=\underleftarrow{\lim}X_n: U \to \mathbb{F}$ a projective limit of smooth maps $X_n: U_n\to \mathbb{B}_n$. Assume that for every $n\in \mathbb{N}$ we have
	\begin{equation}
		\label{eq_LipschitzAssumptionCor}
		\forall \left( (t,x),(t,x')\right) \in U^2, \;\nu_n(X(x)-X(x'))\leq K_n \nu_n(x-x').
	\end{equation}
	For $x_0\in U$, let $B(x_0,2r)=\{x\in \mathbb{F},\;:\; \nu_{n_i}(x-x_0)<2r, \; 1\leq i\leq k\}$ be a pseudo-ball contained in $U$. Let us set
	\begin{description}
		\item $C_1=\max_{1\leq i\leq k} {K_{n_i}}$
		\item $C_2=\displaystyle\sup_{z \in B(x_0,r)}\left\{\max_{1\leq i\leq k}\nu_{n_i}(f(z))\right\}$.
	\end{description}
Then for any $\alpha>0$ such that $\alpha e^{2\alpha C_1}\leq \frac{r}{2C_2}$,  there
	exists a neighbourhood $V=B(x_0,r)$  and a smooth map $\phi_\alpha:I_\alpha\times V$ such that $t\mapsto \phi^\alpha(t,x)$ is the unique solution of $\dot{x}=X(x)$ defined on $I_\alpha$ with initial condition $\phi^\alpha(0,x)=x$. Moreover if $V_n=\lambda_n(V)$, consider $\phi_n^\alpha:I_\alpha\times V_n\to \mathbb{B}_n$ defined by $\phi^\alpha_n=\lambda_n\circ \phi^\alpha$; For each $z\in V_n$, the map  $t\mapsto \phi^\alpha(t,z)$ is the unique solution of the differential equation $\dot{x}_n=X_n(x_n)$  defined on $I_\alpha$ with initial condition  $\phi^{\alpha}(0, z)=z$
\end{corollary}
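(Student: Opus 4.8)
The plan is to derive the corollary directly from Theorem~\ref{T_SolutionODEOnFrechetSpaces} by regarding the vector field $X$ as a time--independent right-hand side, and to obtain the explicit range for $\alpha$ by inspecting the Picard-type construction that underlies that theorem.

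First I would set $I=\mathbb{R}$ and apply Theorem~\ref{T_SolutionODEOnFrechetSpaces} to the open set $\mathbb{R}\times U\subset\mathbb{R}\times\mathbb{F}$ and to the projective limit map $f=\underleftarrow{\lim}f_n$ given by $f(t,x)=X(x)$, $f_n(t,x)=X_n(x)$. Hypothesis (\ref{eq_LipschitzAssumptionCor}) is exactly the Lipschitz estimate (\ref{eq_LipschitzAssumption}) with the constant (hence integrable on every compact interval) majorant $K_n(t)\equiv K_n$, so the theorem applies at $(0,x_0)$ and provides $\alpha_0>0$, a pseudo-ball $V=B(x_0,r)$ and a smooth $\Phi\colon I_{\alpha_0}\times I_{\alpha_0}\times V\to\mathbb{F}$ with $t\mapsto\Phi(t,\tau,x)$ the unique solution of (\ref{eq_ODEInFrechetSpace}) (for $\phi(t,x)=X(x)$) satisfying $\Phi(\tau,\tau,x)=x$. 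Because $f$ is autonomous, uniqueness forces $\Phi(t,\tau,x)=\Phi(t-\tau,0,x)$; setting $\phi^{\alpha}(t,x):=\Phi(t,0,x)$ therefore gives the flow in the statement, smoothness being inherited from $\Phi$. The compatibility with the Banach factors is then nothing but point~(2) of Theorem~\ref{T_SolutionODEOnFrechetSpaces}, which says that $\phi^{\alpha}_n:=\lambda_n\circ\phi^{\alpha}$ satisfies $\dot x_n=X_n(x_n)$ with the prescribed initial condition and is the unique such solution; and uniqueness in $\mathbb{F}$ follows from uniqueness in each $\mathbb{B}_n$ together with the fact that the family $(\nu_n)$ separates points.

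The only non-formal point --- and hence the main obstacle --- is the quantitative bound $\alpha e^{2\alpha C_1}\le r/(2C_2)$, which refines the mere existence of some $\alpha_0$ furnished by Theorem~\ref{T_SolutionODEOnFrechetSpaces}. To obtain it I would re-run, seminorm by seminorm, the Picard iteration used to prove that theorem (equivalently, Theorem~1 of~\cite{Lob}): for $x\in B(x_0,r)$ and $|t|\le\alpha$, put $\gamma_0(t)\equiv x$ and $\gamma_{m+1}(t)=x+\int_0^{t}X(\gamma_m(s))\,ds$. Testing only against the finitely many seminorms $\nu_{n_1},\dots,\nu_{n_k}$ and using $C_2$ to bound $\nu_{n_i}(\gamma_1-\gamma_0)\le\alpha C_2$, then (\ref{eq_LipschitzAssumptionCor}) with $C_1=\max_i K_{n_i}$ to bound, inductively, $\nu_{n_i}(\gamma_{m+1}-\gamma_m)\le (C_2/C_1)(C_1\alpha)^{m+1}/(m+1)!$, one gets $\nu_{n_i}(\gamma_m(t)-x)\le \alpha C_2\,e^{\alpha C_1}$ for all $m$; hence every iterate stays in $B(x_0,2r)\subset U$ as soon as $\alpha C_2 e^{\alpha C_1}\le r$, a condition implied (with room to spare, so as also to control the contraction and the smooth dependence on the initial point) by the stated inequality $\alpha e^{2\alpha C_1}\le r/(2C_2)$. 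The iteration then converges uniformly on $I_\alpha$, its limit solves $\dot x=X(x)$, and performing this bookkeeping simultaneously in all Banach factors $\mathbb{B}_n$ --- exactly as in the proof of Corollary~4.2 of~\cite{ChSt} --- yields $\phi^{\alpha}$ on the whole of $I_\alpha\times V$ with the announced interval of definition, which completes the proof.
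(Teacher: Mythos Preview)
Your proposal is correct and follows the same route the paper indicates: the paper gives no separate proof of Corollary~\ref{C_ODEVectorFields} beyond the line ``From this theorem we obtain easily'', so deriving existence, uniqueness and the projective-limit compatibility from Theorem~\ref{T_SolutionODEOnFrechetSpaces} applied to the autonomous $f(t,x)=X(x)$ is exactly what is intended. Your additional step of re-running the Picard iteration against the finitely many seminorms $\nu_{n_1},\dots,\nu_{n_k}$ to justify the explicit bound $\alpha e^{2\alpha C_1}\le r/(2C_2)$ is the natural way to make the quantitative part precise, and is consistent with how the bound is used later in Step~3 of the proof of Theorem~\ref{T_IntegrabilitySPLBanachBundles}.
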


\begin{remark}
	\label{R_OneParameterGroup}
	If  $X=\underleftarrow{\lim}X_n$ is a  smooth vector field defined on an open set $U=\underleftarrow{\lim}U_n$ of $\mathbb{F}$, which satisfies assumption (\ref{eq_LipschitzAssumptionCor}), as classically,  according to Corollary \ref{C_ODEVectorFields}, the map $\operatorname{Fl}^X_t:=\operatorname{Fl}^X(t,\;)$ is \emph{the local flow of $X$}\index{local flow} that is $\operatorname{Fl}^X_t$ fullfils the  properties of a $1$-parameter group:
	\begin{description}
		\item $\operatorname{Fl}^X_0=Id_V$
		\item $\operatorname{Fl}^X_t\circ \operatorname{Fl}^X_s=\operatorname{Fl}^X_{s+t}$ if $s$,$t$ and $s+t$ belong to $I_\alpha$.	
	\end{description}
	In particular $\operatorname{Fl}^X_t$ is a diffeomorphism from $V$ onto it range and its inverse is $\operatorname{Fl}^X_{-t}$. Moreover $\operatorname{Fl}^{X_n}_t=\lambda_n\circ \operatorname{Fl}^X_t\circ \lambda_n$ is  local flow of $X_n=\lambda_n\circ X\circ \lambda_n$ and we have $\operatorname{Fl}^X_t=\underleftarrow{\lim}\operatorname{Fl}^{X_n}_t$.\\
\end{remark}

 \end{document}